\newcommand{\E}{\mathbb{E}}
\newcommand{\R}{\mathbb{R}}
\newcommand{\N}{\mathbb{N}}
\newcommand{\G}{\mathcal{G}}
\newcommand{\F}{\mathcal{F}}
\newcommand{\RR}{\mathcal{R}}
\newcommand{\al}{\overline{\alpha}}
\newtheorem{theo}{Theorem}[section]
\newtheorem{rem}[theo]{Remark}
\newtheorem{propo}[theo]{Proposition}
\newtheorem{lemma}[theo]{Lemma}
\newtheorem{as}{Assumption}
\begin{document}

\title{On parareal algorithms for semilinear parabolic Stochastic PDEs}

\author{Charles-Edouard~Br\'ehier}
\address{Univ Lyon, CNRS, Universit\'e Claude Bernard Lyon 1, UMR5208, Institut Camille Jordan, F-69622 Villeurbanne, France}
\email{brehier@math.univ-lyon1.fr}

\author{Xu Wang}
\address{
Department of Mathematics, 
Purdue University, 
150 North University Street, 
West Lafayette, Indiana 47907, USA
}
\email{wang4191@purdue.edu}

\maketitle

\begin{abstract}
Parareal algorithms are studied for semilinear parabolic stochastic partial differential equations. These algorithms proceed as two-level integrators, with fine and coarse schemes, and have been designed to achieve a ``parallel in real time'' implementation. In this work, the fine integrator is given by the exponential Euler scheme. Two choices for the coarse integrator are considered: the linear implicit Euler scheme, and the exponential Euler scheme.

The influence on the performance of the parareal algorithm, of the choice of the coarse integrator, of the regularity of the noise, and of the number of parareal iterations, is investigated, with theoretical analysis results and with extensive numerical experiments.
\end{abstract}

\section{Introduction}

In the last two decades, numerical methods for Stochastic Partial Differential Equations (SPDEs) have been extensively studied, see for instance the monographs~\cite{Jentzen_Kloeden:11},~\cite{Kruse:14},~\cite{Lord_Powell_Shardlow:14} and references therein. The rate of convergence of the schemes used for temporal and spatial discretization is related to the regularity of the noise, which may be arbitrarily low, and in such situation effective numerical methods are difficult to construct. For instance, for a one-dimensional semilinear parabolic SPDE, driven by Gaussian space-time white noise, trajectories are only $\alpha$-H\"older continuous in time and $2\alpha$-H\"older continuous in space, for $\alpha<\frac14$, and standard Euler and finite difference schemes thus have a (strong) rate of convergence equal to $\alpha$ and $2\alpha$ respectively.

In this article, we will only focus on the temporal discretization and consider a semilinear parabolic SPDE of the type
\begin{equation*}
\begin{aligned}
\frac{\partial u(t,x)}{\partial t}=&~\frac{\partial^2u(t,x)}{\partial x^2}+F(u(t,x))+\dot{W}^Q(t,x),\quad(t,x)\in\R_+\times(0,1),\\
u(0,x)=&~u_0(x),\quad x\in(0,1),\quad u(t,0)=u(t,1)=0,
\end{aligned}
\end{equation*}
that is, a one-dimensional semilinear heat equation with homogeneous Dirichlet boundary conditions, and with additive Gaussian noise, which is white in time and colored in space. A rigorous interpretation as a stochastic evolution equation (in the sense of~\cite{DaPrato_Zabczyk:14}), driven by a Wiener process, with values in an infinite dimensional Hilbert space is given by~\eqref{eq:SPDE}, see also Section~\ref{sec:2}, where precise assumptions for the nonlinear operator $f$ and the covariance operator $Q$ are stated. It is well-known that the rate of convergence (studied only in the strong sense in this article) of the error depends on the properties of the covariance operator $Q$ of the Wiener process, and can be arbitrarily small. In particular, in the case of space-time white noise ($Q$ is the identity), the order of convergence of Euler schemes is essentially $\frac14$. If the noise is of trace-class ($Q$ has finite trace), the order of convergence is essentially $\frac12$.

The objective of this article is to study the applicability of so-called parareal algorithms to improve the performance of temporal discretization schemes. Implementing parallel algorithms to solve time-dependent evolution equations is not natural, and parallel-in-time integration methods have been extensively studied, based on multigird or multiple shooting techniques. We refer to the pionneering contributions~\cite{BZ89,CP93,HV95,K94,LO87,ML67,N64}, and the review~\cite{G}. The parareal algorithm has been introduced in~\cite{LMT01}, see also~\cite{MT}. It is a {\it parallel in real time} technique, based on the use of two integrators with two different time-step sizes: a coarse integrator, denoted by $\G$, with coarse time-step size $\Delta T$, and a fine integrator, denoted by $\F$, using $J$ steps with time-step size $\delta t$, such that $\Delta T=J\delta t$. In practice, $\G$ may be less accurate but cheaper than $\F$. The parareal algorithm is an iterative method, using a predictor-corrector strategy, in which computations of the fine integrator at each iteration are performed in parallel: the recursion is given by
\begin{equation*}
\begin{aligned}
u_{n+1}^{(k+1)}&=\G(u_{n}^{(k+1)},t_n,t_{n+1})+\F(u_{n}^{(k)},t_n,t_{n+1})-\G(u_{n}^{(k)},t_n,t_{n+1}),\\
u_{0}^{(k)}&=u_0,
\end{aligned}
\end{equation*}
where $k$ represents the index for parareal iterations. For $k=0$, the scheme is initialized using the coarse integrator. We refer to Equation~\eqref{eq:parareal} and to Section~\ref{sec:3} below for a detailed presentation of the algorithm.

Since the pioneering work~\cite{LMT01}, where the parareal algorithm has been introduced for a class of ordinary differential equations, several extensions have been considered, see for instance~\cite{B05,GH08,GH14,GV07,GV07-2,MT02}. In particular,~\cite{B05} deals with parabolic PDEs, and studies the stability and convergence properties, which may require regularity properties, depending on the choice of integrators. The application of the parareal algorithm for stochastic systems has been considered first in~\cite{B06}, and more recently in~\cite{HWZ} for stochastic Schr\"odinger PDEs and in~\cite{ZZJ} for a class of stochastic differential equations. More precisely, in~\cite{HWZ}, parareal algorithms for stochastic Schr\"odinger equation with damping are studied with $\F$ being the exact solver and $\G$ being the exponential-$\theta$ scheme. The longterm convergence is obtained for the case $\theta>\frac12$ or sufficient large $\alpha$, which ensures sufficient exponential decay of the coarse integrator.

Let us now describe the contributions of this article. The parareal algorithm is applied to the SPDE above. The exponential Euler scheme is chosen as the fine integrator. The main contribution of this article is to reveal that the parareal algorithm behaves differently, depending on the choice of the coarse integrator, when applied to semilinear parabolic SPDEs. Such results, based on both theoretical analysis and numerical experiments, have not been reported before, up to our knowledge. The error of the parareal algorithm, considered in this article, is the distance between the solution computed by the proposed algorithm and a reference solution generated by the fine integrator (which is not computationally expensive and is not computed in practice). The parareal algorithm is useful to reduce computational cost only if the order of convergence of this error (with respect to the coarse time-step size $\Delta T$) is strictly larger for some $k\ge 1$ than for $k=0$.

To state the main results of this article (see Theorems~\ref{theo:error-implicit},~\ref{theo:error-exp1} and~\ref{theo:error-exp2} below for precise statements), let us assume that, for some $\alpha>0$, the covariance operator $Q$ satisfies a condition of the type
\[
\|(-A)^{\alpha-\frac12}Q^\frac12\|_{\mathcal{L}_2(H)}<\infty,
\]
where $\|\cdot\|_{\mathcal{L}_2(H)}$ denotes the Hilbert-Schmidt norm for operators from $H$ to $H$.

First, assume that the linear implicit Euler scheme is chosen as the coarse integrator, {\it i.e.} $e^{\Delta TA}$ is approximated by $(I-\Delta TA)^{-1}$. In addition, assume that $F=0$. Then the order of convergence of the parareal algorithm, with $k$ iterations, is essentially $\min(\alpha,k+1)$, and saturates at $\alpha$ when $k$ increases, see Theorem~\ref{theo:error-implicit} and numerical experiments in Section~\ref{sec:4-num}. In particular, if $\alpha\le 1$, which includes the space-time white noise case with essentially $\alpha=\frac14$, the application of the parareal algorithm is useless, since this order of convergence does not depend on $k$. The way the error behaves in terms of $k$ thus depends a lot on the regularity of the noise.

Second, assume that the exponential Euler scheme is chosen as the coarse integrator. Then it is proved that the order of convergence of the parareal algorithm, with $k$ iterations, is at least essentially of size $(k+1)\alpha$, if $\alpha\in(0,\frac12]$, see Theorem~\ref{theo:error-exp1}. Contrary to the first case, this order is linear in $k$, thus applying the parareal algorithm always reduces the computational cost, whatever the regularity of the noise. Numerical experiments, see Section~\ref{sec:5-num}, reveal that the order of convergence is sharp for $k=0$ (the parareal algorithm is not applied) and $k=1$ (one iteration is applied), but is larger when $k\ge 2$. In fact, the choice $k=1$ is optimal when considering the final computational cost. Theorem~\ref{theo:error-exp2} proves that for $\alpha\in(0,\frac14]$ and $k\ge 2$, the order of convergence is at least of size $2k\alpha$, which is indeed larger than $(k+1)\alpha$.

In conclusion, the parareal algorithm may offer an effective strategy to reduce computational cost for the simulation of trajectories of SPDEs. Several questions remain open: for instance, generalizations in higher dimension, algorithms for equations with multiplicative noise, or using other integrators, are left for future works.

This article is organized as follows. Precise assumptions on the operators $A$ and $F$, and on the covariance operator $Q$, are provided in Section~\ref{sec:2}. Section~\ref{sec:3} is devoted to introducing the parareal algorithm, to presenting the possible choices of coarse integrators, and to defining the error. The study of the behavior of the parareal algorithm when the linear implicit Euler scheme is chosen as the coarse integrator is provided in Section~\ref{sec:4}: more precisely, see Theorem~\ref{theo:error-implicit} for the theoretical error estimates and Section~\ref{sec:4-num} for the numerical experiments. The study of the behavior of the parareal algorithm when the exponential Euler scheme is chosen as the coarse integrator is provided in Section~\ref{sec:5}: more precisely, see Theorems~\ref{theo:error-exp1} and~\ref{theo:error-exp2} for the theoretical error estimates and Section~\ref{sec:5-num} for the numerical experiments.

\section{Setting}
\label{sec:2}

Let $H=L^2(0,1)$, with norm and inner product denoted by $|\cdot|$ and $\langle\cdot,\cdot\rangle$ respectively. In this work, stochastic evolution equations, with additive noise, of the following type are considered (see~\cite{DaPrato_Zabczyk:14},~\cite{LR}: given an initial condition $u(0)=u_0\in H$, 
\begin{equation}\label{eq:SPDE}
\begin{aligned}
du(t)=Au(t)dt+F(u(t))dt+dW^Q(t),
\end{aligned}
\end{equation}
where the solution $\bigl(u(t)\bigr)_{t\ge 0}$ is a stochastic process with values in $H$. The operator $A$ is a linear parabolic differential operator with homogeneous Dirichlet boundary conditions (Section~\ref{sec:ass_A}), the operator $F$ is a globally Lipschitz, non-linear operator (Section~\ref{sec:ass_F}), and $\bigl(W^Q(t)\bigr)_{t\ge 0}$ is a $Q$-Wiener process. Appropriate assumptions to justify the global well-posedness of~\eqref{eq:SPDE} are introduced below.

In the sequel, the initial condition $u_0$ is assumed to be deterministic, however the extension to a random initial condition $u_0$ (independent of the Wiener process, and satisfying appropriate moment conditions) is straightforward by a conditioning argument.

\subsection{Linear operator $A$}\label{sec:ass_A}

The linear operator $A$ is defined as the unbounded linear operator on $H=L^2(0,1)$, such that
\[
\begin{cases}
D(A)=\mathbb{H}^2(0,1)\cap \mathbb{H}_0^1(0,1),\\
Au=u'',~u\in D(A).
\end{cases}
\]
In other words, $A$ is the Laplace operator with homogeneous Dirichlet boundary conditions. Recall that $A$ is an unbounded, self-adjoint, linear operator, and that $Ae_p=-\lambda_p$ for all $p\in\N$, where the eigenvalues are given by $\lambda_p=(\pi p)^2$, and the eigenfunctions $e_p=\sqrt{2}\sin\bigl(p\pi\cdot\bigr)$ form a complete orthonormal system of $H$.

The linear operator $A$ generates an analytic and strongly continuous semigroup on $H$, denoted by $\bigl(e^{tA}\bigr)_{t\ge 0}$. Note that for all $u\in H$, one has
\[
e^{tA}u=\sum_{p\in\N}e^{-\lambda_p t}\langle u,e_p\rangle e_p.
\]

For any $\alpha\in[0,1]$ and $u\in H$, let
\[
|u|_\alpha^2=\sum_{p\in\N}\lambda_p^{2\alpha}\langle u,e_p\rangle^2\in[0,\infty]. 
\]
For $u\in D((-A)^\alpha)=\{u\in H:|u|_\alpha<\infty\}$, set $(-A)^\alpha u=\sum_{p\in\N}\lambda_p^\alpha \langle u,e_p\rangle e_p\in H$, and note that $|u|_\alpha=|(-A)^\alpha u|$. In addition, for any $\alpha\in[0,1]$ and $u\in H$, let $(-A)^{-\alpha}u=\sum_{p\in\N}\lambda_p^{-\alpha}\langle u,e_p\rangle e_p\in H$, and $|u|_{-\alpha}^2=\sum_{p\in\N}\lambda_p^{-2\alpha}\langle u,e_p\rangle^2$.

Regularization properties of the semigroup $\bigl(e^{tA}\bigr)_{t\ge 0}$ are stated in Proposition~\ref{propo:A1} below. The following notation is used. First, $\mathcal{L}(H)$ is the space of linear bounded operators from $H$ to $H$, with the operator norm denoted by $\|\cdot\|_{\mathcal{L}(H)}$. Second, $\mathcal{L}_2(H)$ is the space of Hilbert-Schmidt operators from $H$ to $H$, with the Hilbert-Schmidt norm denoted by $\|\cdot\|_{\mathcal{L}_2(H)}$.

\begin{propo}\label{propo:A1}
For all $t\ge 0$, $\|e^{tA}\|_{\mathcal{L}(H)}\le e^{-\lambda_1 t}$. Moreover, for all $\alpha\in[0,1]$, there exists $C_\alpha\in(0,\infty)$ such that for all $t\in(0,\infty)$,
\[
\|(-A)^{\alpha}e^{tA}\|_{\mathcal{L}(H)}\le C_\alpha \min(t,1)^{-\alpha},~\|(-A)^{-\alpha}(e^{tA}-I)\|_{\mathcal{L}(H)}\le C_\alpha \min(t,1)^\alpha.
\]
\end{propo}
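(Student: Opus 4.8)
The proof of Proposition~\ref{propo:A1} is a standard exercise in semigroup theory, relying entirely on the spectral decomposition of $A$. The plan is to reduce everything to scalar estimates on the eigenvalues $\lambda_p=(\pi p)^2$, using that $e^{tA}$, $(-A)^\alpha$ and their products act diagonally in the orthonormal basis $(e_p)_{p\in\N}$.

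\emph{Step 1: the contraction bound.} For the first estimate, write $|e^{tA}u|^2=\sum_{p\in\N}e^{-2\lambda_p t}\langle u,e_p\rangle^2\le e^{-2\lambda_1 t}\sum_{p\in\N}\langle u,e_p\rangle^2=e^{-2\lambda_1 t}|u|^2$, since $\lambda_p\ge\lambda_1$ for all $p$ and $t\ge 0$. Taking square roots gives $\|e^{tA}\|_{\mathcal{L}(H)}\le e^{-\lambda_1 t}$.

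\emph{Step 2: the two parametrized bounds.} Since $(-A)^\alpha e^{tA}$ is diagonal with entries $\lambda_p^\alpha e^{-\lambda_p t}$, one has $\|(-A)^\alpha e^{tA}\|_{\mathcal{L}(H)}=\sup_{p\in\N}\lambda_p^\alpha e^{-\lambda_p t}\le\sup_{\lambda\ge\lambda_1}\lambda^\alpha e^{-\lambda t}$. For $t\le 1$ one uses the elementary inequality $\lambda^\alpha e^{-\lambda t}=t^{-\alpha}(\lambda t)^\alpha e^{-\lambda t}\le t^{-\alpha}\sup_{s\ge 0}s^\alpha e^{-s}=t^{-\alpha}(\alpha/e)^\alpha$; for $t\ge 1$ one bounds $\lambda^\alpha e^{-\lambda t}\le\lambda^\alpha e^{-\lambda}\le(\alpha/e)^\alpha$, and since $\min(t,1)=1$ in that range the claimed form $C_\alpha\min(t,1)^{-\alpha}$ holds with $C_\alpha=(\alpha/e)^\alpha$ (interpreted as $1$ when $\alpha=0$). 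For the second estimate, $(-A)^{-\alpha}(e^{tA}-I)$ is diagonal with entries $\lambda_p^{-\alpha}(e^{-\lambda_p t}-1)$, so $\|(-A)^{-\alpha}(e^{tA}-I)\|_{\mathcal{L}(H)}=\sup_{p\in\N}\lambda_p^{-\alpha}(1-e^{-\lambda_p t})\le\sup_{\lambda>0}\lambda^{-\alpha}(1-e^{-\lambda t})$. Using $1-e^{-x}\le\min(1,x)\le x^\alpha$ for $x\ge 0$ and $\alpha\in[0,1]$, one gets $\lambda^{-\alpha}(1-e^{-\lambda t})\le\lambda^{-\alpha}(\lambda t)^\alpha=t^\alpha$ when $t\le 1$; and for $t\ge 1$, $\lambda^{-\alpha}(1-e^{-\lambda t})\le\lambda^{-\alpha}\cdot 1\le\lambda^{-\alpha}$ is not uniformly bounded in $\lambda\to 0$, so instead one uses $\lambda^{-\alpha}(1-e^{-\lambda t})\le\lambda^{-\alpha}\min(1,\lambda t)$, which for $\lambda t\le 1$ is $\le t^\alpha(\lambda t)^{1-\alpha}\le t^\alpha$ and for $\lambda t\ge 1$ is $\le\lambda^{-\alpha}\le t^\alpha$; since $\min(t,1)=1\le t$ here this gives the bound $C_\alpha\min(t,1)^\alpha$ with $C_\alpha=1$ after a small adjustment of the constant.

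\emph{Main obstacle.} There is no real obstacle; the only mild subtlety is organizing the case split $t\le 1$ versus $t\ge 1$ so that the single expression $\min(t,1)^{\pm\alpha}$ covers both, and choosing $C_\alpha$ uniformly in $\alpha\in[0,1]$ (one can simply take $C_\alpha=\max(1,(\alpha/e)^\alpha)=1$ throughout, since $\sup_{s\ge0}s^\alpha e^{-s}\le 1$ for $\alpha\in[0,1]$). One should also note that the same supremum estimates are what will later control the exponential Euler fine integrator, so stating them in this clean form is the point of the proposition.
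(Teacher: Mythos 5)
Your overall strategy (diagonalize in the eigenbasis and reduce to scalar suprema over the eigenvalues) is the natural one; the paper itself only quotes this proposition as a standard semigroup fact without proof, and your Step~1 and the bound on $\|(-A)^{\alpha}e^{tA}\|_{\mathcal{L}(H)}$ are correct as written.

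There is, however, a genuine flaw in the $t\ge 1$ branch of the second estimate. Once you enlarge the supremum from $\sup_{p\in\N}$ to $\sup_{\lambda>0}\lambda^{-\alpha}\bigl(1-e^{-\lambda t}\bigr)$, the quantity you are bounding is, by the substitution $x=\lambda t$, exactly $c_\alpha t^{\alpha}$ with $c_\alpha=\sup_{x>0}x^{-\alpha}(1-e^{-x})$; it grows like $t^{\alpha}$ as $t\to\infty$ whenever $\alpha>0$. So your chain of inequalities correctly yields the bound $t^{\alpha}$, but the concluding sentence ``since $\min(t,1)=1\le t$ here this gives the bound $C_\alpha\min(t,1)^{\alpha}$ after a small adjustment of the constant'' is an invalid inference: $\le t^{\alpha}$ does not imply $\le C_\alpha\min(t,1)^{\alpha}=C_\alpha$ for $t\ge 1$, and no choice of constant can repair it, because the inequality you are trying to prove is simply false for the supremum over all $\lambda>0$. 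The point you must use — and which you discarded when passing to $\lambda>0$ — is that the Dirichlet Laplacian has a spectral gap: $\lambda_p\ge\lambda_1=\pi^2$ for all $p$. With this, the $t\ge 1$ case is immediate: $\lambda_p^{-\alpha}\bigl(1-e^{-\lambda_p t}\bigr)\le\lambda_p^{-\alpha}\le\lambda_1^{-\alpha}\le 1=\min(t,1)^{\alpha}$. (Your $t\le 1$ case, giving $t^{\alpha}$, is fine.) With that one-line correction the proof is complete; note also that the same spectral gap is what makes your Step~1 bound $e^{-\lambda_1 t}$ rather than just $1$, so it is worth invoking it explicitly in both places.
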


\subsection{Nonlinear operator $F$}\label{sec:ass_F}

The analysis of the rate of convergence for parareal algorithm below proceeds in a simplified, abstract, framework, whereas numerical experiments are performed in the more natural framework of Nemytskii operators. The abstract framework does not encompass this case. Indeed, the treatment of Nemytskii would require the introduction of further concepts (such as $\gamma$-Radonifying operators, in order to work in Banach spaces $L^p(0,1)$). Instead of increasing the technical level of the presentation, the choice made for this article is to study the main features of the parareal algorithms applied to SPDEs in a more pedagogical way owing to the simplified framework.

Let us first state the assumptions on the nonlinear operator $F$ which are employed for the theoretical analysis.

\begin{as}\label{ass:F}
The nonlinear operator $F:H\to H$ is globally Lipschitz continuous, and is twice Fr\'echet differentiable, with bounded first and second order derivatives. Moreover, for any $\alpha\in(0,\frac12)$ and any arbitrarily small $\kappa\in(0,\frac12-\alpha)$, there exists $C_{F,\alpha,\kappa}\in(0,\infty)$ such that, for all $u\in D((-A)^{\alpha+\kappa})$ and $h\in H$,
\[
|DF(u).h|_{-\alpha}\le C_{F,\alpha,\kappa}\bigl(1+|u|_{\alpha+\kappa}\bigr)|h|_{-\alpha},
\]
and for all $u,h\in D((-A)^\alpha)$,
\[
|DF(u).h|_{\alpha-\kappa}\le C_{F,\alpha,\kappa}\bigl(1+|u|_\alpha\bigr)|h|_{\alpha}.
\]

Finally, for all $u_1,u_2,h\in D((-A)^{\alpha+\kappa})$,
\[
\big|\bigl(DF(u_2)-DF(u_1)\bigr).h\big|_{-\alpha}\le C_{F,\alpha,\kappa}\bigl(1+|u_1|_{\alpha+\kappa}+|u_2|_{\alpha+\kappa}\bigr)|u_2-u_1|_{-\alpha}|h|_{\alpha+\kappa}.
\]

\end{as}

Let us now recall that a Nemytskii operator $F:H\to H$ is defined such that $F(u)=f\circ u$ for all $u\in H$, where $f:\R\to\R$ is a real-valued mapping, assumed to be at least globally Lipschitz continuous. As explained above, even if $f$ is assumed of class $\mathcal{C}^2$ with bounded first and second order derivatives, the associated nonlinear Nemytskii operator $F$ does not satisfy the conditions of Assumption~\ref{ass:F}. Indeed, the appropriate generalization requires estimates in $L^p(0,1)$ spaces, for $p\in(2,\infty)$ (using H\"older inequality).

\subsection{Wiener process}\label{sec:ass_Q}

Let $\bigl(\Omega,\mathcal{F},\mathbb{P}\bigr)$ be a probability space, equipped with a filtration $\bigl(\mathcal{F}_t\bigr)_{t\ge 0}$ satisfying the usual conditions. The expectation operator is denoted by $\E[\cdot]$.

Let $\left(\bigl(\beta_p(t)\bigr)_{t\ge 0}\right)_{p\in\N}$ denote a sequence of independent standard real-valued Wiener processes, and let $\bigl(\epsilon_p\bigr)_{p\in\N}$ be a complete orthonormal system of $H$, and $\bigl(\gamma_p\bigr)_{p\in\N}$ be a sequence of nonnegative real numbers. The cylindrical Wiener process is defined as
\[
W(t)=\sum_{p\in\N}\beta_p(t)\epsilon_p.
\]
The $Q$-Wiener process is then defined as
\[
W^Q(t)=\sum_{p\in\N}\sqrt{\gamma_p}\beta_p(t)\epsilon_p,
\]
and can be written as $W^Q(t)=Q^{\frac12}W(t)$, where the linear self-adjoint operators $Q^{\frac12}$ and $Q$ satisfy
\[
Q^{\frac12}u=\sum_{p\in\N}\sqrt{\gamma_p}\langle x,\epsilon_p\rangle \epsilon_p,~Qu=\sum_{p\in\N}\gamma_p\langle x,\epsilon_p\rangle \epsilon_p,\quad\forall~ u\in H.
\]
Note that the $Q$-Wiener process $W^Q(t)$ takes values in $H$ if and only if $Q$ is a trace-class linear operator, {\it i.e.}, ${\rm Tr}(Q)=\|Q^{\frac12}\|_{\mathcal{L}_2(H)}^2=\sum_{p\in\N}\gamma_p<\infty$.

Assumption~\ref{ass:Q} states the conditions on $Q$ required to ensure the well-posedness of~\eqref{eq:SPDE}.
\begin{as}\label{ass:Q}
Assume that there exists $\alpha>0$ such that $\|(-A)^{\alpha-\frac{1}{2}}Q^{\frac12}\|_{\mathcal{L}_2(H)}<\infty$.
\end{as}
Define the parameter $\al$ as follows:
\[
\al=\sup\left\{\alpha\in(0,\infty),~\|(-A)^{\alpha-\frac{1}{2}}Q^{\frac12}\|_{\mathcal{L}_2(H)}<\infty\right\},
\]
then $\al>0$ if and only if Assumption~\ref{ass:Q} is satisfied. For instance, if $Q=I$ (space-time white noise), then $\al=\frac14$. If $Q$ is a trace-class operator, then $\al=\frac12$. In the article, we are mostly interested in the regime $\al\in(0,\frac12]$.

The numerical experiments are performed with the following example: for all $p\in\N$, $\epsilon_p=e_p$ (thus the operators $A$ and $Q$ commute), and $\gamma_p=\lambda_p^{\frac12-2\al}$ (observe that in that case the notation is consistent with the definition of $\al$ in the general case).

To conclude this section, let us introduce the following notation: if $U$ is a $H$-valued random variable, for all $\alpha\in[0,1]$, and $q\in\N$,
\[
\vvvert U\vvvert_{q}=\bigl(\E[|U|^q]\bigr)^{\frac1q},~\vvvert U\vvvert_{q,\alpha}=\bigl(\E[|U|_\alpha^q\bigr)^{\frac1q}.
\]

\subsection{Well-posedness and regularity properties}

Solutions of~\eqref{eq:SPDE} are understood in the mild sense: for all $t\ge 0$,
\begin{equation}\label{eq:SPDE_mild}
u(t)=e^{tA}u_0+\int_{0}^{t}e^{(t-s)A}F(u(s))ds+\int_0^t e^{(t-s)A}dW^Q(s).
\end{equation}

Under the assumptions stated above, this problem is globally well-posed. We quote without proof the following standard result.
\begin{propo}\label{propo:bound_u}
Let Assumption~\ref{ass:Q} be satisfied. For any initial condition $u_0\in H$, there exists a unique mild solution~\eqref{eq:SPDE_mild} of the SPDE~\eqref{eq:SPDE}. Moreover, for any $T>0$ and $q\in\N$, there exists $C_{T,q}\in(0,\infty)$ such that
\[
\underset{0\le t\le T}\sup~\vvvert u(t)\vvvert_q\le C_{T,q}\bigl(1+|u_0|\bigr).
\]

Moreover, for any $\alpha\in\bigl(0,\min(\al,\frac12)\bigr)$, there exists $C_{T,q,\alpha}\in(0,\infty)$ such that
\begin{align*}
\vvvert u(t)\vvvert_{q,\alpha} &\le C_{T,q,\alpha}\left(1+\min\bigl(|u_0|_{\alpha},t^{-\alpha}|u_0|\bigr)\right),~\forall~ t\in(0,T],\\
\vvvert u(t)-u(s)\vvvert_q &\le C_{T,q,\alpha}|t-s|^{\alpha}\left(1+\min\bigl(|u_0|_{\alpha},\min(t,s)^{-\alpha}|u_0|\bigr)\right),~\forall~ t,s\in(0,T].
\end{align*}
\end{propo}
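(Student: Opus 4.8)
The plan is to establish Proposition~\ref{propo:bound_u} by the standard mild-solution machinery, treating the three terms in~\eqref{eq:SPDE_mild} separately: the deterministic linear term $e^{tA}u_0$, the nonlinear (drift) term $\int_0^t e^{(t-s)A}F(u(s))\,ds$, and the stochastic convolution $Z(t)=\int_0^t e^{(t-s)A}dW^Q(s)$. First I would recall (or prove by the usual Picard fixed-point argument in the Banach space of $H$-valued adapted processes with $\underset{0\le t\le T}\sup\,\vvvert\cdot\vvvert_q$ norm, using the global Lipschitz continuity of $F$ from Assumption~\ref{ass:F} and Proposition~\ref{propo:A1}) the existence and uniqueness of the mild solution together with the zeroth-order moment bound $\underset{0\le t\le T}\sup\,\vvvert u(t)\vvvert_q\le C_{T,q}(1+|u_0|)$; the key input here is that the stochastic convolution has finite $q$-th moments, which follows from Assumption~\ref{ass:Q} with $\alpha=\al$ together with the factorization method (Da Prato--Kwapie\'n--Zabczyk) and Burkholder's inequality, since $\|(-A)^{-\frac12}Q^{\frac12}\|_{\mathcal{L}_2(H)}<\infty$ already gives $\vvvert Z(t)\vvvert_q<\infty$.

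Next, for the spatial-regularity bound $\vvvert u(t)\vvvert_{q,\alpha}$ with $\alpha\in(0,\min(\al,\tfrac12))$: apply $(-A)^\alpha$ to each term of~\eqref{eq:SPDE_mild}. For the linear term, $|(-A)^\alpha e^{tA}u_0|\le C_\alpha\min(t,1)^{-\alpha}|u_0|$ and also $\le|u_0|_\alpha$ directly, which yields the $\min(|u_0|_\alpha,t^{-\alpha}|u_0|)$ form. For the drift term, $\int_0^t\|(-A)^\alpha e^{(t-s)A}\|_{\mathcal{L}(H)}\,\vvvert F(u(s))\vvvert_q\,ds\lesssim\int_0^t(t-s)^{-\alpha}\,ds<\infty$ since $\alpha<1$, using the linear growth of $F$ and the already-established zeroth-order bound. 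For the stochastic convolution, $\vvvert(-A)^\alpha Z(t)\vvvert_q$ is controlled, via Burkholder and the Itô isometry in Hilbert-Schmidt norm, by $\bigl(\int_0^t\|(-A)^{\alpha-\frac12}(-A)^{\frac12}e^{(t-s)A}Q^{\frac12}\|_{\mathcal{L}_2(H)}^2\,ds\bigr)^{1/2}$; then estimating $\|(-A)^{\frac12}e^{(t-s)A}\|_{\mathcal{L}(H)}\le C(t-s)^{-1/2}$ and pulling out $\|(-A)^{\alpha-\frac12}Q^{\frac12}\|_{\mathcal{L}_2(H)}$ (finite because $\alpha<\al$) gives a bound of order $\bigl(\int_0^t(t-s)^{-1}\cdots\bigr)$ — more carefully, one splits the exponent so the time singularity is integrable, using $\alpha<\tfrac12$. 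Collecting the three contributions yields the claimed inequality.

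For the temporal Hölder bound $\vvvert u(t)-u(s)\vvvert_q\le C|t-s|^\alpha(1+\cdots)$ with $s<t$, write $u(t)-u(s)=(e^{(t-s)A}-I)e^{sA}u_0+(e^{(t-s)A}-I)\int_0^s e^{(s-r)A}F(u(r))\,dr+\int_s^t e^{(t-r)A}F(u(r))\,dr+(e^{(t-s)A}-I)Z(s)+\int_s^t e^{(t-r)A}dW^Q(r)$. The terms involving $(e^{(t-s)A}-I)$ acting on something of $\alpha$-regularity cost a factor $\|(-A)^{-\alpha}(e^{(t-s)A}-I)\|_{\mathcal{L}(H)}\le C_\alpha|t-s|^\alpha$ from Proposition~\ref{propo:A1}, combined with the spatial-regularity bounds just proved (and the $\min(|u_0|_\alpha,s^{-\alpha}|u_0|)$ alternative for the first term); the two ``increment'' integrals over $[s,t]$ contribute $O(|t-s|)$ and $O(|t-s|^{\alpha})$ respectively (the latter again via Burkholder, Itô isometry, and $\int_0^{t-s}\|(-A)^{\frac12}e^{\sigma A}(-A)^{-\frac12}Q^{\frac12}\ldots\|^2\,d\sigma$-type estimates with exponents chosen so the integral is $O(|t-s|^{2\alpha})$). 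The main obstacle is the careful bookkeeping of the fractional-power exponents in the stochastic-convolution estimates so that all time integrals remain finite precisely in the range $\alpha<\min(\al,\tfrac12)$, and making sure the two alternative bounds (in terms of $|u_0|_\alpha$ versus negative powers of $t$ or $\min(t,s)$) are tracked consistently through the Grönwall/fixed-point closure of the nonlinear term; once the linear and stochastic parts are bounded, the nonlinear part is handled routinely by Grönwall since $F$ is globally Lipschitz, so no genuine difficulty arises there.
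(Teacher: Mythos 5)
The paper quotes Proposition~\ref{propo:bound_u} without proof, as a standard result, so there is no internal argument to compare against; your sketch is the classical Da Prato--Zabczyk-type proof (Picard fixed point for well-posedness, semigroup smoothing plus It\^o isometry/factorization and Gaussianity for the stochastic convolution, and the usual increment decomposition for the temporal estimate), and it is correct in outline. The one step requiring the care you yourself flag is the bound on $\vvvert(-A)^{\alpha}Z(t)\vvvert_q$: paying the full smoothing factor $\|(-A)^{\frac12}e^{(t-s)A}\|_{\mathcal{L}(H)}\le C(t-s)^{-\frac12}$ produces a non-integrable singularity, so one must instead transfer only an exponent $\alpha'-\frac12$ onto $Q^{\frac12}$ for some $\alpha'\in(\alpha,\min(\al,\tfrac12))$, or compute the It\^o isometry exactly on the eigenbasis of $A$, which gives $\int_0^t\|(-A)^{\alpha}e^{rA}Q^{\frac12}\|_{\mathcal{L}_2(H)}^2\,dr\le\tfrac12\|(-A)^{\alpha-\frac12}Q^{\frac12}\|_{\mathcal{L}_2(H)}^2$; either route works precisely because $\alpha<\al$ strictly.
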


\section{Parareal algorithms}
\label{sec:3}

\subsection{Fine and coarse integrators}

Let $T\in(0,\infty)$ be given. Introduce the so-called coarse and fine time-step sizes $\Delta T$ and $\delta t$. It is assumed that $T=N\Delta T$ and $\Delta T=J\delta t$, where $N$ and $J$ are integers. For all $n\in\{0,\ldots,N\}$ and $j\in\{0,\ldots,J\}$, let
\[
t_n=n\Delta T,\quad t_{n,j}=t_n+j\delta t=(nJ+j)\delta t.
\]

Note that the coarse and the fine integrators introduced below are random mappings. Precisely, for all $n\in\N$, the mappings $\G_n=\G(\cdot,t_n,t_{n+1})$ and $\F_n=\F(\cdot,t_n,t_{n+1})$ depend on the Wiener increments $\bigl(W^Q(t)-W^Q(t_n)\bigr)_{t_n\le t\le t_{n+1}}$.

\subsubsection{Coarse integrator}

The coarse integrator is a numerical scheme with time-step size $\Delta T$. In this work, it is defined as follows: for all $n\in\{0,\ldots,N-1\}$ and all $u\in H$, let
\begin{equation}\label{eq:coarse}
\G(u,t_n,t_{n+1})=\hat S_{\Delta T}u+\Delta T\hat S_{\Delta T}F(u)+\hat S_{\Delta T}\bigl(W^Q(t_{n+1})-W^Q(t_n)\bigr),
\end{equation}
with
\begin{itemize}
\item either $\hat{S}_{\Delta T}=e^{\Delta TA}$ (exponential Euler scheme),
\item or $\hat{S}_{\Delta T}=S_{\Delta T}=(I-\Delta TA)^{-1}$ (linear implicit Euler scheme).
\end{itemize}

The notation $\G_{\rm expo}$, resp. $\G_{\rm imp}$, is often used below, to refer to the coarse integrator with the exponential Euler scheme, resp. with the linear implicit Euler scheme. As will be seen below, these two coarse integrators have very different behaviors when applied to the SPDEs considered in this article.

\subsubsection{Fine integrator}

The fine integrator consists of $J$ steps of a numerical scheme with time-step size $\delta t$. In this work, this numerical scheme is obtained by the exponential Euler scheme. More precisely, introduce the auxiliary integrator $\F_{\rm aux}$: for all $n\in\{0,\ldots,N-1\}$, all $j\in\{0,\ldots,J-1\}$ and all $u\in H$,
\begin{equation*}\label{eq:coarse-fine}
\F_{\rm aux}(u,t_{n,j},t_{n,j+1})=e^{\delta tA}u+\delta te^{\delta t A}F(u)+e^{\delta t A}\bigl(W^Q(t_{n,j+1})-W^Q(t_{n,j})\bigr).
\end{equation*}
The fine integrator $\F$, at the coarse time scale, is then defined as follows:
\begin{equation}\label{eq:fine}
\F(\cdot,t_n,t_{n+1})=\F_{\rm aux}(\cdot,t_{n,J-1},t_{n,J})\circ\cdots\circ\F_{\rm aux}(\cdot,t_{n,0},t_{n,1}).
\end{equation}
In other words, the solution $v_{n,J}=\F(u,t_n,t_{n+1})$. is computed using the following recursion formula
\begin{align*}
v_{n,j+1}&=e^{\delta tA}v_{n,j}+\delta te^{\delta tA}F(v_{n,j})+e^{\delta tA}\bigl(W^Q(t_{n,j+1})-W^Q(t_{n,j})\bigr)\\
v_{n,0}&=u
\end{align*}
for $j\in\{0,\cdots,J-1\}$ with $J\delta t=\Delta T$.

\subsection{The parareal algorithm}

The initialization step of the parareal algorithm consists in applying the coarse integrator: for $n\in\{0,\ldots,N-1\}$
\begin{align*}
u_{n+1}^{(0)}&=\G(u_n^{(0)},t_n,t_{n+1}),\\
u_0^{(0)}&=u_0.
\end{align*}

Let $K\in\N$ denote the number of parareal iterations. Iterations for $k\in\{0,\ldots,K-1\}$ are defined as follows: given the values $\bigl(u_m^{(k)}\bigr)_{0\le m\le N}$ at iteration $k$, then compute, for all $n\in\{0,\ldots,N-1\}$,
\begin{equation}\label{eq:parareal}
\begin{aligned}
u_{n+1}^{(k+1)}&=\G(u_{n}^{(k+1)},t_n,t_{n+1})+\F(u_{n}^{(k)},t_n,t_{n+1})-\G(u_{n}^{(k)},t_n,t_{n+1}),\\
u_{0}^{(k)}&=u_0.
\end{aligned}
\end{equation}

The core of the approach lies in the ability, at each iteration in $k$, to perform in parallel the computations in~\eqref{eq:parareal} for different values of $n$, hence the terminology of ``parareal algorithms'' for ``parallelization in real time''.

\subsection{The reference solution}

The reference solution is defined using the fine integrator:
\begin{equation}\label{eq:ref}
\begin{aligned}
u_{n+1}^{\rm ref}&=\F(u_n^{\rm ref},t_n,t_{n+1}),\\
u_0^{\rm ref}&=u_0.
\end{aligned}
\end{equation}
Observe that following~\eqref{eq:fine}, this reference solution is in fact defined in terms of the integrator $\F_{\rm aux}$, applied with the fine time-step size $\delta t$. Precisely, for all $n\in\{0,\ldots,N-1\}$, one has $u_{n+1}^{\rm ref}=v_{n,J}^{\rm ref}$ defined by
\begin{align*}
v_{n,j+1}^{\rm ref}&=\F_{\rm aux}\bigl(v_{n,j}^{\rm ref},t_{n,j},t_{n,j+1}\bigr),~j\in\{0,\ldots,J-1\}\\
v_{n,0}^{\rm ref}&=u_n^{\rm ref}.
\end{align*}
This may be rewritten as follows: $u_{n}^{\rm ref}=v_{nJ}^{\rm ref}$, where for all $\ell\in\{0,\ldots,NJ-1\}$,
\[
v_{\ell+1}^{\rm ref}=\F_{\rm aux}\bigl(v_{\ell}^{\rm ref},\ell\delta t,(\ell+1)\delta t\bigr).
\]
In addition, note that
\[
v_{\ell}^{\rm ref}=e^{\ell\delta tA}u_0+\delta t\sum_{l=0}^{\ell-1}e^{(l-\ell)\delta tA}F(v_{l}^{\rm ref})+\sum_{l=0}^{\ell-1}e^{(l-\ell)\delta tA}\bigl(W^Q((l+1)\delta t)-W^Q(l\delta t)\bigr).
\]

To conclude this section, we state without proof two standard results concerning, first, the qualitative properties of the reference solution (moment estimates), second, the rate of convergence of the error $u_n^{\rm ref}-u(t_n)$.

\begin{propo}\label{propo:bound_ref}
Let $T>0$, $\alpha\in\bigl(0,\min(\al,\frac12)\bigr)$ and $q\in\N$. There exists $C_{T,q,\alpha}\in(0,\infty)$ such that for all $u_0\in D((-A)^\alpha)$,
\[
\underset{0\le n\le N-1}{\sup}\vvvert u_n^{\rm ref}\vvvert_{q,\alpha}\le C_{T,q,\alpha}\bigl(1+|u_0|_{\alpha}\bigr),
\]
and such that the following error estimate holds true:
\[
\underset{0\le n\le N-1}{\sup}\vvvert u_n^{\rm ref}-u(t_n)\vvvert_{q}\le C_{T,q,\alpha}\delta T^{\alpha}\bigl(1+|u_0|_{\alpha}\bigr).
\]
\end{propo}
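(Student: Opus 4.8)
The plan is to prove Proposition~\ref{propo:bound_ref} by treating the three quantities (moment bound in the $\alpha$-norm, and the convergence rate) by standard stochastic-convolution estimates combined with a discrete Gronwall argument. I would work throughout with the explicit representation
\[
v_{\ell}^{\rm ref}=e^{\ell\delta tA}u_0+\delta t\sum_{l=0}^{\ell-1}e^{(l-\ell)\delta tA}F(v_{l}^{\rm ref})+\sum_{l=0}^{\ell-1}e^{(l-\ell)\delta tA}\bigl(W^Q((l+1)\delta t)-W^Q(l\delta t)\bigr)
\]
given just above the statement, and only at the end restrict to indices of the form $\ell=nJ$.

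First I would estimate the discrete stochastic convolution $Z_\ell^{\delta t}:=\sum_{l=0}^{\ell-1}e^{(l-\ell)\delta tA}\bigl(W^Q((l+1)\delta t)-W^Q(l\delta t)\bigr)$. Since the summands are independent Gaussian increments, $\E[|Z_\ell^{\delta t}|_\alpha^2]=\delta t\sum_{l=1}^{\ell}\|(-A)^\alpha e^{-l\delta tA}Q^{1/2}\|_{\mathcal L_2(H)}^2$; bounding $\|(-A)^{1/2}e^{-l\delta tA}\|_{\mathcal L(H)}\le C(l\delta t)^{-1/2}$ via Proposition~\ref{propo:A1} and using $\|(-A)^{\alpha-1/2}Q^{1/2}\|_{\mathcal L_2(H)}<\infty$ (Assumption~\ref{ass:Q}, valid since $\alpha<\al$) gives, after comparing the Riemann sum $\delta t\sum_l (l\delta t)^{-1}$ with $\int_0^T s^{-1}\,ds$—which diverges, so instead one keeps the $\mathcal L_2$ factor at level $\alpha-1/2$ and only loses $\delta t\sum_l (l\delta t)^{-2\alpha}\le C_T$ when $2\alpha<1$—a uniform-in-$\ell$ bound $\E[|Z_\ell^{\delta t}|_\alpha^2]\le C_{T,\alpha}$. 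Gaussianity upgrades the second moment to all $q$-th moments. For the deterministic parts one uses $|e^{\ell\delta tA}u_0|_\alpha\le|u_0|_\alpha$ and, for the alternative $t^{-\alpha}|u_0|$ bound, $\|(-A)^\alpha e^{\ell\delta tA}\|_{\mathcal L(H)}\le C_\alpha(\ell\delta t)^{-\alpha}$; the drift term is controlled by the Lipschitz (hence linear growth) bound on $F$ together with $\|(-A)^\alpha e^{(l-\ell)\delta tA}\|_{\mathcal L(H)}\le C_\alpha((\ell-l)\delta t)^{-\alpha}$, whose discrete sum $\delta t\sum_l((\ell-l)\delta t)^{-\alpha}$ is bounded by $C_T$ since $\alpha<1$. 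This leaves a term $C\delta t\sum_l \vvvert v_l^{\rm ref}\vvvert_{q,\alpha}$ up to a singular but summable weight, so a discrete Gronwall inequality (with a weakly-singular kernel, i.e.\ a discrete Henry--Gronwall lemma) closes the bound $\sup_\ell\vvvert v_\ell^{\rm ref}\vvvert_{q,\alpha}\le C_{T,q,\alpha}(1+|u_0|_\alpha)$, and restricting to $\ell=nJ$ yields the first assertion.

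Next I would prove the convergence estimate $\vvvert u_n^{\rm ref}-u(t_n)\vvvert_q\le C\delta t^\alpha(1+|u_0|_\alpha)$ (note the statement writes $\delta T$, which I read as $\delta t$, the fine step, since $u_n^{\rm ref}$ is built from $\F_{\rm aux}$). Subtracting the mild formula~\eqref{eq:SPDE_mild} at $t=\ell\delta t$ from the representation of $v_\ell^{\rm ref}$, the errors split into: (i) the stochastic term $\sum_l\int_{l\delta t}^{(l+1)\delta t}\bigl(e^{(l\delta t-\ell\delta t)A}-e^{(s-\ell\delta t)A}\bigr)dW^Q(s)$, whose $\mathcal L_2$-norm on each subinterval is controlled using $\|(-A)^{-\alpha}(e^{\tau A}-I)\|_{\mathcal L(H)}\le C_\alpha\tau^\alpha$ from Proposition~\ref{propo:A1}, combined with $\|(-A)^{\alpha-1/2}Q^{1/2}\|_{\mathcal L_2(H)}<\infty$, yielding the $O(\delta t^\alpha)$ rate after summing; (ii) the analogous deterministic drift discrepancy, handled the same way plus the time-regularity of $u$ from Proposition~\ref{propo:bound_u} (Hölder-$\alpha$ in time, with the $|u_0|_\alpha$-type bound); (iii) a Lipschitz term $\delta t\sum_l e^{(l-\ell)\delta tA}\bigl(F(v_l^{\rm ref})-F(u(l\delta t))\bigr)$ giving $C\delta t\sum_l\vvvert v_l^{\rm ref}-u(l\delta t)\vvvert_q$. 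A standard discrete Gronwall argument then gives the claimed rate at $\ell=nJ$, i.e.\ $t_n=nJ\delta t$.

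The main obstacle is the sharp treatment of the weakly singular convolution sums in the $\alpha$-norm estimate—one must resist the temptation to put $(-A)^\alpha$ entirely on the semigroup (which would produce the non-integrable weight $s^{-1}$ when $2\alpha$ approaches $1$) and instead carefully distribute regularity between $(-A)^{\alpha-1/2}e^{\cdot A}Q^{1/2}$ in Hilbert--Schmidt norm and a residual $(-A)^{1/2}e^{\cdot A}$ in operator norm, so that the Riemann sum stays bounded precisely because $\alpha<\al\le\frac12$. The same care is needed in step (i) of the convergence proof. Everything else is routine: the moment bounds from Proposition~\ref{propo:bound_u}, the Lipschitz and growth properties of $F$, Gaussian hypercontractivity to pass from $q=2$ to general $q$, and the discrete Gronwall lemma; since the proposition is quoted without proof, I would in fact only sketch these steps and refer to the standard references~\cite{Jentzen_Kloeden:11},~\cite{Kruse:14},~\cite{Lord_Powell_Shardlow:14}.
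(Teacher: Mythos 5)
The paper itself states Proposition~\ref{propo:bound_ref} explicitly \emph{without proof}, as a standard result, so there is no in-paper argument to compare against; your sketch must stand on its own. Structurally it follows the standard route one would expect (It\^o isometry for the discrete stochastic convolution, smoothing estimates from Proposition~\ref{propo:A1}, Lipschitz/linear growth of $F$ plus a discrete Gronwall lemma, Gaussianity for the higher moments, and, for the error bound, the interval-by-interval comparison of $e^{(\ell-l)\delta tA}$ with $e^{(\ell\delta t-s)A}$), and your reading of ``$\delta T$'' as the fine step $\delta t$ is the sensible one.

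However, the exponent bookkeeping at the step you yourself identify as the crux does not close as written. To bound $\delta t\sum_{l}\|(-A)^{\alpha}e^{l\delta tA}Q^{1/2}\|_{\mathcal{L}_2(H)}^2$ you must factor $(-A)^{\alpha}e^{sA}Q^{1/2}=\bigl[(-A)^{\theta}e^{sA}\bigr]\,\bigl[(-A)^{\alpha-\theta}Q^{1/2}\bigr]$: the Hilbert--Schmidt factor is finite when $\alpha-\theta<\al-\tfrac12$, i.e. $\theta>\alpha-\al+\tfrac12$, while the Riemann sum $\delta t\sum_l(l\delta t)^{-2\theta}$ is bounded uniformly in $\delta t$ only when $\theta<\tfrac12$. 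Your proposed fix, ``keep the $\mathcal{L}_2$ factor at level $\alpha-\tfrac12$'', forces $\theta=\tfrac12$ and reproduces exactly the divergent weight $(l\delta t)^{-1}$ you had just discarded; and the weight $(l\delta t)^{-2\alpha}$ you claim to ``only lose'' corresponds to $\theta=\alpha$, hence to a Hilbert--Schmidt factor at level $0$, which requires trace-class noise and fails precisely in the interesting regime $\al<\tfrac12$. The correct (and standard) repair uses the strict inequality $\alpha<\min(\al,\tfrac12)$: choose $\theta=\tfrac12-\kappa$ with $0<\kappa<\al-\alpha$, so the Hilbert--Schmidt factor sits at level $\alpha-\tfrac12+\kappa<\al-\tfrac12$ and the weight $(l\delta t)^{-1+2\kappa}$ is summable, with no loss in the final exponent because the extra room is spent on $Q$, not on the power of $\delta t$. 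The same adjustment is needed in part (i) of your convergence estimate, where your splitting leaves a residual $(-A)^{1/2}e^{(\ell\delta t-s)A}$ and hence a logarithmic factor that would spoil the clean rate $\delta t^{\alpha}$ claimed in the statement. Two minor further corrections: $\al$ need not satisfy $\al\le\tfrac12$ (the paper's experiments use $\al=4,6$), only $\alpha<\min(\al,\tfrac12)$ enters; and in the Gronwall step for the $\alpha$-norm bound the drift term naturally produces $\delta t\sum_l((\ell-l)\delta t)^{-\alpha}(1+|v_l^{\rm ref}|)$ with the $H$-norm, so you may first close the $H$-norm moment bound by a plain Gronwall argument and then obtain the $\alpha$-norm bound directly, avoiding the weakly singular Gronwall lemma you invoke.
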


\begin{rem}
The assumption that $u_0\in D((-A)^\alpha)$ may be weakened using Proposition~\ref{propo:A1}.
\end{rem}

\begin{rem}
Since noise is additive in the SPDE~\eqref{eq:SPDE}, the order of convergence in Proposition~\ref{propo:bound_ref} may be larger than $\frac12$ when $\al$ is sufficiently large. This type of estimate is not considered in this article.
\end{rem}

\subsection{Error and residual operators}

In the implementation of parareal algorithms, the reference solution $u_n^{\rm ref}$ defined above is not computed in practice. Instead, the quantity $u_n^{(k)}$ defined in~\eqref{eq:parareal} is computed. To estimate the error between $u_n^{(k)}$ and $u(t_n)$, due to Proposition~\ref{propo:bound_ref}, it is sufficient to study the error between $u_n^{(k)}$ and $u_n^{\rm ref}$. Observe that the parareal solution $u_n^{(k)}$ may be computed with a lower computational cost than the reference solution $u_n^{\rm ref}$, using parallel computations in~\eqref{eq:parareal} (expect for the initialization). Error estimates are required to determine the choice of time-step sizes $\Delta T$ and $\delta t$, and of the number of parareal iterations $K$, to achieve a given error criterion, with minimal computational cost.

For all $n\in\{0,\ldots,N\}$ and $k\in\{0,\ldots,K\}$ (where $K$ is the number of parareal iterations), let the error be defined by
\begin{equation}\label{eq:error_def}
\epsilon_n^{(k)}:=u_n^{(k)}-u_n^{\rm ref}.
\end{equation}
Note that $\epsilon_0^{(k)}=0$ for all $k\in\N_0$.

Moreover, by construction, one get $\epsilon_n^{(k)}=0$ for all $n\le k$, which indicates that the numerical solution $\{u_n^{(k)}\}_{n=0,\cdots,N}$ will definitely converge to the reference solution $\{u_n^{\rm ref}\}_{n=0,\cdots,N}$, if the iterated number $K$ is sufficiently large, {\it i.e.} $K\ge N$.
However, to get a speedup, in practice, the iterated number $K$ will be chosen significantly smaller than $N$, which is further discussed in Section \ref{sec:cost}.

It is convenient to introduce the residual operators defined by
\begin{equation}\label{eq:residual_def}
\RR_n(u):=\F_n(u)-\G_n(u),
\end{equation}
for all $n\in\{0,\ldots,N-1\}$, where the notation $\F_n(u):=\F(u,t_n,t_{n+1})$ and $\G_n(u):=\G(u,t_n,t_{n+1})$ is used.

Then the error defined by~\eqref{eq:error_def} satisfies the recursion formula, where the residual operators $\RR_n$ defined by~\eqref{eq:residual_def} appear:
\begin{equation}\label{eq:error}
\begin{aligned}
\epsilon_{n+1}^{(k+1)}&=\G_n(u_{n}^{(k+1)})+\F_n(u_{n}^{(k)})-\G_n(u_{n}^{(k)})-\F_n(u^{\rm ref}_{n})\\
&=\G_n(u_{n}^{(k+1)})-\G_n(u_{n}^{\rm ref})+\RR_n(u_{n}^{(k)})-\RR_n(u_{n}^{\rm ref})\\
&=\hat{S}_{\Delta T}\epsilon_{n}^{(k+1)}+\Delta T\hat{S}_{\Delta T}\left[F(u_{n}^{(k+1)})-F(u^{\rm ref}_{n})\right]+\RR_{n}(u_{n}^{(k)})-\RR_{n}(u_{n}^{\rm ref}),
\end{aligned}
\end{equation}
where the linear operator $\hat S_{\Delta T}$ depends on the choice of the coarse integrator, see~\eqref{eq:coarse}.

Up to this point, the choice of the coarse integrator plays no role in the presentation. The major finding of this article is that the behavior of the parareal algorithm applied for SPDEs~\eqref{eq:SPDE} differs when choosing the exponential Euler scheme or the linear implicit Euler scheme as the coarse integrator. Indeed, the theoretical results and the numerical experiments reveal that, as the number of parareal iterations $k$ increases, the evolution of the order of convergence of the error $\epsilon_n^{(k)}$ has a different behavior depending on the choice of coarse integrator.

\subsection{Analysis of the computational cost}\label{sec:cost}

The objective of this section is to compare the costs for computing $u_n^{(k)}$ using the parareal algorithm~\ref{eq:parareal}, and for computing the reference solution $u_n^{\rm ref}$.

The computational advantage of using the parareal algorithm is due to the possibility to compute the quantities $u_{n}^{(k+1)}$ in parallel, for fixed $k\ge 0$, see~\eqref{eq:parareal}. Let $N_{\rm proc}$ denote the number of available processors.

Let $T\in(0,\infty)$ denote the final time, and consider $n=N$ such that $N\Delta T=T$. Denote by $\tau_{\G}$ the computational time for one evaluation of $\G(u,t_n,t_{n+1})$. It is assumed that $\tau_{\G}$ does not depend on $\Delta T$, $n\in\N_0$ and on $u\in H$. Denote also by $\tau_{\F,{\rm aux}}$ denote the computational time for one realization of $\F_{\rm aux}(u,t_{n,j},t_{n,j+1})$. Then the computational time for one realization of $\F(\cdot,t_n,t_{n+1})$, denoted by $\tau_{\F}$, is 
\[
\tau_{\F}=J\tau_{\F,{\rm aux}}=\frac{\Delta T \tau_{\F,{\rm aux}}}{\delta t}.
\]
It is also assumed that $\tau_{\F,{\rm aux}}(u,t_{n,j},t_{n,j+1})$ does not depend on $\delta t$, $n,j$ and on $u$.

\subsubsection{Parareal algorithm}

For the initialization step, the computational cost is equal to $N\tau_{\G}$, since at this stage no parallelization procedure is applied.

For each iteration of the algorithm, observe that in~\eqref{eq:parareal}, the third term $\G(u_n^{(k)},t_n,t_{n+1})$ has already been computed at the previous iteration, and that the values of the second term $\F(u_n^{(k)},t_n,t_{n+1})$ may be computed in parallel. A sequential computation remains to be done, thus the computational cost of one iteration of the parareal algorithm is
\[
N\bigl(\tau_{\G}+\frac{\tau_{\F}}{N_{\rm proc}}\bigr).
\]
If $K$ iterations of the parareal algorithm are performed, the associated computational cost is thus equal to 
\[
{\rm Cost}^{\rm parareal}=(K+1)\frac{T}{\Delta T}\tau_{\G}+K\frac{T}{\delta t}\frac{\tau_{\F,{\rm aux}}}{N_{\rm proc}}.
\]

\subsubsection{Reference solution}

The reference solution $u_n^{\rm ref}$ is computed using the fine integrator $\F_{\rm aux}$ with time-step size $\delta t$, see~\eqref{eq:ref}. The associated computational cost is equal to
\[
{\rm Cost}^{\rm ref}=\frac{T}{\Delta T}\tau_{\F}=\frac{T}{\delta t}\tau_{\F,{\rm aux}}.
\]

\subsubsection{Efficiency}

The efficiency of the parareal algorithm, compared with a direct simulation using the fine integrator only, is thus studied in terms of the ratio
\[
\mathcal{E}=\frac{{\rm Cost}^{\rm ref}}{{\rm Cost}^{\rm parareal}}=\frac1{\frac{K}{N_{\rm proc}}+(K+1)\frac{\delta t}{\Delta T}\frac{\tau_{\G}}{\tau_{\F,\rm aux}}}.
\]

Note that since the efficiency never goes to infinity as $\Delta T$ goes to $0$, the parareal algorithm does not improve the rate of convergence with respect to the time-step size. Instead, the parareal algorithm may improve the computational efficiency. To go further in the analysis of the efficiency of the parareal algorithm, it is essential to study the rate of convergence of the error $\epsilon_n^{(k)}$ in terms of $\Delta T$.

On the one hand, assume that the rate of convergence does not depend on $k$. Then, to balance the errors $u_n^{(k)}-u_n^{\rm ref}$ and $u_n^{\rm ref}-u(n\Delta T)$, it is necessary to choose $\Delta T$ and $\delta t$ of the same size, hence
\[
\mathcal{E}=\frac{{\rm Cost}^{\rm ref}}{{\rm Cost}^{\rm parareal}}=\frac1{\frac{K}{N_{\rm proc}}+C(K+1)\frac{\tau_{\G}}{\tau_{\F,\rm aux}}}.
\]
To maximize the efficiency $\mathcal{E}$ above, the optimal choice is apparently $K=0$: parareal iterations increase the computational cost (linearly in $K$), in spite of the use of parallelization.

On the other hand, assume that the rate of convergence of $\epsilon_n^{(k)}$ is strictly larger than the rate for $\epsilon_{n}^{(0)}$. Then the time-step sizes are chosen such that $\delta t={\rm o}(\Delta T)$ as $\Delta T\to 0$, and the efficiency is then of size $\frac{N_{\rm proc}}{K}$. This means that parallelization ($N_{\rm proc}\ge 2$) reduces the cost, and that the optimal choice is $K=1$.

In Sections~\ref{sec:4} and~\ref{sec:5}, the rates of convergence of the error $\epsilon_n^{(k)}$ with respect to $\Delta T$ are studied, depending on the choice of the coarse integrator, and on the regularity of the noise.

\section{Linear-implicit Euler scheme as the coarse integrator}
\label{sec:4}

The objective of this section is to prove that, when the coarse integrator is chosen as the linear implicit Euler scheme, {\it i.e.} $\G=\G_{\rm imp}$ with $\hat S_{\Delta T}=S_{\Delta T}$, then the behavior of the parareal algorithm depends a lot on the regularity of the noise. More precisely, it is proved that the order of convergence of the error $\epsilon_n^{(k)}$ to $0$ cannot exceed $\al$, and in particular saturates when $k$ increases. Essentially, the order of convergence (in the framework studied below) is equal to $\min(\al,k+1)$. The theoretical and numerical results are consistent, and show that the rates obtained by the theoretical analysis are sharp.

For the theoretical analysis developped in this section, the framework is as follows. First, the initial condition $u_0=0$ and the nonlinear operator $F=0$ are set equal to $0$. Second, the covariance $Q$ of the noise commutes with $A$, {\it i.e.} $Qe_p=\gamma_pe_p$, for all $p\in\N$, and eigenvalues satisfy $\gamma_p=\lambda_p^{\frac12-2\al}$.

In this case, the solution is a Gaussian process, and $u(t)=\int_{0}^{t}e^{(t-s)A}dW^Q(s)$. Moreover, the recursion formula~\eqref{eq:error} for the error yields the equalities
\[
\epsilon_{n}^{(k+1)}
=S_{\Delta T}\epsilon_{n-1}^{(k+1)}+\bigl(e^{\Delta TA}-S_{\Delta T}\bigr)\epsilon_{n-1}^{(k)}
=\sum_{m=0}^{n-1}S_{\Delta T}^{n-1-m}\bigl(e^{\Delta TA}-S_{\Delta T}\bigr)\epsilon_{m}^{(k)}.
\]

\subsection{Theoretical error estimates}
\label{sec:4-error}

The main theoretical result of this section is Theorem~\ref{theo:error-implicit}.
\begin{theo}\label{theo:error-implicit}
Assume that $\gamma_p=\lambda_p^{\frac12-2\al}$, with $\al>0$. Let $T\in(0,\infty)$ and $k\in\N_0$.
\begin{itemize}
\item If $k+1<\al$, then there exists $C_{T,k,\al}\in(0,\infty)$ such that
\[
\sup_{n\Delta T\le T}\vvvert \epsilon_n^{(k)}\vvvert_2\le C_{T,k,\al}\Delta T^{k+1}.
\]
\item If $k+1\ge \al$, then for all $\alpha\in(0,\al)$, there exists $C_{T,k,\alpha}\in(0,\infty)$ such that
\[
\sup_{n\Delta T\le T}\vvvert \epsilon_n^{(k)}\vvvert_2\le C_{T,k,\alpha}\Delta T^{\alpha}.
\]
\end{itemize}
\end{theo}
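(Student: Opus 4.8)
The plan is to exploit the explicit Gaussian structure available when $F=0$ and $u_0=0$, together with the closed recursion for the error displayed just before the statement, namely
\[
\epsilon_n^{(k+1)}=\sum_{m=0}^{n-1}S_{\Delta T}^{\,n-1-m}\bigl(e^{\Delta TA}-S_{\Delta T}\bigr)\epsilon_m^{(k)}.
\]
Since everything is linear and Gaussian, and the covariance $Q$ commutes with $A$, I would diagonalize: write each $\epsilon_n^{(k)}$ in the orthonormal basis $(e_p)_{p\in\N}$ and track the scalar coefficients mode by mode. On mode $p$, the semigroup acts by $e^{-\lambda_p\Delta T}$, the implicit Euler operator by $s_p:=(1+\lambda_p\Delta T)^{-1}$, and the one-step residual by the scalar $r_p:=e^{-\lambda_p\Delta T}-s_p$. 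Unrolling the double recursion in $k$ and $n$ from the base case $\epsilon_n^{(0)}=u_n^{(0)}-u_n^{\rm ref}$ (the coarse-vs-fine discretization error of the linear SPDE), one obtains, mode by mode, an expression of the form $\epsilon_n^{(k)}=\sum (\text{products of } s_p \text{ and } r_p)\cdot(\text{Wiener increments})$, hence a Gaussian random variable whose $L^2$-norm squared is $\sum_p \gamma_p\,\delta t \cdot (\text{explicit polynomial-in-}s_p,r_p \text{ coefficient})^2$ summed over fine steps, using independence of the increments.

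The key analytic input is a sharp bound on the residual scalar: for $x=\lambda_p\Delta T\ge 0$,
\[
\bigl|e^{-x}-(1+x)^{-1}\bigr|\le C\min(1,x)^{2}\,e^{-cx}\quad\text{(equivalently, }\lesssim x^2/(1+x)^2\text{)},
\]
which gives two regimes: $|r_p|\lesssim (\lambda_p\Delta T)^2$ for low modes $\lambda_p\Delta T\le 1$ and $|r_p|\lesssim e^{-c\lambda_p\Delta T}$ (in fact $|r_p|\le e^{-\lambda_p\Delta T}+s_p$, both exponentially/algebraically small) for high modes. After $k$ parareal iterations the coefficient of each surviving Wiener increment carries a factor $r_p^{\,k+1}$ from the initialization error being hit by $k$ residual corrections — more precisely, $\epsilon_n^{(0)}$ already contains one factor of the discretization discrepancy, and each application of $e^{\Delta TA}-S_{\Delta T}$ in the recursion contributes another $r_p$. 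So the low-mode contribution to $\vvvert\epsilon_n^{(k)}\vvvert_2^2$ is controlled by $\sum_p \gamma_p \lambda_p^{-1}(\lambda_p\Delta T)^{2(k+1)}$-type sums (with the geometric sums over $n\le T/\Delta T$ producing harmless powers of $\Delta T^{-1}$ and constants depending on $T,k$), and the high-mode contribution by $\sum_p\gamma_p\lambda_p^{-1}e^{-c\lambda_p\Delta T}$-type sums. Plugging $\gamma_p=\lambda_p^{1/2-2\al}$ and $\lambda_p\asymp p^2$, the low-mode sum behaves like $\Delta T^{2(k+1)}\sum_{\lambda_p\le \Delta T^{-1}}\lambda_p^{-1/2-2\al+2(k+1)}$, whose leading behavior is $\Delta T^{2(k+1)}\cdot\Delta T^{-(2(k+1)-2\al)} = \Delta T^{2\al}$ when $2(k+1)-2\al > 1/2$, i.e. it saturates at the exponent $\al$ (losing an arbitrarily small $\kappa$ in the borderline counting), whereas when $k+1<\al$ the exponent $2(k+1)$ wins and one gets $\Delta T^{k+1}$ for the $L^2$-norm. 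The high-mode tail is always $O(\Delta T^{2\al})$ (indeed $O(\Delta T^{2\al-})$) by the same $p^2$-counting against the exponential, so it never dominates.

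The execution I would follow is: (i) diagonalize and write the scalar recursion; (ii) solve it explicitly to express $\epsilon_n^{(k)}$ per mode as a finite sum over fine increments with coefficients that are products of $s_p^{\,\bullet}$ and exactly $(k+1)$ copies of $r_p$ (care is needed here to get the combinatorial structure right — this is the step that also explains the observed $\epsilon_n^{(k)}=0$ for $n\le k$); (iii) use independence to get $\vvvert\epsilon_n^{(k)}\vvvert_2^2=\delta t\sum_p\gamma_p(\cdots)$; (iv) bound the per-mode coefficient using the residual estimate above and $0\le s_p\le 1$, together with $\sum_{m}$ and $\sum_{\text{fine steps}}$ being $O(\Delta T^{-1})$ geometric-type sums with $T$-dependent constants; (v) split into $\lambda_p\Delta T\le 1$ and $\lambda_p\Delta T>1$ and evaluate the resulting $p$-series with $\gamma_p=\lambda_p^{1/2-2\al}$, extracting the exponent $\min(k+1,\al)$ (with the $\alpha<\al$ relaxation absorbing the endpoint); (vi) take supremum over $n$ with $n\Delta T\le T$. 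The main obstacle is step (ii)/(iv): getting the exact power of $r_p$ and a clean, $n$-uniform bound on the coefficient of each Wiener increment after the double unrolling, since naive triangle-inequality bounds on the $k$-fold convolution $\sum_{m}S_{\Delta T}^{n-1-m}(e^{\Delta TA}-S_{\Delta T})(\cdots)$ can lose a factor $n^k$; controlling this requires using that $s_p<1$ strictly (so the inner geometric sums converge or are bounded by $C/(1-s_p)=C(1+1/(\lambda_p\Delta T))$ in a way that still combines correctly with the $r_p^{k+1}$ factor and the $\gamma_p$ weight). Once that bookkeeping is under control, the two stated cases follow directly from the two-regime evaluation of the mode sum.
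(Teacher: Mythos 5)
Your proposal follows essentially the same route as the paper's proof: diagonalization in the eigenbasis of $A$, the scalar residual bound $|e^{-x}-(1+x)^{-1}|\le 1\wedge x^2$, geometric sums bounded by $(1+\lambda_p\Delta T)/(\lambda_p\Delta T)$ so that each parareal iteration contributes a factor $1\wedge(\lambda_p\Delta T)$ with no loss of a factor $n^k$, an It\^o-isometry estimate for the initialization error $\epsilon_n^{(0)}$, and the final evaluation of the mode sum $\sum_p\gamma_p\lambda_p^{-1}\bigl(1\wedge\lambda_p\Delta T\bigr)^{2(k+1)}$ split at $\lambda_p\Delta T=1$ with $\gamma_p=\lambda_p^{\frac12-2\al}$, yielding the dichotomy $k+1<\al$ versus $k+1\ge\al$. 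The only organizational difference is that the paper avoids the explicit unrolling you flag as the main obstacle by iterating a sup-over-$n$ bound in $k$ (bounding the geometric sum by the full series at each level), which is exactly the remedy you anticipate, so the argument goes through as you describe.
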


In particular, for $\al=\frac14$ ($Q=I$, space-time white noise), or $\al=\frac12$ ($Q$ is trace-class), the rate of convergence does not depend on $k$, and performing parareal iterations does not increase the order of convergence, see Section~\ref{sec:cost}.

\begin{proof}

For all $p\in\N$, let $\epsilon_{n}^{(k)}(p)=\langle \epsilon_n^{(k)},e_p\rangle$ denote the $p$-th component of the error $\epsilon_n^{(k)}$. Then the expression above is rewritten as
\[
\epsilon_n^{(k+1)}(p)=\sum_{m=0}^{n-1}V(-\lambda_p\Delta T)^{n-1-m}R(-\lambda_p\Delta T)\epsilon_{m}^{(k)}(p),
\]
where, for $z\in(-\infty,0]$, one has $V(z):=\frac{1}{1-z}$ and $R(z):=e^z-\frac{1}{1-z}$. The inequality $|R(z)|\le 1\wedge |z|^2$ yields
\[
\bigl(\E|\epsilon_{n}^{(k+1)}(p)|^2\bigr)^{\frac12} \le C\bigl(1\wedge (\lambda_p\Delta T)^2\bigr)\left(\sum_{m=0}^{n-1}\frac{1}{(1+\lambda_p\Delta T)^{n-1-m}}\right)\underset{0\le m\le n}{\sup}\bigl(\E|\epsilon_{n}^{(k)}(p)|^2\bigr)^{\frac12}.
\]
Observe that
\[
\sum_{m=0}^{n-1}\frac{1}{(1+\lambda_p\Delta T)^{n-1-m}}\le \sum_{m=0}^{\infty}\frac{1}{(1+\lambda_p\Delta T)^{m}}=\frac{1+\lambda_p\Delta T}{\lambda_p\Delta T}.
\]
Thus for all $n$ and all $k$, one obtains (using a recursion argument)
\begin{align*}
\bigl(\E|\epsilon_{n}^{(k)}(p)|^2\bigr)^{\frac12}
\le& C\bigl(1\wedge (\lambda_p\Delta T)\bigr)\underset{0\le m\le n}{\sup}\bigl(\E|\epsilon_{m}^{(k-1)}(p)|^2\bigr)^{\frac12}\\
\le& C^k\bigl(1\wedge (\lambda_p\Delta T)\bigr)^k\underset{0\le m\le n}{\sup}\bigl(\E|\epsilon_{m}^{(0)}(p)|^2\bigr)^{\frac12}.
\end{align*}
It remains to study the error $\epsilon_n^{(0)}$ at the initialization step. One has the identity
\[
\epsilon_n^{(0)}=\sum_{m=0}^{n-1}\left(S_{\Delta t}^{n-m}-S((n-m)\Delta T)\right)\bigl(W^Q((m+1)\Delta T)-W^Q(m\Delta T)\bigr).
\]
Let us prove the following claim: there exists $C\in(0,\infty)$ such that
\begin{equation}\label{eq:claim}
\underset{n\in\N}{\sup}\E|\epsilon_{n}^{(0)}(p)|^2\le \frac{C\gamma_p}{\lambda_p}\bigl(1\wedge (\lambda_p\Delta T)\bigr)^2.
\end{equation}
Using the It\^o isometry formula, and the fact $b^m-a^m\le mb^{m-1}(b-a)$ for all $0\le a\le b$ and $m\in\N$, (with $b=V(-\lambda_p(\Delta T)$ and $a=e^{-\lambda_p\Delta T}$), one obtains
\begin{align*}
\E|\epsilon_n^{(0)}(p)|^2&=\gamma_p\Delta T\sum_{m=0}^{n-1}\left(\frac{1}{(1+\lambda_p\Delta T)^{(n-m)}}-e^{-\lambda_p(n-m)\Delta T}\right)^2\\
&=\gamma_p\Delta T\sum_{m=1}^{n}\left(\frac{1}{(1+\lambda_p\Delta T)^{m}}-e^{-\lambda_pm\Delta T}\right)^2\\
&\le \gamma_p\Delta T\sum_{m=1}^{n}\left(\frac{1}{(1+\lambda_p\Delta T)^{m}}-e^{-\lambda_pm\Delta T}\right)\frac{1}{(1+\lambda_p\Delta T)^{m}}\\
&\le C\gamma_p\Delta T\sum_{m=1}^{n}\frac{m}{(1+\lambda_p\Delta T)^{(2m-1)}}\bigl(1\wedge (\lambda_p\Delta T)^2\bigr)\\
&\le C\gamma_p\sum_{m=1}^{\infty}\frac{m\lambda_p\Delta T}{(1+\lambda_p\Delta T)^{(m-1)}} \frac{\bigl(1\wedge (\lambda_p\Delta T)^2\bigr)}{\lambda_p(1+\lambda_p\Delta T)}\\
&\le C\gamma_p\frac{\bigl(1\wedge (\lambda_p\Delta T)^2\bigr)}{\lambda_p}.
\end{align*}
This concludes the proof of the claim~\eqref{eq:claim}. Then, using the expression $\gamma_p=\lambda_p^{\frac12-2\al}$ of the eigenvalues of the covariance operator $Q$, one obtains
\begin{align*}
\vvvert\epsilon_n^{(k)}\vvvert_2&=\left(\sum_{p=1}^{\infty}\E|\epsilon_n^{(k)}(p)|^2\right)^{\frac12}\\
&\le C_k\left(\sum_{p=1}^{\infty}\frac{1}{\lambda_p^{\frac12+2\alpha}}\bigl(1\wedge (\lambda_p\Delta T)\bigr)^{2(k+1)}\right)^{\frac12}.
\end{align*}
It remains to identify the orders of convergence. On the one hand, assume that $k+1<\al$. Using $\bigl(1\wedge (\lambda_p\Delta T)\bigr)^{2(k+1)}\le (\lambda_p\Delta T)^{2(k+1)}$ yields
\[
\vvvert\epsilon_n^{(k)}\vvvert_2
\le C\Delta T^{k+1}\left(\sum_{p=1}^\infty\lambda_p^{-\frac12-2(\al-k-1)}\right)^\frac12\le C_{k,\al}\Delta T^{k+1},
\]
since $\al-k-1>0$.

On the other hand, assume that $k+1\ge \al$, and let $\alpha\in(0,\al)$. Using the inequality $\bigl(1\wedge (\lambda_p\Delta T)\bigr)^{2(k+1)}\le (\lambda_p\Delta T)^{2\alpha}$ yields
\[
\vvvert\epsilon_n^{(k)}\vvvert_2
\le C\Delta T^{\alpha}\left(\sum_{p=1}^\infty\lambda_p^{-\frac12-2\al+2\alpha}\right)^\frac12\le C_{k,\alpha}\Delta T^{\alpha}.
\]
This concludes the proof of Theorem~\ref{theo:error-implicit}.
\end{proof}

\subsection{Numerical experiments}\label{sec:4-num}

The objective of this section is to demonstrate that the orders of convergence obtained in Theorem~\ref{theo:error-implicit} are sharp. In addition, experiments in the semilinear case ($F\neq 0$) are also provided.

First, the SPDE $du(t)=Au(t)+dW^Q(t)$, with $u(0)=0$, is considered, where the covariance $Q$ is given as above ($Qe_p=\gamma_pe_p$, with $\gamma_p=\lambda_p^{\frac12-2\al}$). Spatial discretization is performed using finite differences, with mesh size $h=0.01$. In addition, the noise is truncated, {\it i.e.} the $Q$-Wiener process $W^Q(t)$ is replaced by $\sum_{p=1}^{P}\gamma_p^{\frac12}\beta_p(t)e_p$, with $P=100$. Numerical parameters are chosen as follows: the final time is $T=1$, the fine time-step size is $\delta t=2^{-13}$, and the coarse time-step size is $\Delta T=J\delta t$ with $J=2^{j}$, $j=4,\cdots,9$. An average over $M=100$ independent Monte-Carlo samples is used to approximate the expectations.

First, Figure~\ref{fig:imp-1} reports numerical simulations for $\al=0.25$ (space-time white noise) and $\al=0.5$ (trace-class noise). For all values of $k$, the order of convergence is equal to $\al$, as predicted by Theorem~\ref{theo:error-implicit}.

\begin{figure}[h]
\centering
  \includegraphics[width=7.6cm]{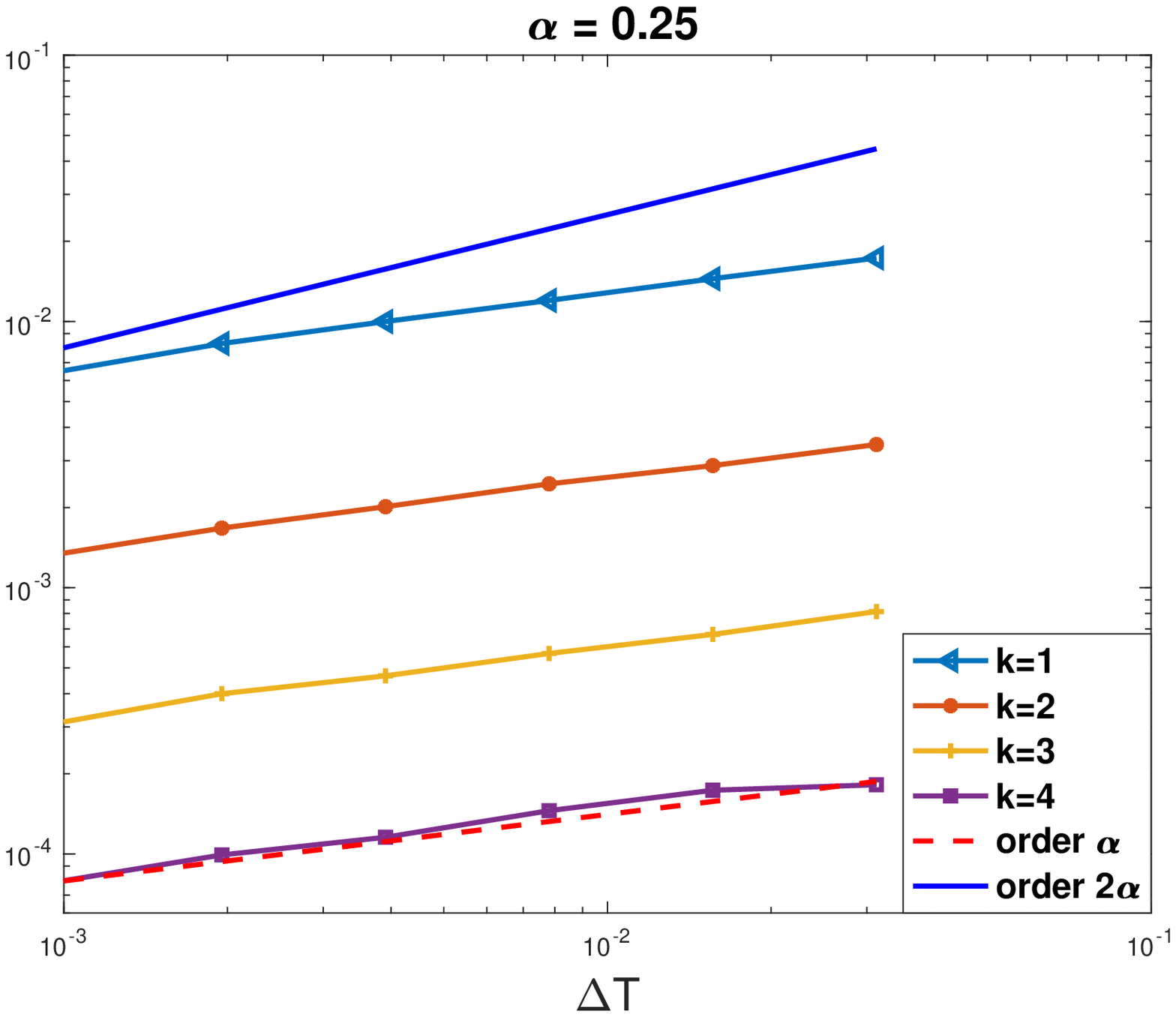}
  \includegraphics[width=7.6cm]{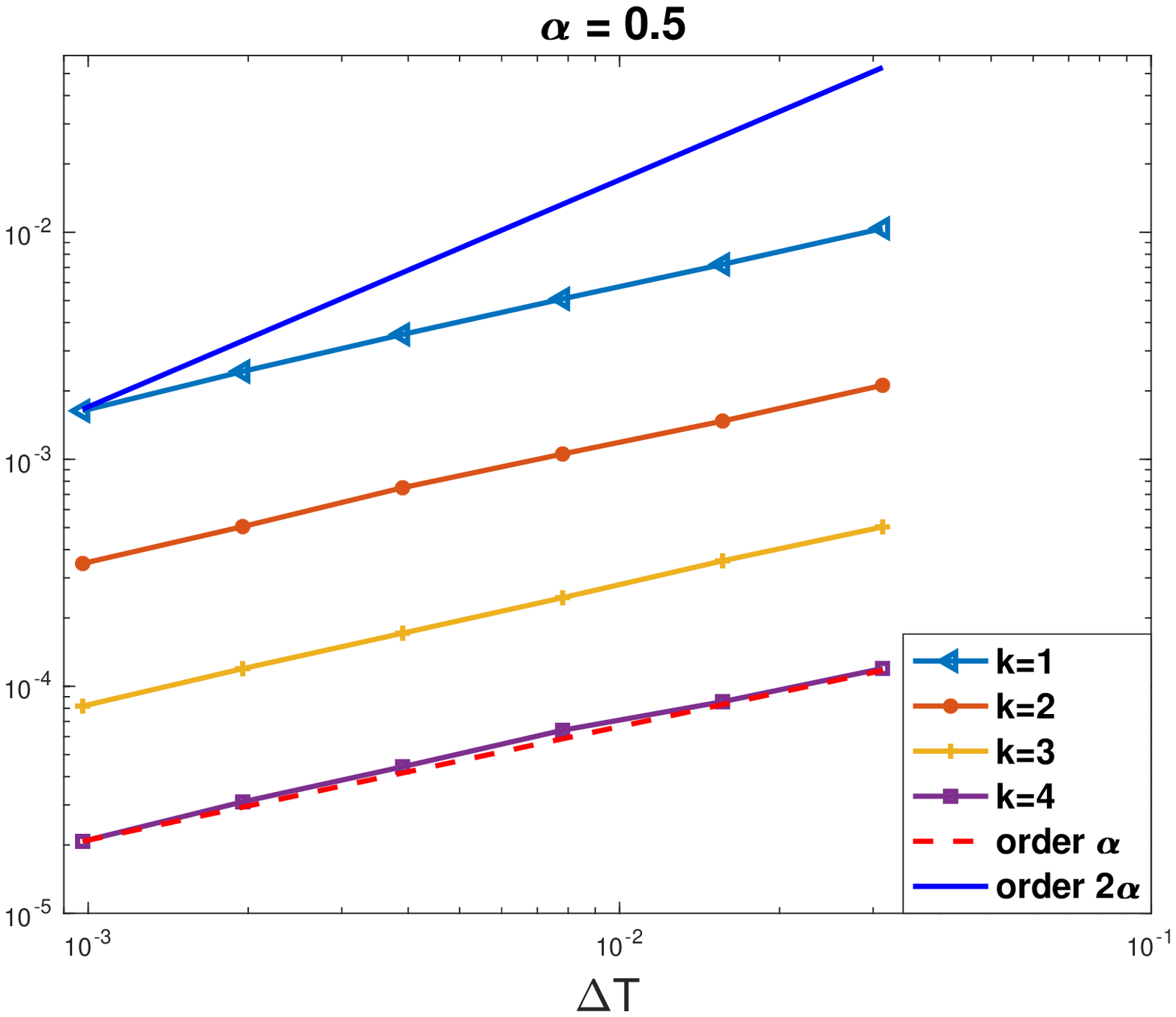}
  \caption{
Orders of convergence of the error with respect to $\Delta T$, for $\al=0.25$ (left) and $\al=0.5$ (right), for different values of $k\in\{1,2,3,4\}$, in the linear implicit Euler scheme case.
}
  \label{fig:imp-1}
\end{figure}

Second, Figures~\ref{fig:imp-2} (two fixed values of $k$ and $\al$ varies) and~\ref{fig:imp-3} (two fixed values of $\al$ and $k$ varies) allow us to check that the orders of convergence in Theorem~\ref{theo:error-implicit} are sharp: the order is indeed equal to $\min(\al,k+1)$.

\begin{figure}[h]
\centering
  \includegraphics[width=7.6cm]{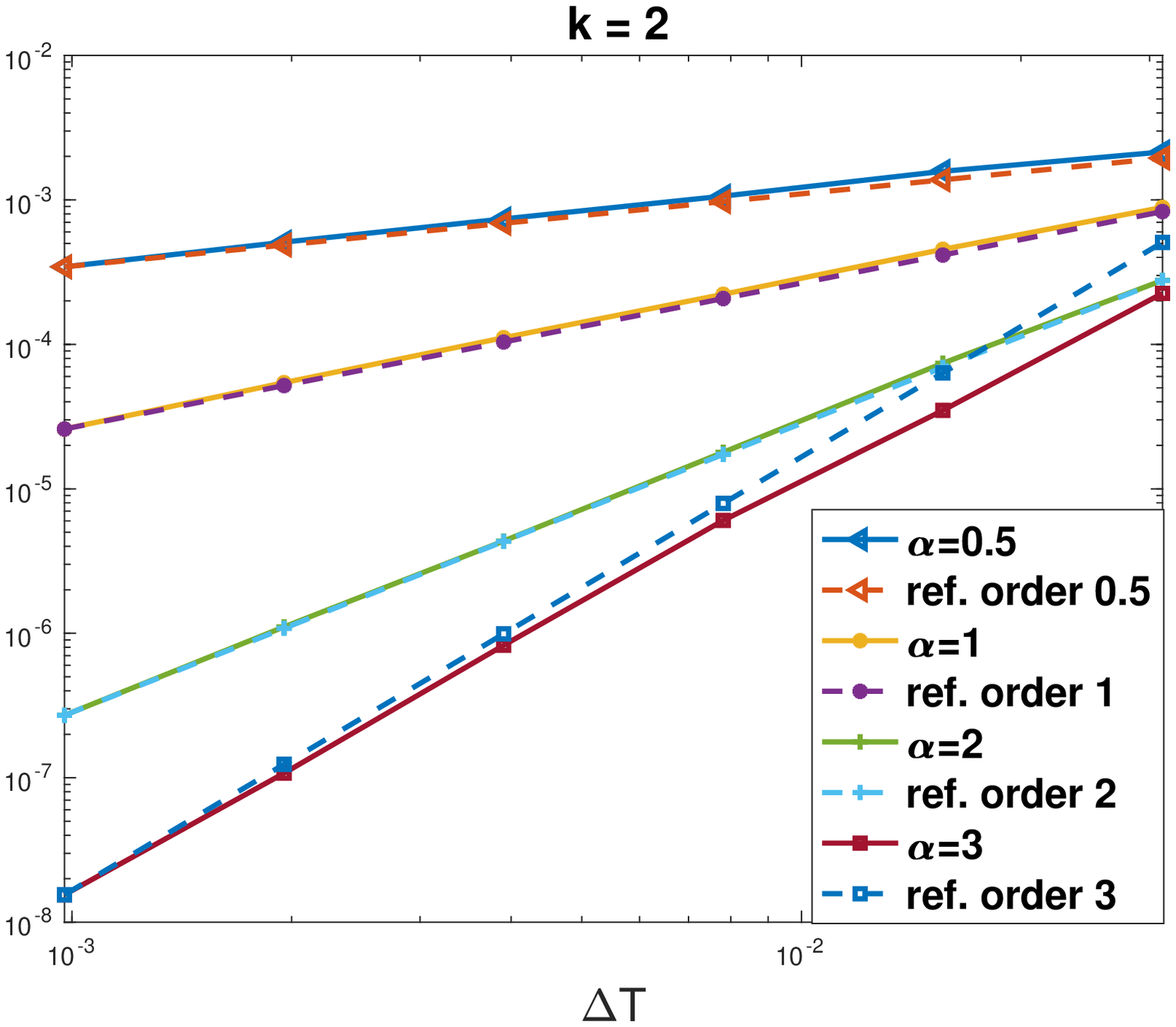}
  \includegraphics[width=7.6cm]{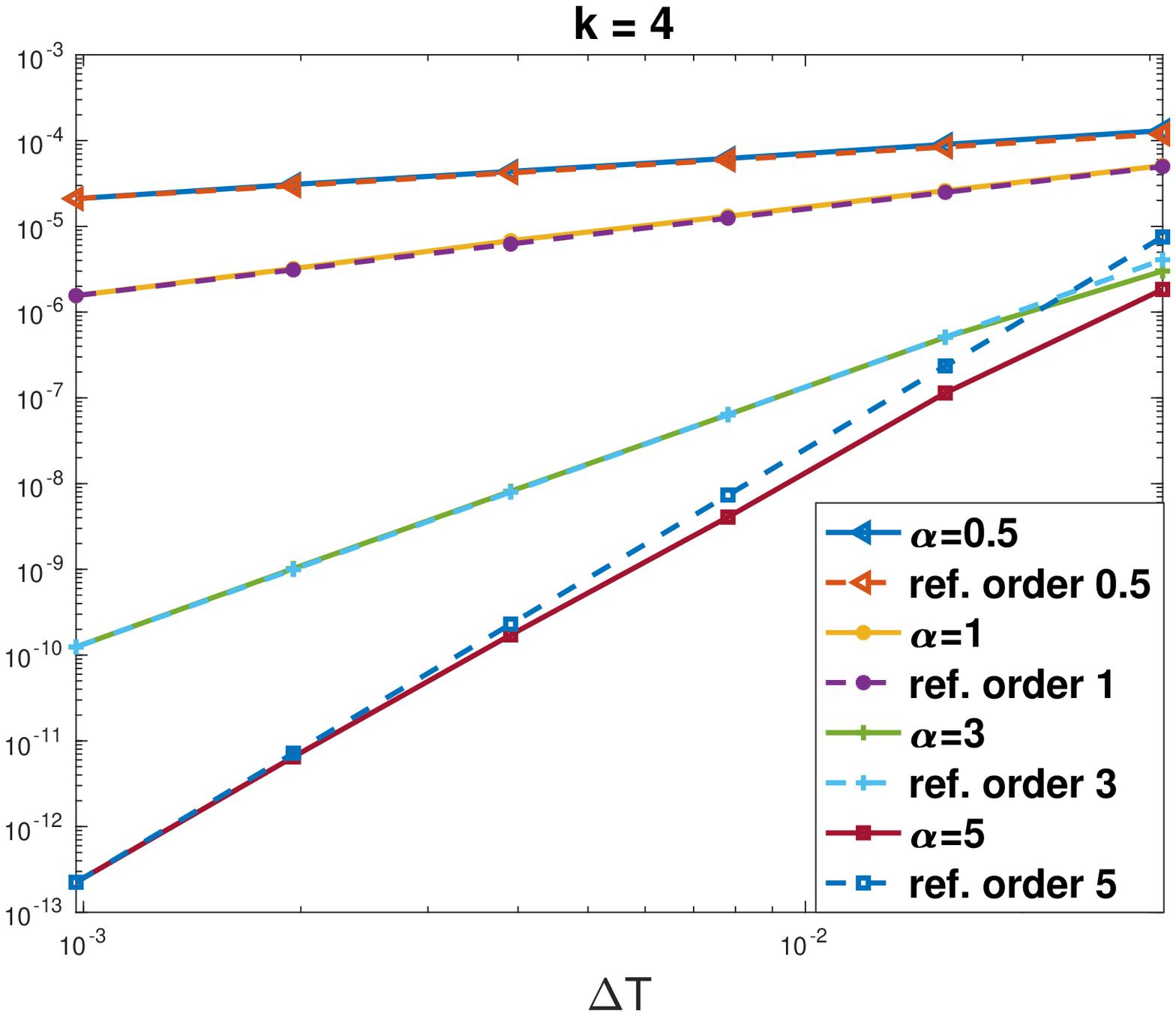}
  \caption{
Orders of convergence of the error with respect to $\Delta T$, for $k=2$ (left) and $k=4$ (right), for different values of $\al$, in the linear implicit Euler scheme case.
  }
  \label{fig:imp-2}
\end{figure}

\begin{figure}[h]
\centering
  \includegraphics[width=7.6cm]{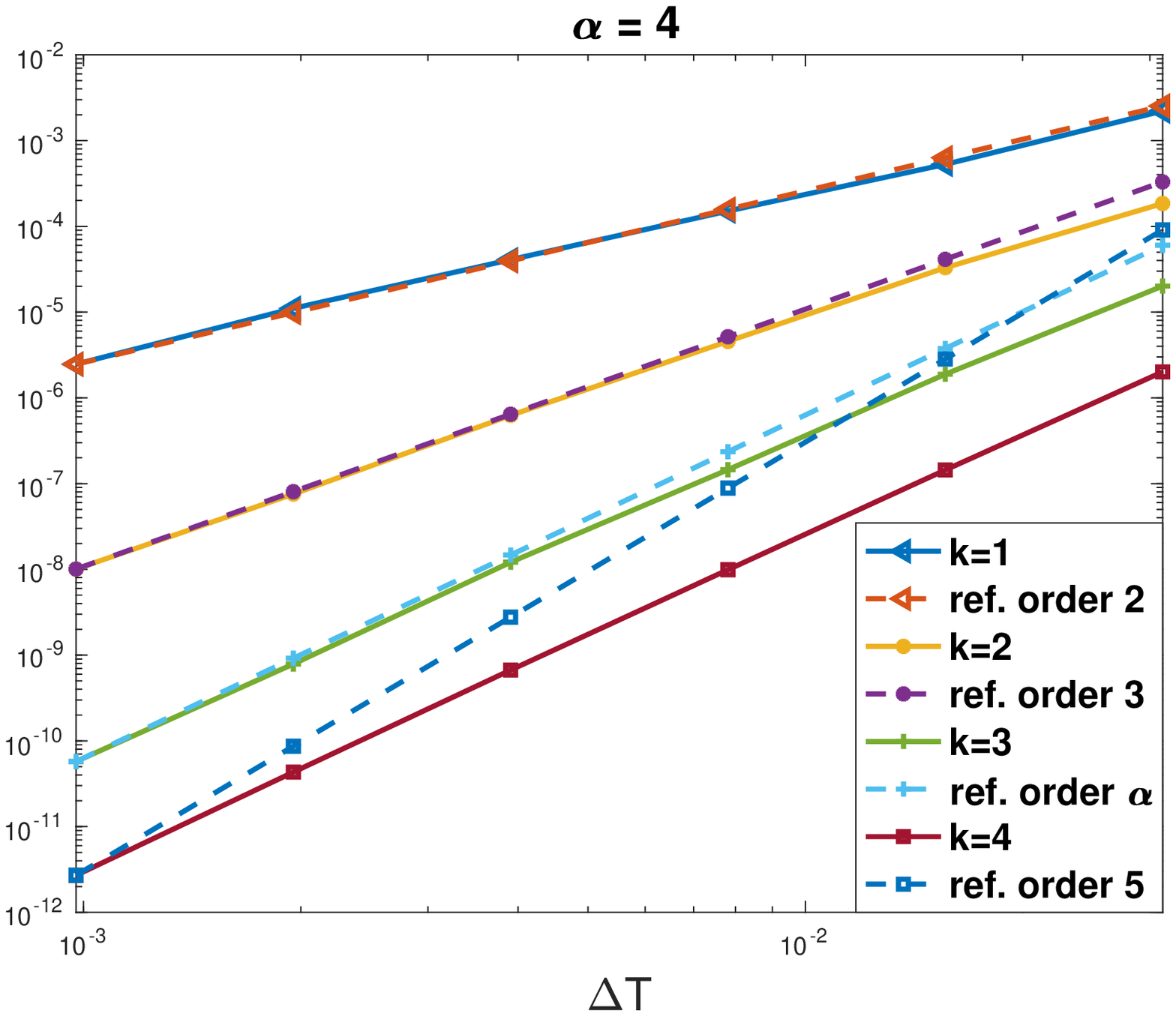}
  \includegraphics[width=7.6cm]{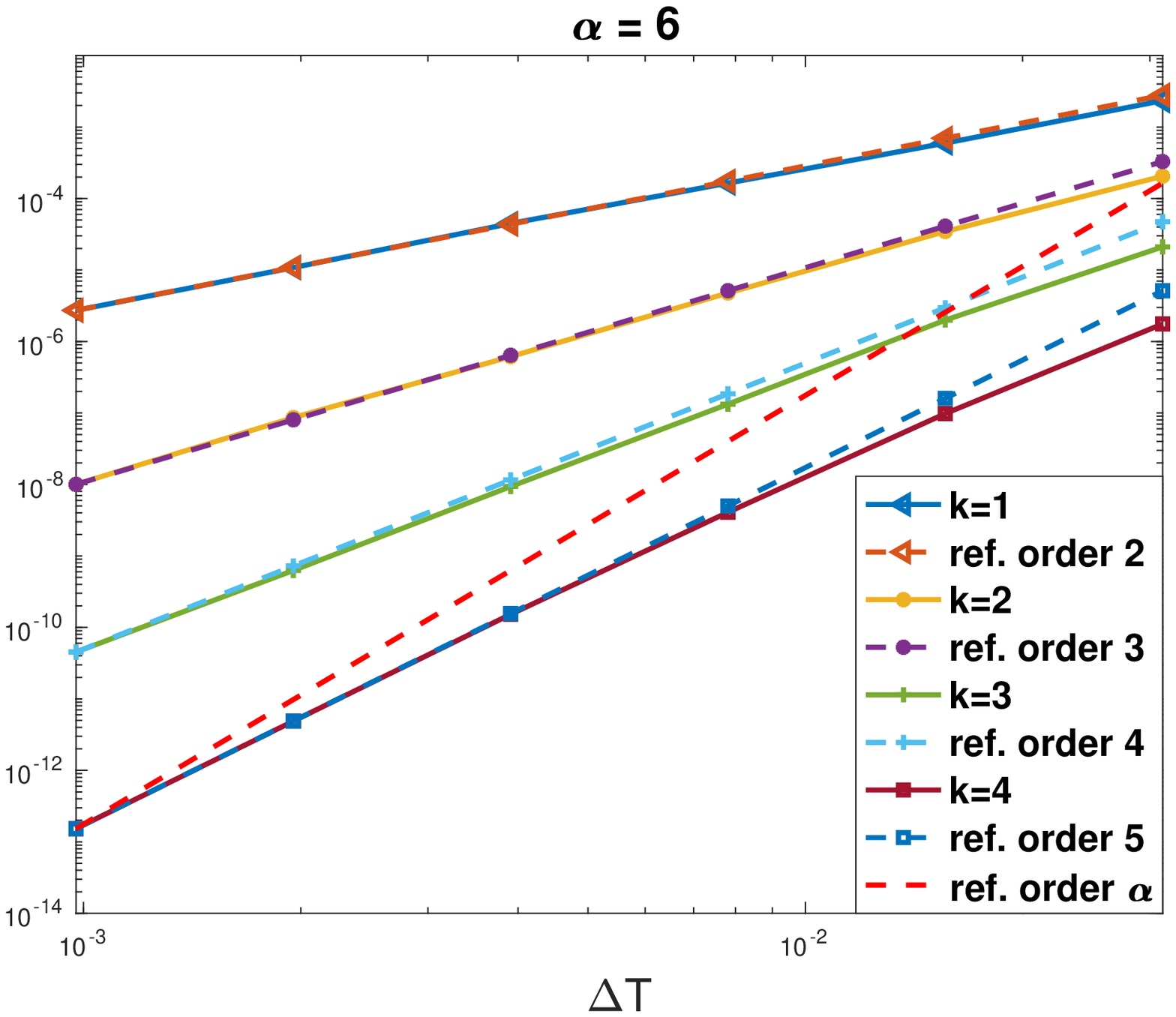}
  \caption{
  Orders of convergence of the error with respect to $\Delta T$, for $\al=4$ (left) and $\al=6$ (right), for different values of $k\in\{1,2,3,4\}$, in the linear implicit Euler scheme case.
}
  \label{fig:imp-3}
\end{figure}

To conclude this section, we report numerical simulations in the semilinear case.

Figures \ref{fig:imp4} and \ref{fig:imp5} show the order for semilinear equation \eqref{eq:SPDE} with $F(u)=\cos(u)$ and $F(u)=5\cos(u)$, respectively. The order for the additive noise case with $\al=4$ (on the left) is limited to $\frac32$ when $k\ge2$, which is the same as the deterministic case (on the right).

\begin{figure}[h]
\centering
  \includegraphics[width=7.6cm]{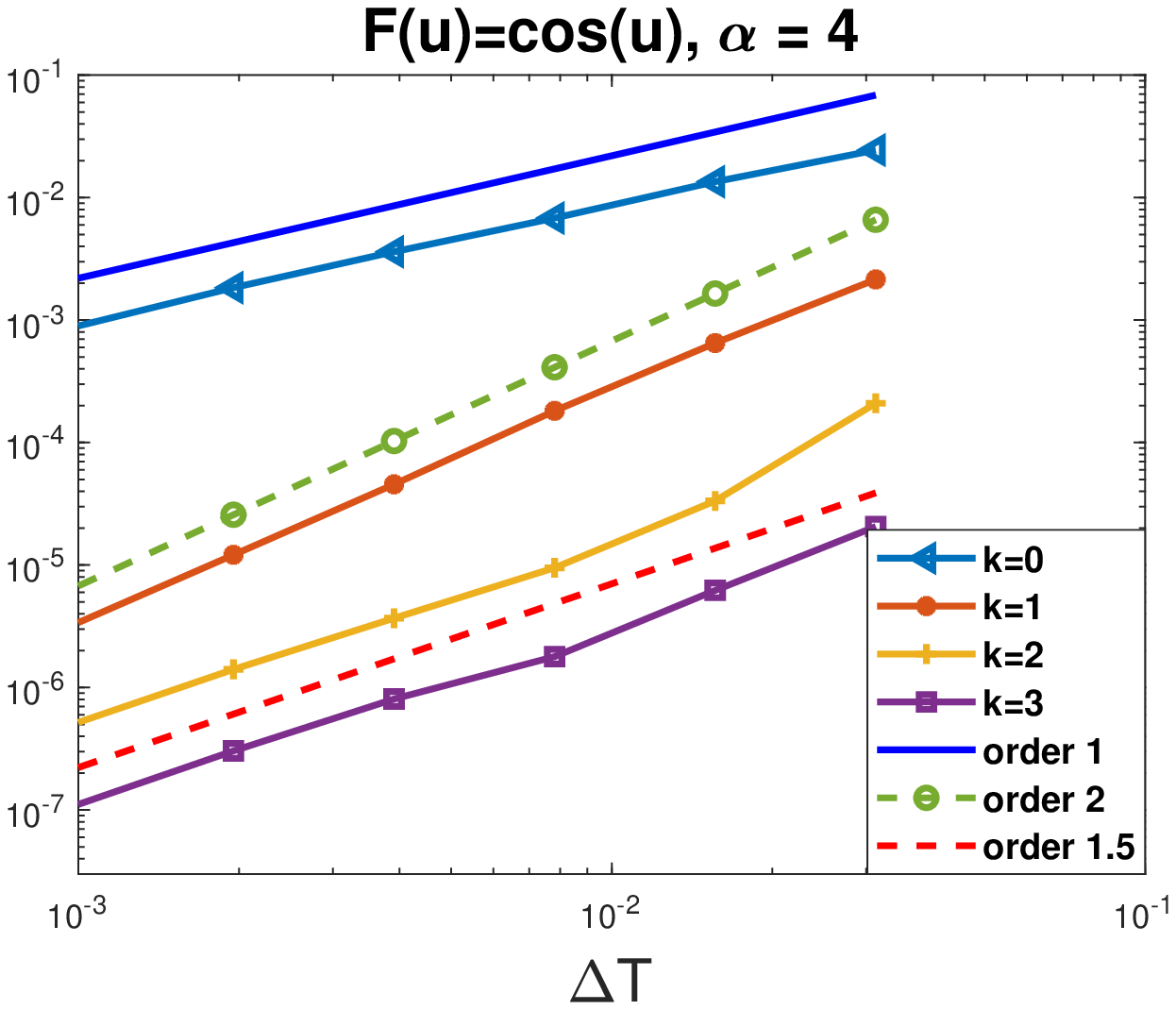}
  \includegraphics[width=7.6cm]{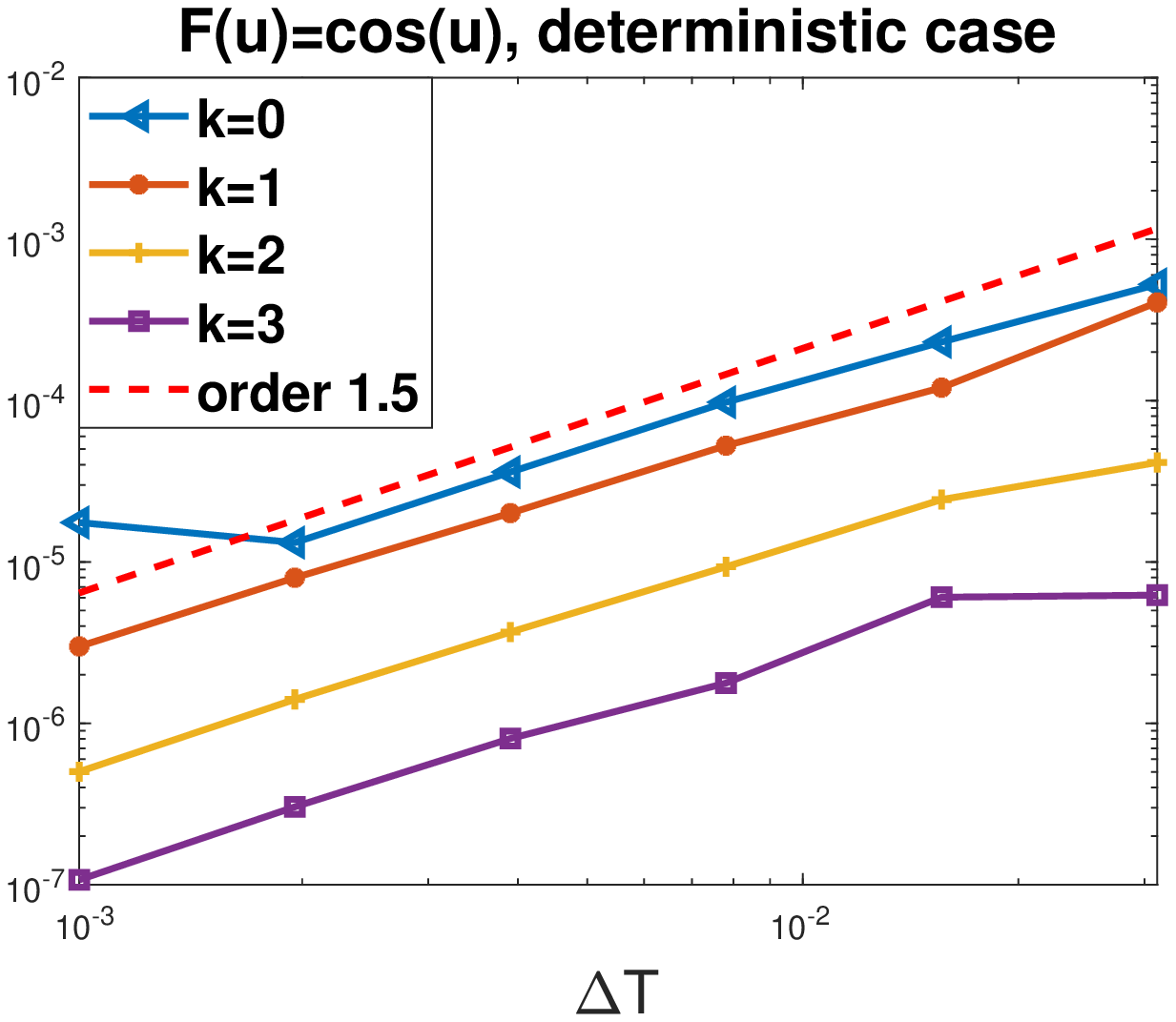}
  \caption{Orders of convergence of the error with respect to $\Delta T$, for additive noise case with $\al=4$ (left) and deterministic case (right), for nonlinear term $F(u)=\cos(u)$ and different values of $k\in\{0,1,2,3\}$, in the linear implicit Euler scheme case.}
  \label{fig:imp4}
\end{figure}

\begin{figure}[h]
\centering
  \includegraphics[width=7.6cm]{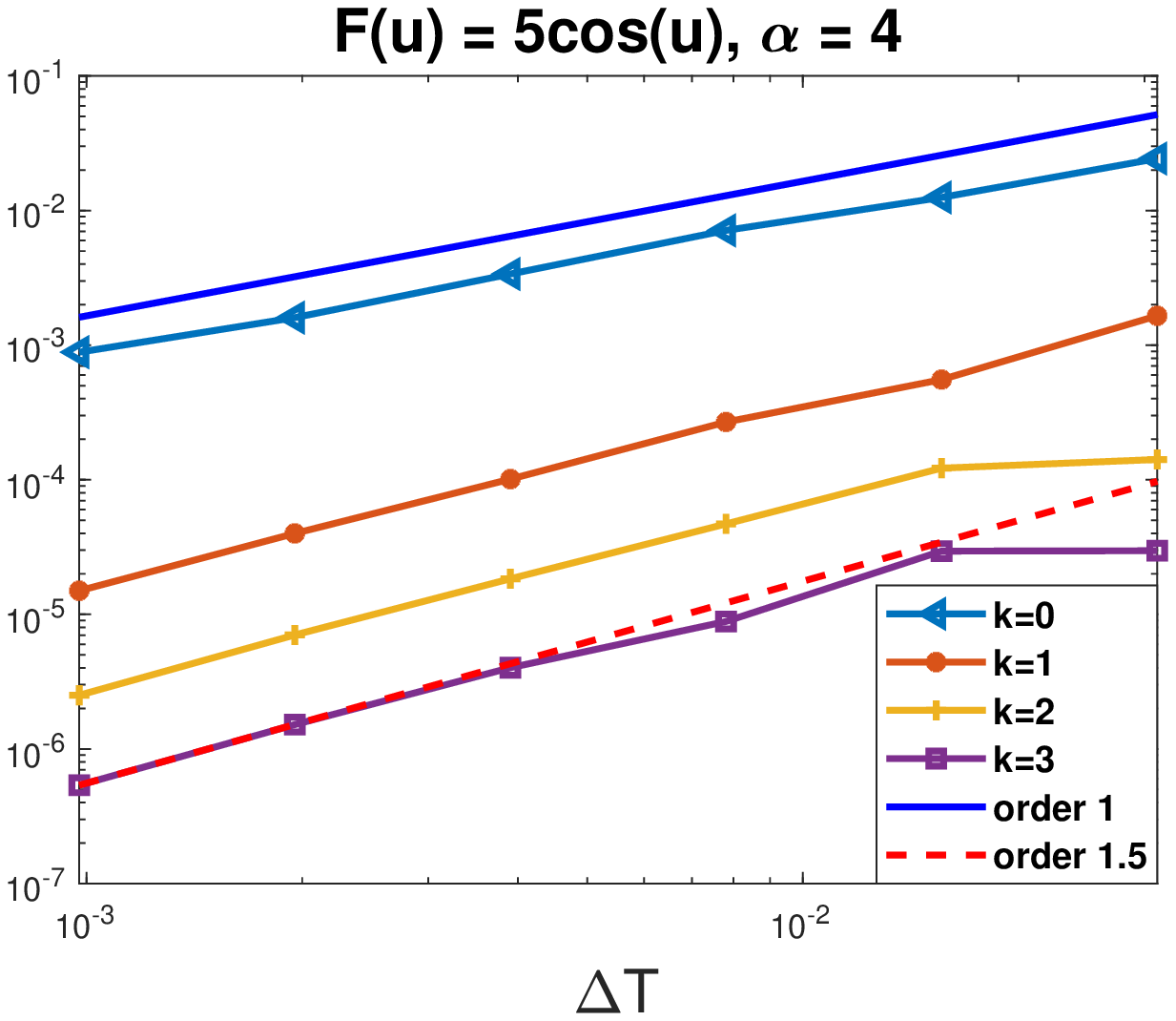}
  \includegraphics[width=7.6cm]{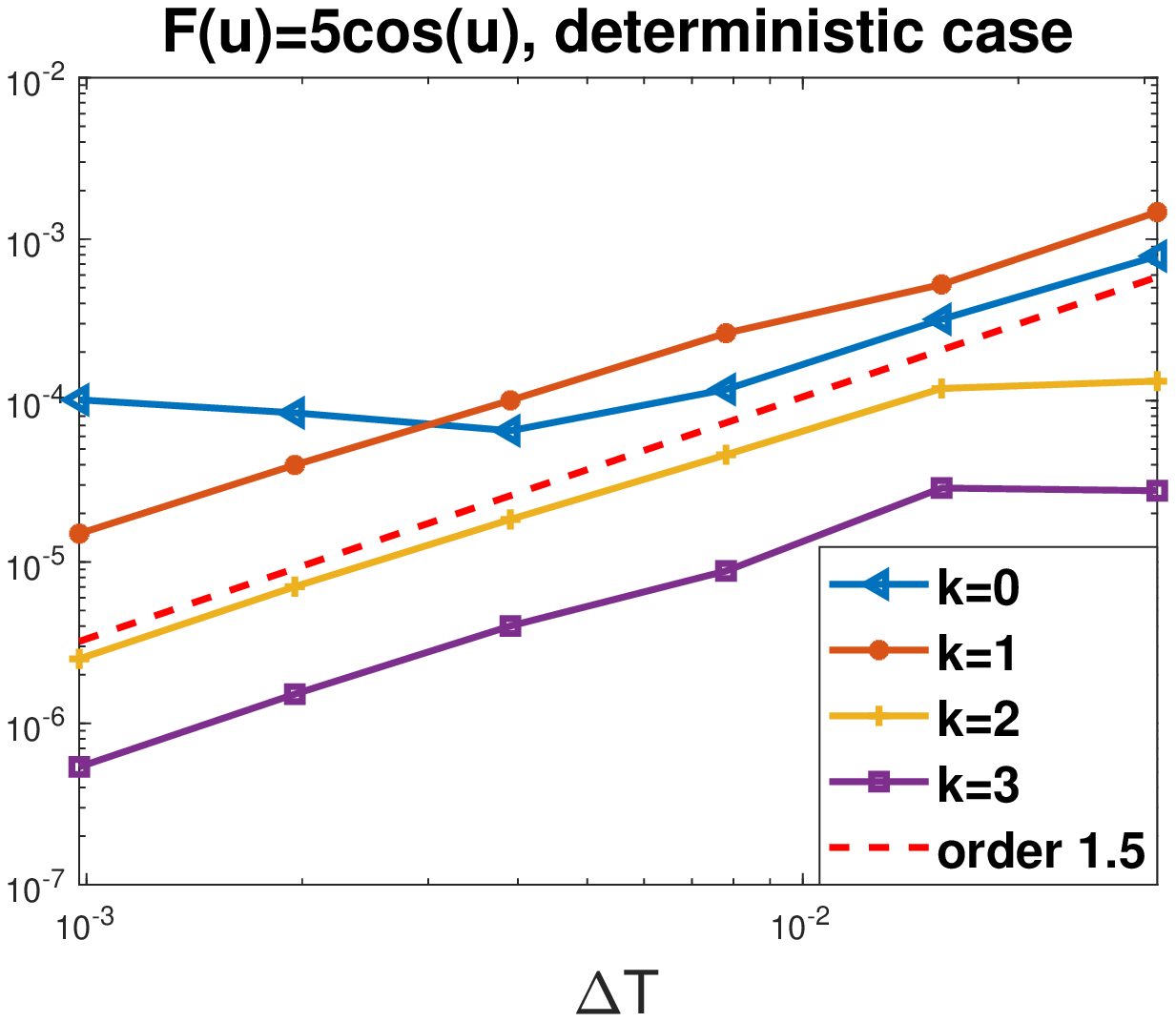}
  \caption{Orders of convergence of the error with respect to $\Delta T$, for additive noise case with $\al=4$ (left) and deterministic case (right), for nonlinear term $F(u)=5\cos(u)$ and different values of $k\in\{0,1,2,3\}$, in the linear implicit Euler scheme case.}
  \label{fig:imp5}
\end{figure}

\section{Exponential Euler scheme as the coarse integrator}\label{sec:5}

The objective of this section is to prove that, when the exponential Euler scheme is chosen as the coarse integrator, {\it i.e.} $\G=\G_{\rm expo}$ with $\hat S_{\Delta T}=e^{\Delta TA}$, then parareal iterations improve the rate of convergence of the error $\epsilon_n^{(k)}$ to $0$, in terms of $\Delta T$. Contrary to the situation of Section~\ref{sec:4}, this effect holds true without restrictions on the regularity parameter $\al$, in particular for $\al=\frac14$ (space-time white noise).

The analysis in this section is performed for the SPDE~\eqref{eq:SPDE}, with the nonlinear coefficient $F$ satisfying Assumption~\ref{ass:F}. Let also Assumption~\ref{ass:Q} be satisfied.

The content of this section is organized as follows. The main results of this sections are the error estimates stated in Theorems~\ref{theo:error-exp1} -- which gives an order of convergence $(k+1)\min(\al,\frac12)$ for all $k\in\N_0$ -- and~\ref{theo:error-exp2} -- which gives an improved order of convergence when $k\ge 2$, see Section~\ref{sec:5-error}. Numerical experiments in Section~\ref{sec:5-num} illustrate that the result in Theorem~\ref{theo:error-exp1} is sharp when $k=0$ and $k=1$, and that indeed better convergence rates are obtained for $k\ge 2$. Proofs of the results are provided in Section~\ref{sec:5-proof}, based on auxiliary results which are proved in Section~\ref{sec:5-proof-aux}.

Observe that, when the coarse integrator is the exponential Euler scheme, then the recursion formula~\eqref{eq:error} for the error yields the equality
\begin{equation}\label{eq:error_exp}
\epsilon_{n}^{(k+1)}
=\Delta T\sum_{m=0}^{n-1}e^{(n-m)\Delta T A}\left[F(u_{m}^{(k+1)})-F(u^{\rm ref}_{m})\right]+\sum_{m=0}^{n-1}e^{(n-1-m)\Delta T A}\left[\RR_m(u_{m}^{(k)})-\RR_m(u_{m}^{\rm ref})\right],
\end{equation}
where we recall that the residual operator $\RR_n$ are defined by~\eqref{eq:residual_def}. As will be clear below, if $F=0$ then $\epsilon_{n}^{(k)}=0$ for all $n$, as soon as 
$k\ge 1$. This property reveals why the choice of the exponential Euler scheme as the coarse and the fine integrator provides better results.

Note also that the fact that the noise is additive in~\eqref{eq:SPDE} is fundamental in the analysis. In addition, since noise is additive, it is expected that the order of convergence for $k=0$ may be equal to $1$ (instead of $\frac12$) if $\al$ is sufficiently large (at least larger than $\frac12$). This effect is not considered below, since the objective is mainly to study the increase in the order of convergence produced by parareal iterations, and in particular in situations where the noise is not very regular, {\it i.e.} for space-time white noise, with $\al=\frac14$.

\subsection{Statement of error estimates}\label{sec:5-error}

For the analysis, it is important to first state moment bounds for the solution $u_n^{(k)}$, which are similar to Propositions~\ref{propo:bound_u} and~\ref{propo:bound_ref} for the exact solution $u(t_n)$ and the reference solution $u_n^{\rm ref}$ respectively.
\begin{propo}\label{propo:bound_num}
For $T\in(0,\infty)$, $k\in\N_0$, $q\in\N$. There exists $C_{T,k,q}\in(0,\infty)$ such that, for all $u_0\in H$ and $\Delta T\in(0,1)$,
\[
\sup_{n\Delta T\le T}\vvvert u_n^{(k)}\vvvert_q\le C_{T,k,q}(1+|u_0|).
\]
Moreover, let $\alpha\in[0,\min(\al,\frac12))$. There exists $C_{T,k,q,\alpha}\in(0,\infty)$ such that, for all $u_0\in D((-A)^{\alpha})$ and $\Delta T\in(0,1)$,
\[
\sup_{n\Delta T\le T}\vvvert u_n^{(k)}\vvvert_{q,\alpha}\le C_{T,k,q,\alpha}(1+|u_0|_\alpha).
\]
\end{propo}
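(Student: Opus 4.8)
The plan is to prove both bounds by induction on the parareal index $k$. For the base case $k=0$, the solution $u_n^{(0)}$ is produced by iterating the coarse integrator $\G_{\rm expo}$, so $u_{n+1}^{(0)}=e^{\Delta TA}u_n^{(0)}+\Delta T\,e^{\Delta TA}F(u_n^{(0)})+e^{\Delta TA}\bigl(W^Q(t_{n+1})-W^Q(t_n)\bigr)$. Unrolling this recursion expresses $u_n^{(0)}$ as $e^{t_nA}u_0$, plus a discrete convolution of the semigroup against $F$-terms, plus a stochastic convolution term. Using $\|e^{tA}\|_{\mathcal{L}(H)}\le 1$ from Proposition~\ref{propo:A1}, the linear growth of $F$ (from Assumption~\ref{ass:F}), and the standard estimate on the discrete stochastic convolution $\sum_{m} e^{(n-m)\Delta TA}\bigl(W^Q(t_{m+1})-W^Q(t_m)\bigr)$ in $L^q$ — which is controlled by $\|(-A)^{\alpha-\frac12}Q^{1/2}\|_{\mathcal{L}_2(H)}$ via the It\^o isometry, Gaussianity (to upgrade the $L^2$ bound to an $L^q$ bound), and summability of $\sum_m \min(m\Delta T,1)^{-2\alpha}\Delta T$ for $\alpha<\frac12$ — one obtains $\sup_{n\Delta T\le T}\vvvert u_n^{(0)}\vvvert_q\le C(1+|u_0|)$ by a discrete Gronwall argument to absorb the $F$-contribution. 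The weighted bound $\vvvert u_n^{(0)}\vvvert_{q,\alpha}$ follows the same way, applying $(-A)^\alpha$ to each term and using $\|(-A)^\alpha e^{tA}\|_{\mathcal{L}(H)}\le C_\alpha\min(t,1)^{-\alpha}$ together with the smoothing needed to make the $F$-convolution and the stochastic convolution finite in the $|\cdot|_\alpha$ norm; here $u_0\in D((-A)^\alpha)$ handles the deterministic term.

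For the induction step, suppose the bounds hold for indices $0,\dots,k$ (uniformly in $n$ with $n\Delta T\le T$). The recursion~\eqref{eq:parareal} reads $u_{n+1}^{(k+1)}=\G_n(u_n^{(k+1)})+\F_n(u_n^{(k)})-\G_n(u_n^{(k)})$. The key point is to bound the two ``known'' terms $\F_n(u_n^{(k)})$ and $\G_n(u_n^{(k)})$ in $L^q$ (resp. in the weighted norm) using the induction hypothesis on $u_n^{(k)}$: the coarse term $\G_n(u_n^{(k)})$ is immediately controlled as above, and the fine term $\F_n(u_n^{(k)})=\F(u_n^{(k)},t_n,t_{n+1})$ is a composition of $J$ steps of $\F_{\rm aux}$, which is again a semigroup/stochastic-convolution expression over the fine grid; its $L^q$ norm is bounded by $C(1+\vvvert u_n^{(k)}\vvvert_q)$ uniformly in $J$ and $\Delta T$, by exactly the same discrete-convolution estimates applied on the fine time-step $\delta t$ (crucially the constants do not blow up as $\delta t\to0$ because the relevant sums $\sum \min(\ell\delta t,1)^{-2\alpha}\delta t$ converge, and the exponential factor $\|e^{\delta tA}\|\le1$ keeps things stable). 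Then $u_{n+1}^{(k+1)}=e^{\Delta TA}u_n^{(k+1)}+\Delta T\,e^{\Delta TA}F(u_n^{(k+1)})+\bigl(\text{known, bounded terms}\bigr)$, which is a linear recursion in $u_n^{(k+1)}$ with a globally-Lipschitz perturbation $F$ and a uniformly bounded inhomogeneity; a discrete Gronwall inequality again closes the estimate $\sup_{n\Delta T\le T}\vvvert u_n^{(k+1)}\vvvert_q\le C_{T,k+1,q}(1+|u_0|)$, and similarly in the weighted norm.

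The main obstacle, and the place to be careful, is the estimate on the fine integrator $\F_n(u_n^{(k)})$ in the weighted norm $|\cdot|_\alpha$ with constants uniform in $\delta t$ and $J$: one must verify that iterating $\F_{\rm aux}$ over $J=\Delta T/\delta t$ steps does not accumulate a factor growing with $J$, which relies on the contractivity $\|e^{\delta tA}\|_{\mathcal{L}(H)}\le e^{-\lambda_1\delta t}\le 1$, on the fact that the $F$-term carries a full power of $\delta t$ so its contribution is controlled by a Riemann sum, and on the convergence of $\sum_{\ell\ge1}\min(\ell\delta t,1)^{-2\alpha}\delta t$ uniformly in $\delta t$ for $\alpha<\frac12$. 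Once this uniform-in-$\delta t$ control of $\F_n$ is in hand, the rest is a routine discrete Gronwall argument together with the Gaussian moment bounds (linearity of $F$ means no quadratic-in-$k$ blowup beyond constants $C_{T,k,q}$ depending on $k$). The deterministic-initial-datum hypothesis keeps all moments finite, and the extension to random $u_0$ is by conditioning as noted after~\eqref{eq:SPDE}.
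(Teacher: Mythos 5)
Your base case ($k=0$) is fine and is essentially what the paper does (the paper simply invokes Proposition~\ref{propo:bound_ref}, since in this section the initialization applies the exponential Euler scheme with time-step $\Delta T$). The genuine gap is in your induction step. You propose to bound $\F_n(u_n^{(k)})$ and $\G_n(u_n^{(k)})$ \emph{separately} by $C\bigl(1+\vvvert u_n^{(k)}\vvvert_q\bigr)$ and then to treat the recursion for $u^{(k+1)}$ as a linear recursion with a ``uniformly bounded inhomogeneity'', closed by a discrete Gronwall argument. A per-step inhomogeneity of size $O(1)$ does not close: unrolling the recursion, the inhomogeneous contributions are summed over $n\le N=T/\Delta T$ coarse steps, and neither the triangle inequality nor the decay $\|e^{j\Delta TA}\|_{\mathcal{L}(H)}\le e^{-\lambda_1 j\Delta T}$ gives better than $\sum_{m<n}e^{-\lambda_1(n-1-m)\Delta T}\cdot O(1)\lesssim \Delta T^{-1}$. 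Your argument therefore only yields $\sup_{n\Delta T\le T}\vvvert u_n^{(k+1)}\vvvert_q\le C\bigl(1+|u_0|+\Delta T^{-1}\bigr)$, a constant that blows up as $\Delta T\to 0$, whereas the proposition requires uniformity over $\Delta T\in(0,1)$. The same objection applies to lumping the noise increment $e^{\Delta TA}\bigl(W^Q(t_{n+1})-W^Q(t_n)\bigr)$ into the ``known, bounded terms'': per step it is only of size $O(\Delta T^{\alpha})$, and a term-by-term bound of the resulting convolution again diverges; you invoked the It\^o isometry for exactly this point in the base case but silently drop it in the induction step.

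What is missing is the exploitation of a \emph{difference} structure, which is how the paper proceeds: write $u_n^{(k)}=u_n^{\rm ref}+\epsilon_n^{(k)}$ (moments of $u_n^{\rm ref}$ being given by Proposition~\ref{propo:bound_ref}) and use the error recursion~\eqref{eq:error_exp}, in which only the residual \emph{differences} $\RR_m(u_m^{(k)})-\RR_m(u_m^{\rm ref})$ appear. Since the noise is additive, the stochastic contributions cancel exactly in these differences, and Lemma~\ref{lm:LipR}--$({\rm i})$ gives $|\RR_m(u_2)-\RR_m(u_1)|_\alpha\le C\Delta T^{1-\alpha}|u_2-u_1|$; with $\alpha=0$ the per-step contribution is $O(\Delta T)$, so the sum over $N$ steps stays $O(T)$, and the discrete Gronwall lemma plus a recursion in $k$ closes the bound, while for $\alpha>0$ one combines the same lemma with the smoothing estimate of Proposition~\ref{propo:A1} (treating the term $m=n-1$ separately, where no smoothing is available). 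Alternatively, you could salvage your route by estimating $\RR_m(u_m^{(k)})$ itself rather than $\F_m$ and $\G_m$ separately, splitting it into its deterministic part, which carries an explicit factor $\Delta T$ through $\delta t\sum_{j}e^{(J-j)\delta tA}F(v_{m,j})-\Delta Te^{\Delta TA}F(u)$, and its $u$-independent, mean-zero noise part, which is independent across coarse intervals and hence summable via the It\^o isometry and Gaussianity. Some such cancellation or smallness argument is indispensable; the plan as written does not prove the proposition.
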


The first error estimates are stated in Theorem~\ref{theo:error-exp1}, which may be interpreted as follows: each parareal iteration improves the rate of convergence, proportionally to $\min(\al,\frac12)$.
Let us stress that Theorem~\ref{theo:error-exp1} is optimal for $k=0$ and $k=1$, as illustrated by the numerical experiments reported in Section \ref{sec:5-num}.

\begin{theo}\label{theo:error-exp1}
Let $T\in(0,\infty)$, $k\in\N_0$, $q\in\N$, $\alpha\in[0,\min(\al,\frac12))$, and arbitrarily small $\kappa>0$. For all $u_0\in D((-A)^{\alpha})$, there exists $C_{T,k,q,\alpha,\kappa}(u_0)\in(0,\infty)$ such that, for all  $\Delta T\in(0,1)$,
\[
\sup_{n\Delta T\le T}\vvvert \epsilon_n^{(k)}\vvvert_q\le C_{T,k,q,\alpha,\kappa}(u_0)\Delta T^{(k+1)(\alpha-\kappa)}.
\]
\end{theo}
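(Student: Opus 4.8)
The plan is to proceed by induction on the parareal iteration index $k$, using the recursion formula~\eqref{eq:error_exp} together with a careful analysis of the residual operator $\RR_n$. The base case $k=0$ is the statement that $\sup_{n\Delta T\le T}\vvvert\epsilon_n^{(0)}\vvvert_q\le C\Delta T^{\alpha-\kappa}$, which is essentially the classical strong error estimate of the exponential Euler coarse integrator against the fine exponential Euler reference solution; since both integrators are of exponential Euler type, the residual $\RR_n(u_n^{\rm ref})$ accounts precisely for the difference between one coarse step and $J$ fine steps, and standard arguments (as in Proposition~\ref{propo:bound_ref}, exploiting that the noise is additive so the stochastic integral terms are handled by It\^o isometry and Proposition~\ref{propo:A1}) give order $\alpha-\kappa$. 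For the inductive step, assume the bound holds at level $k$ with exponent $(k+1)(\alpha-\kappa)$; I then want to show it at level $k+1$ with exponent $(k+2)(\alpha-\kappa)$.

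The key structural observation, to be established as an auxiliary lemma, is a \emph{Lipschitz-type estimate on the residual operator}: for $u,v$ in suitable domains with bounded moments,
\[
\vvvert\RR_n(u)-\RR_n(v)\vvvert_q\le C\,\Delta T^{\,\alpha-\kappa}\,\vvvert u-v\vvvert_q,
\]
or more precisely a version where one gains $\Delta T^{\alpha-\kappa}$ relative to a plain Lipschitz bound. This is the crucial point: because $F=0$ would give $\RR_n\equiv 0$ here (the coarse and fine exponential integrators agree on the linear-plus-noise part), the residual difference is entirely driven by the nonlinearity $F$ and the discrepancy between $e^{(t-s)A}$ evaluated at coarse versus fine nodes; writing out $\RR_n(u)-\RR_n(v)$ explicitly, expanding $F(u_{m,j})-F(v_{m,j})$ along the fine grid, and using Assumption~\ref{ass:F} (in particular the smoothing estimates $|DF(w).h|_{-\alpha}\lesssim(1+|w|_{\alpha+\kappa})|h|_{-\alpha}$) together with Proposition~\ref{propo:A1} to absorb the singularity $\min(t,1)^{-\alpha}$, one extracts the extra factor $\Delta T^{\alpha-\kappa}$. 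The moment bounds of Propositions~\ref{propo:bound_ref} and~\ref{propo:bound_num} are used to control the $(1+|w|_{\alpha+\kappa})$ factors uniformly.

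Granting this residual estimate, I plug it into~\eqref{eq:error_exp}: the first sum contributes $\Delta T\sum_{m=0}^{n-1}\|e^{(n-m)\Delta TA}\|_{\mathcal L(H)}\vvvert F(u_m^{(k+1)})-F(u_m^{\rm ref})\vvvert_q\le C\Delta T\sum_m\vvvert\epsilon_m^{(k+1)}\vvvert_q$, a discrete Gr\"onwall term; the second sum contributes $\sum_{m=0}^{n-1}\vvvert\RR_m(u_m^{(k)})-\RR_m(u_m^{\rm ref})\vvvert_q\le C\Delta T^{\alpha-\kappa}\sum_m\vvvert\epsilon_m^{(k)}\vvvert_q\le C\Delta T^{\alpha-\kappa}\cdot\frac{T}{\Delta T}\cdot\Delta T^{(k+1)(\alpha-\kappa)}=C\,T\,\Delta T^{(k+2)(\alpha-\kappa)-1}$. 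This naive count loses a power of $\Delta T$; the remedy is that the sum over $m$ of $n-m$ factors is genuinely a Riemann sum against the analytic semigroup, not a crude bound by the number of terms — one keeps the smoothing $\|(-A)^{\alpha-\kappa}e^{(n-m)\Delta TA}\|$ inside and sums $\sum_m \Delta T(m\Delta T)^{-(\alpha-\kappa)}<\infty$, so that no power of $\Delta T$ is lost in the second sum and it is genuinely $O(\Delta T^{(k+2)(\alpha-\kappa)})$. Then a discrete Gr\"onwall lemma applied to $\vvvert\epsilon_n^{(k+1)}\vvvert_q$ closes the induction.

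The main obstacle I anticipate is proving the sharp residual estimate with the gain $\Delta T^{\alpha-\kappa}$ rather than merely $\Delta T^{1}$ times a negative power, i.e.\ correctly bookkeeping the interplay between the temporal H\"older regularity of the reference/parareal solutions (only $\alpha$, by Proposition~\ref{propo:bound_ref}), the smoothing of the semigroup, and the structure of $F$ encoded in Assumption~\ref{ass:F}; this is where the restriction $\alpha<\min(\al,\tfrac12)$ and the loss $\kappa>0$ enter, and where the three separate inequalities in Assumption~\ref{ass:F} are each needed (the $|\cdot|_{-\alpha}$ bound for the smoothed terms, the $|\cdot|_{\alpha-\kappa}$ bound for the terms hitting the initial data, and the second-difference bound when comparing $DF$ at two arguments along the telescoping). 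Everything else — the two Gr\"onwall arguments and the summation of Riemann sums against the analytic semigroup — is routine.
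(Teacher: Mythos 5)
Your proposal is correct and follows essentially the same route as the paper: a refined Lipschitz estimate for the residual operator gaining $\Delta T^{\alpha-\kappa}$ over the plain $O(\Delta T)$ bound, combined with semigroup smoothing summed as a convergent Riemann sum, then a discrete Gronwall argument and a recursion over $k$ starting from the $k=0$ case given by Proposition~\ref{propo:bound_ref}. The only point to make precise is that the key estimate must be stated as in Lemma~\ref{lm:LipR}--$({\rm ii})$ with $\beta=0$, i.e. in the dual norm, $\vvvert(-A)^{-\alpha}\bigl(\RR_n(u_2)-\RR_n(u_1)\bigr)\vvvert_q\le C\Delta T^{1+\alpha-\kappa}\bigl(1+|u_1|_\alpha^2+|u_2|_\alpha^2\bigr)|u_2-u_1|$, rather than in the $H$-norm as first displayed (which would not be provable without regularity of $u_2-u_1$); this is exactly what your remedy of ``keeping the smoothing inside'' amounts to, up to the minor technicalities you omit (the $m=n-1$ term handled by Lemma~\ref{lm:LipR}--$({\rm i})$, the moment-index doubling $q\to 2^k q$, and the conditioning argument of Remark~\ref{rem:condition}).
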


Following the discussion in Section~\ref{sec:cost} concerning the computational cost, Theorem~\ref{theo:error-exp1} shows that applying the parareal algorithm may be used to reduce the computational cost, if the coarse integrator is the exponential Euler scheme, whatever the regularity of the noise.

The second result, Theorem~\ref{theo:error-exp2}, states that the estimate from Theorem~\ref{theo:error-exp2} can be improved when $k\ge 2$. The practical relevance of this result is questionable: it requires $k\ge 2$, whereas it is expected (see Section~\ref{sec:cost}) that choosing $k=1$ is optimal. Nevertheless, the study of the phenomenon stated in Theorem \ref{theo:error-exp2} is motivated by the numerical experiments reported in Section \ref{sec:5-num}, which exhibit that indeed the order of convergence is larger than $(k+1)\min(\al,\frac12)$, namely it is equal at least equal to $(k-1)\min(2\al,\frac12)+2\alpha\le k\min(2\al,\frac12)$.
\begin{theo}\label{theo:error-exp2}
Let $T\in(0,\infty)$, $k\in\N\setminus\{1\}$, $q\in\N$, $\alpha\in(0,\min(\al,\frac12))$, and arbitrarily small $\kappa>0$. For all $u_0\in D((-A)^{\alpha})$, there exists $C_{T,k,q,\alpha,\kappa}(u_0)\in(0,\infty)$ such that, for all $\Delta T\in(0,1)$,
\[
\sup_{n\Delta T\le T}\vvvert \epsilon_n^{(k)}\vvvert_{q}\le C_{T,k,q,\alpha,\kappa}(u_0)\Delta T^{(k-1)(\min(2\alpha,\frac12)-\kappa)+2\alpha}.
\]
\end{theo}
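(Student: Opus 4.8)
The plan is to bootstrap from Theorem~\ref{theo:error-exp1} by a careful analysis of the error recursion~\eqref{eq:error_exp}, exploiting the structure of the residual operator $\RR_m$ when the exponential Euler scheme is the coarse integrator. The starting point is that, with $\hat S_{\Delta T}=e^{\Delta TA}$, one has $\RR_m(u)-\RR_m(v) = \F_m(u)-\F_m(v)-\G_m(u)+\G_m(v)$, and the key observation is that the {\it linear} and {\it stochastic} parts of $\F_m$ and $\G_m$ coincide (they are both built from the exponential integrator), so $\RR_m(u)-\RR_m(v)$ depends only on the nonlinearity $F$ and, moreover, is a {\it second-order difference}: heuristically $\RR_m(u)-\RR_m(v) \approx \Delta T\,(e^{\Delta TA}-I)\,[\mathrm{stuff}] + O(\Delta T^2)$, so that acting on an increment of size $|u-v|_{-\alpha}$ it gains a factor $\Delta T^{\min(2\alpha,\frac12)}$ rather than just $\Delta T^{\alpha}$. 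This ``double gain'' on the residual term — combined with the fact that for $k=0$ the error already has order $2\alpha$ coming from the additive noise structure (no nonlinear interaction has yet entered the recursion) — is what produces the exponent $(k-1)\min(2\alpha,\frac12) + 2\alpha$ instead of $(k+1)\alpha$.

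Concretely, the key steps are as follows. First, I would establish a refined residual estimate: for $u_1,u_2$ with bounded moments in $D((-A)^{\alpha+\kappa})$,
\[
\vvvert \RR_m(u_1)-\RR_m(u_2)\vvvert_q \le C\,\Delta T^{1+\min(2\alpha,\frac12)-\kappa}\,\bigl(1 + \vvvert u_1\vvvert_{q',\alpha+\kappa} + \vvvert u_2\vvvert_{q',\alpha+\kappa}\bigr)\,\vvvert u_1-u_2\vvvert_{q'',-\alpha+\mathrm{(correction)}},
\]
using Assumption~\ref{ass:F} (the third, Lipschitz-type bound on $DF$) together with Proposition~\ref{propo:A1} to extract the extra $\Delta T^{\min(2\alpha,\frac12)}$ from the factor $e^{\Delta TA}-I$ and the telescoping over the $J$ fine steps; this is essentially a quantitative version of the auxiliary lemmas announced for Section~\ref{sec:5-proof-aux}. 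Second, I would establish the base case: for $k=0$, $\vvvert\epsilon_n^{(0)}\vvvert_q \le C\Delta T^{2\alpha-\kappa}$ — this follows from the additive-noise structure since $\epsilon_n^{(0)}$ is the difference of the coarse and fine propagations of the {\it same} noise and {\it same} (deterministic-plus-fixed) data, so only the nonlinearity-free stochastic-convolution discretization error enters, which is $O(\Delta T^{2\alpha})$ by the same computation as in the proof of Theorem~\ref{theo:error-implicit} but with $S_{\Delta T}$ replaced by $e^{\Delta TA}$ (giving an even better term, so the genuine obstruction is the nonlinear part). Third, I would run the induction on $k$: plug the inductive hypothesis $\vvvert\epsilon_m^{(k-1)}\vvvert_q \le C\Delta T^{a_{k-1}}$ (with $a_{k-1}=(k-2)\min(2\alpha,\frac12)+2\alpha$) into~\eqref{eq:error_exp}; the first (nonlinear Duhamel) sum is handled by a discrete Gronwall argument and contributes at worst $\Delta T^{a_k}$ since $\epsilon^{(k+1)}$ appears with a full $\Delta T$ weight; the second (residual) sum contributes $\Delta T^{\min(2\alpha,\frac12)-\kappa} \cdot \Delta T^{a_{k-1}} = \Delta T^{a_k}$ by the refined residual estimate of Step~1, after absorbing the discrete sum $\sum_m \|e^{(n-1-m)\Delta TA}\|$ and using the moment bounds of Proposition~\ref{propo:bound_num}. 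Here one must be careful that the refined residual estimate needs the error measured in a {\it negative} fractional norm $|\cdot|_{-\alpha}$ to harvest the smoothing, so I would also need to propagate a bound on $\vvvert\epsilon_m^{(k-1)}\vvvert_{q,-\alpha}$, or equivalently run the whole induction simultaneously in two norms (the $L^2_\Omega H$ norm and a negative-order norm), as is standard for this type of SPDE estimate.

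The main obstacle I anticipate is Step~1: correctly accounting for the interplay between the telescoping over the $J$ fine sub-steps inside $\F_m$ and the single coarse step in $\G_m$, while simultaneously extracting the $\min(2\alpha,\frac12)$ regularity gain {\it and} keeping track of which (positive or negative) fractional norm the various factors live in. In particular, the regularity parameter $\alpha$ is bounded by $\frac12$, so the gain from $e^{\Delta TA}-I$ is capped at $\Delta T^{\frac12}$ by Proposition~\ref{propo:A1}, which is exactly the source of the $\min(2\alpha,\frac12)$ term; getting the $\kappa$-losses to be harmless (they should only affect the constant, not compound over the $k$ iterations) requires choosing the auxiliary fractional exponents with a little room to spare and is the kind of bookkeeping that is easy to get slightly wrong. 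A secondary subtlety is the requirement $k\ge 2$: for $k=1$ the residual term is fed by $\epsilon^{(0)}$, whose improved $2\alpha$ bound is already used up, so one does not get the compounding — this is why the statement excludes $k=1$, and the induction must be set up to make that boundary effect transparent.
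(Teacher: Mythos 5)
Your overall strategy (bootstrap from Theorem~\ref{theo:error-exp1}, refined Lipschitz estimate for the residual with gain $\min(2\alpha,\tfrac12)$, Gronwall plus induction in $k$, two-norm bookkeeping) matches the paper, but two of your load-bearing steps are wrong. First, the base case: you claim $\vvvert\epsilon_n^{(0)}\vvvert_q\le C\Delta T^{2\alpha-\kappa}$ on the grounds that the stochastic part cancels between the coarse and fine propagation of the same noise. It does not: even with $F=0$, the coarse step applies $e^{\Delta TA}$ to the whole increment $W^Q(t_{n+1})-W^Q(t_n)$, while the fine integrator applies $e^{(J-j)\delta tA}$ to each sub-increment, so the stochastic-convolution discretization error survives at $k=0$ and is of order $\al$ (essentially), not $2\al$ — this is exactly why Theorem~\ref{theo:error-exp1} gives only order $\alpha$ at $k=0$, and the paper's numerics (Figure~\ref{fig:k01}) confirm that order is sharp. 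With your base case the induction would yield $\Delta T^{2\alpha+k\min(2\alpha,\frac12)}$, strictly stronger than the stated theorem and contradicted at $k=0,1$. The paper instead starts the improved recursion at $k=1$: its Step~1 shows that for $k\ge1$ (noise cancels in~\eqref{eq:error_exp}, so this fails at $k=0$) the error in the \emph{positive} norm satisfies $\vvvert\epsilon_n^{(k)}\vvvert_{q,\alpha}\le C\Delta T^{(k+1)(\alpha-\kappa)}$, and the improved recursion is then anchored at $\vvvert\epsilon^{(1)}\vvvert_{q,\alpha}\lesssim\Delta T^{2(\alpha-\kappa)}$ — that, not a ``used up'' bound on $\epsilon^{(0)}$, is the real reason for the restriction $k\ge2$.

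Second, your refined residual estimate is mis-structured: you propose $\vvvert\RR_m(u_1)-\RR_m(u_2)\vvvert_q\lesssim\Delta T^{1+\min(2\alpha,\frac12)-\kappa}\vvvert u_1-u_2\vvvert_{-\alpha+\cdots}$ and accordingly plan to propagate negative-norm bounds $\vvvert\epsilon_m^{(k-1)}\vvvert_{q,-\alpha}$. Measuring the increment in a \emph{weaker} norm cannot buy extra powers of $\Delta T$; the mechanism in the paper's Lemma~\ref{lm:LipR}--$({\rm ii})$ is the opposite: the output is weakened by $(-A)^{-\alpha}$ (one factor $\alpha$, later recovered by semigroup smoothing in the Duhamel sum) \emph{and} the increment must be measured in the positive norm $|\cdot|_\beta$ with $\beta=\alpha-\kappa$ (the second factor), giving $\Delta T^{1+\min(\alpha+\beta,\frac12)-\kappa}$; with increments only in $H$ ($\beta=0$) the gain is $\alpha$, which is precisely Theorem~\ref{theo:error-exp1}, and with increments in $|\cdot|_{-\alpha}$ you get nothing better. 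Consequently the induction must propagate $\vvvert\epsilon_m^{(k)}\vvvert_{q,\alpha}$ (this is what Step~1 provides and what the $\alpha$-norm Gronwall recursion in the paper's Step~2 runs on), not a negative-order norm; your two-norm scheme would not close. Finally, the cap $\min(2\alpha,\tfrac12)$ is not due to Proposition~\ref{propo:A1} (which smooths up to exponent $1$): it comes from the temporal regularity of the fine sub-trajectory inside the residual, namely $\vvvert(-A)^{-\beta-\kappa}(v_{n,j}-v_{n,0})\vvvert_q\lesssim (j\delta t)^{\min(\alpha+\beta+\kappa,\frac12)}$, which saturates at exponent $\tfrac12$ because the stochastic convolution has at most $\tfrac12$ time-H\"older regularity no matter how much spatial weakening is applied.
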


The proofs of Proposition~\ref{propo:bound_num} and Theorems~\ref{theo:error-exp1} and~\ref{theo:error-exp2} are postponed to Section~\ref{sec:5-proof}.

To simplify the exposition, the way the constants $C_{T,k,q,\alpha,\kappa}(u_0)$ above depend on $|u_0|_\alpha$ is not made precise.

\begin{rem}
Theorem~\ref{theo:error-exp1} (and Lemma~\ref{lm:LipR}--$({\rm i})$) can be proved under less restrictive conditions on the linear operator $F$, instead of Assumption~\ref{ass:F}: there exists $\eta\in(0,\frac12)$ such that one has estimates of the type
\[
|DF(u).h|_{-\eta-\alpha}\le C_{F,\eta,\alpha,\kappa}\bigl(1+|u|_{\alpha+\kappa}\bigr)|h|_{-\alpha},
\]
and
\[
|D^2F(u).(h_1,h_2)|_{-\eta}\le C_{F,\eta}|h_1||h_2|.
\]
This setting encompasses the case of Nemytskii operators, with $\eta\in(\frac14,\frac12)$.
\end{rem}

\subsection{Numerical experiments}\label{sec:5-num}

The objective of this section is to illustrate Theorem~\ref{theo:error-exp1} and~\ref{theo:error-exp2}.

The SPDE~\eqref{eq:SPDE}, with nonlinear operator $F(u)=5\cos(u)$ and initial condition $u(0)=0$, is considered, with covariance operator given by $Qe_p=\gamma_pe_p$, with $\gamma_p=\lambda_p^{\frac12-2\al}$. Spatial discretization is performed using finite differences, with mesh size $h=0.01$. In addition, the noise is truncated, {\it i.e.} the $Q$-Wiener process $W^Q(t)$ is replaced by $\sum_{p=1}^{P}\gamma_p^{\frac12}\beta_p(t)e_p$, with $P=100$. Numerical parameters are chosen as follows: the final time is $T=1$, the fine time-step size is $\delta t=2^{-15}$, and the coarse time-step size is $\Delta T=J\delta t$ with $J=2^{j}$, $j=5,\cdots,10$. An average over $M=100$ independent Monte-Carlo samples is used to approximate the expectations.

\begin{figure}[h]
\centering
  \includegraphics[width=7.6cm]{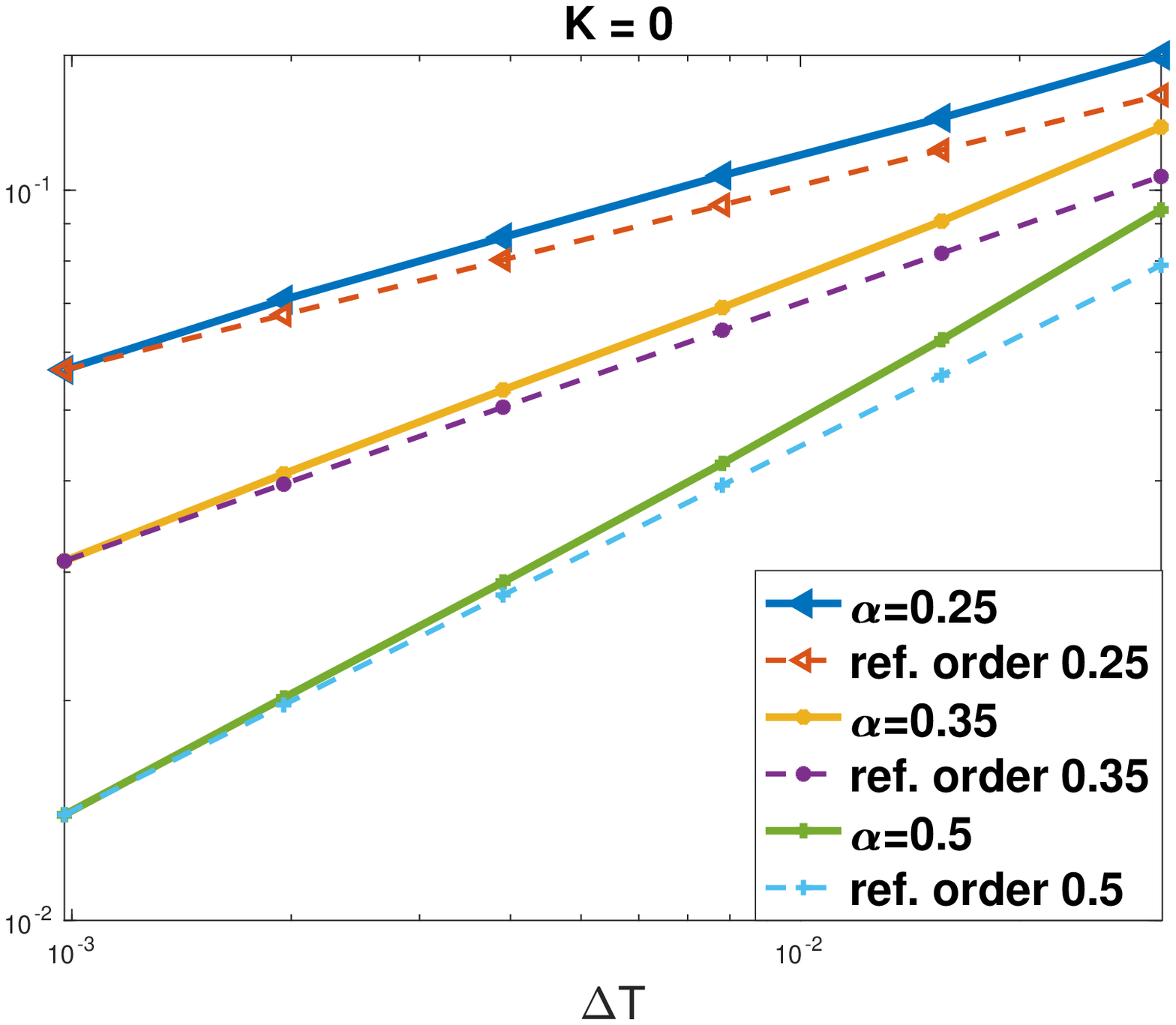}
  \includegraphics[width=7.6cm]{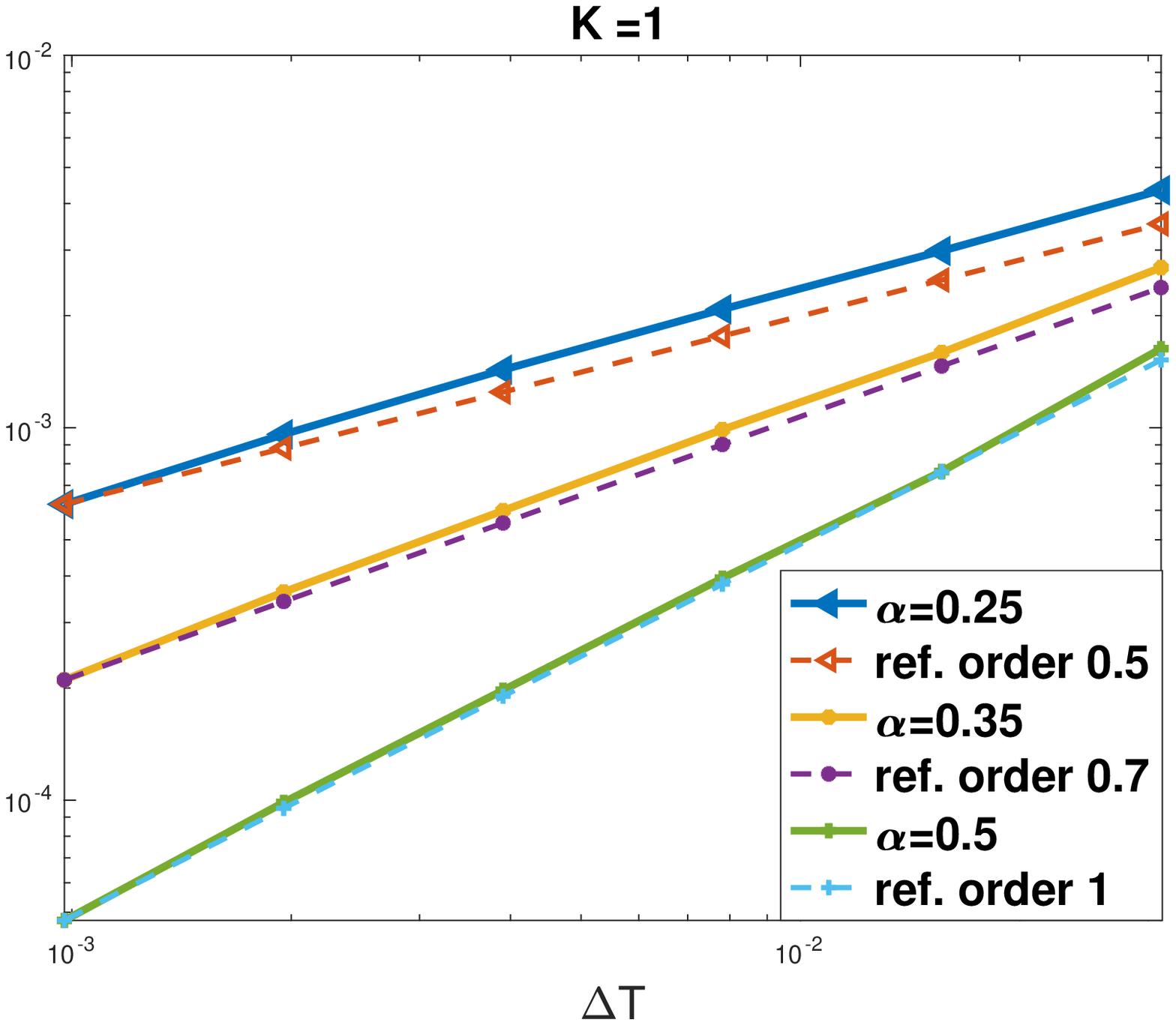}
  \caption{
Orders of convergence of the error with respect to $\Delta T$, for $K=0$ (left) and $K=1$ (right), for different values of $\alpha\in\{0.25,0.35,0.5\}$, in the exponential Euler scheme case.
}
  \label{fig:k01}
\end{figure}

\begin{figure}[h]
\centering
  \includegraphics[width=7.6cm]{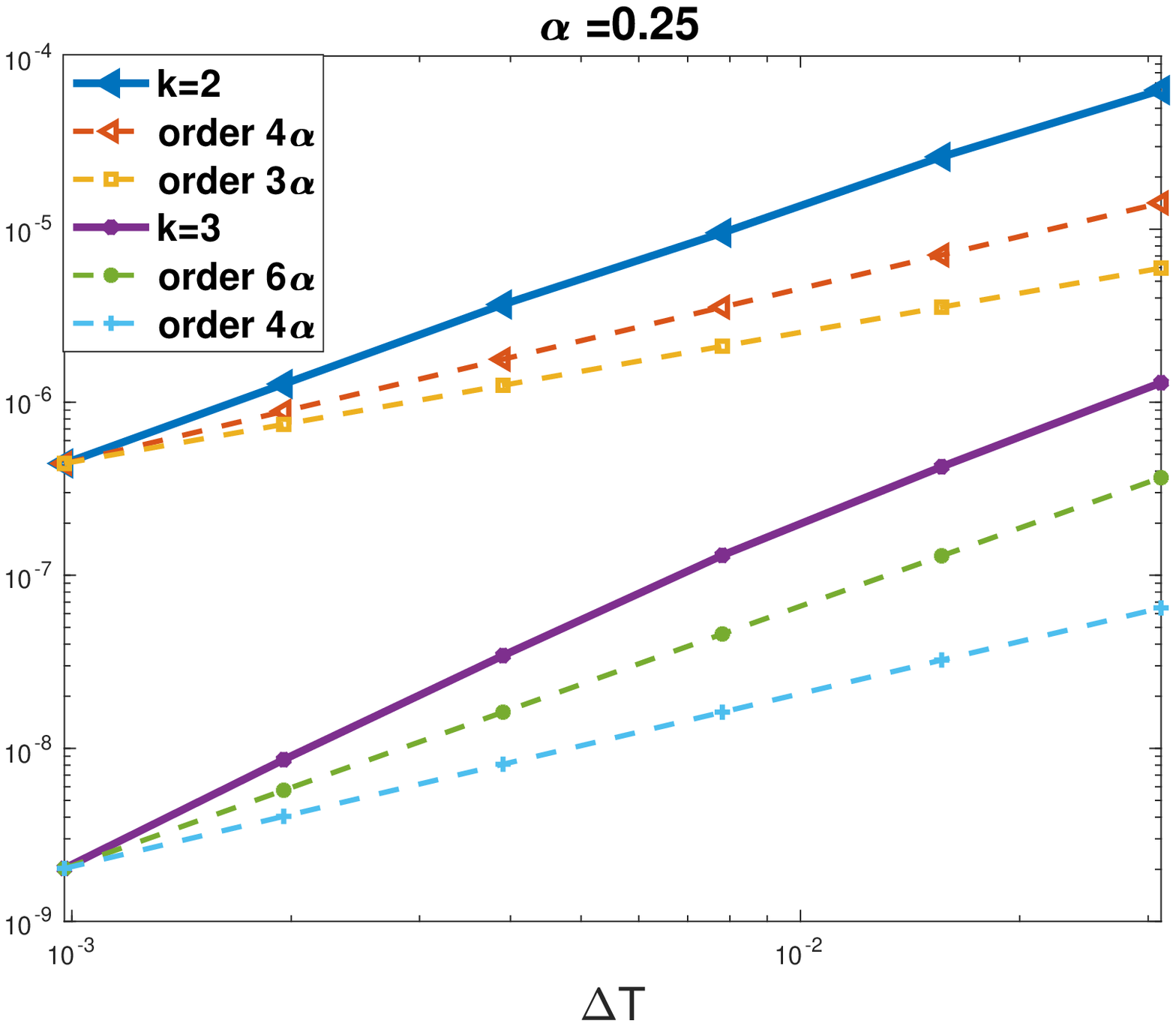}
  \includegraphics[width=7.6cm]{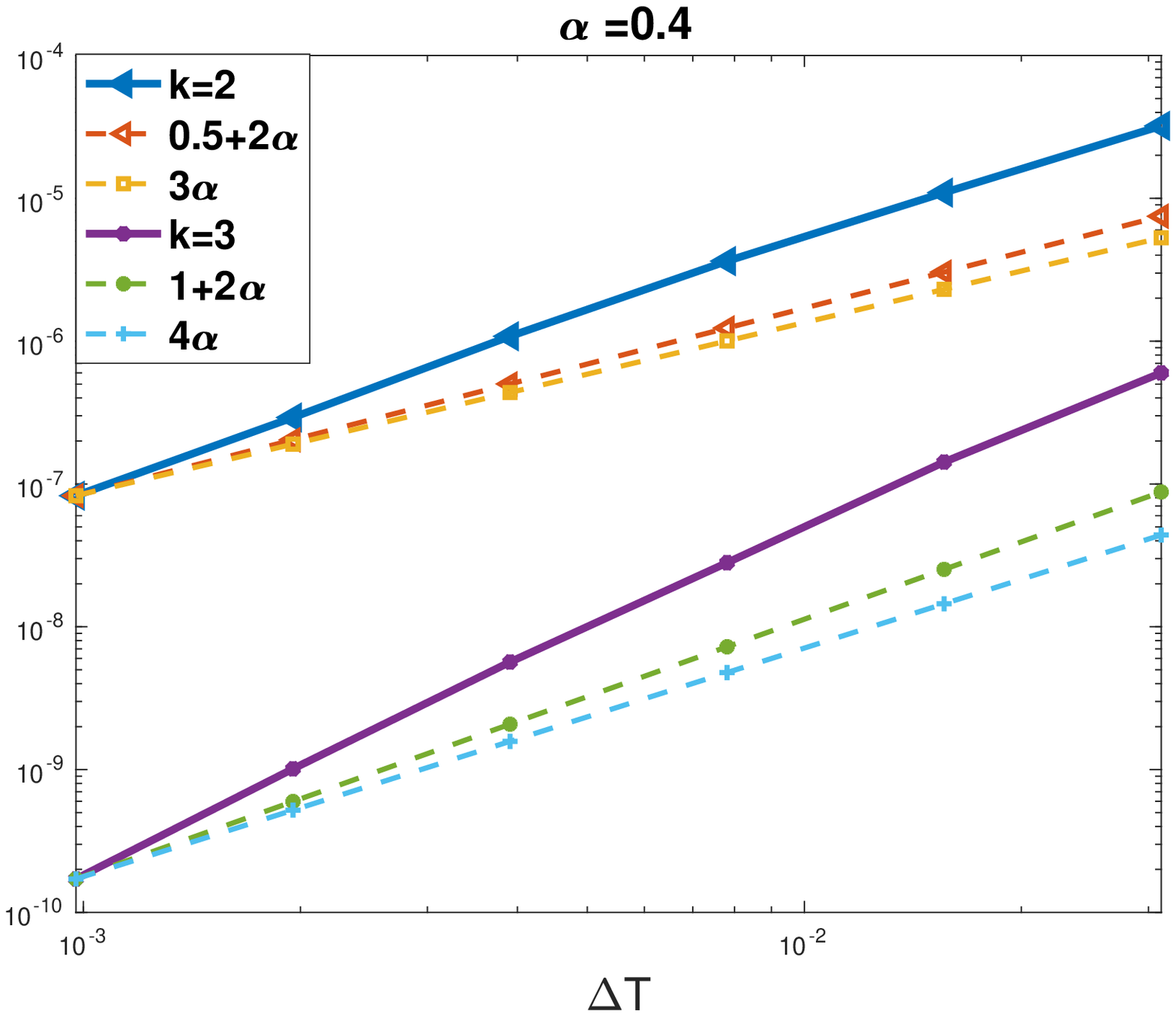}
  \caption{
Orders of convergence of the error with respect to $\Delta T$, for $\alpha=0.25$ (left) and $\alpha=0.4$ (right), for different values of $k\in\{2,3,4\}$, in the exponential Euler scheme case.
}
  \label{fig:k23}
\end{figure}

Figure~\ref{fig:k01} demonstrates that the orders of convergence in Theorem~\ref{theo:error-exp1} are sharp when $k=0$ and $k=1$, for different values of $\al$.

Figure~\ref{fig:k23} then illustrates that for $k\ge 2$, the orders of convergence in Theorem~\ref{theo:error-exp1} are not sharp for $\al=0.25$ and $\al=0.4$. In fact, even the improved error estimates from Theorem~\ref{theo:error-exp2} seem to be sub-optimal in the numerical experiment, especially when $\al=0.4$ (figure on the right) -- in this case $\min(2\alpha,\frac12)=\frac12$. When $\al=0.25$ (figure on the left), the improved theoretical rate in Theorem \ref{theo:error-exp2} is closer to the observed numerical rate.

\subsection{Proof of error estimates}\label{sec:5-proof}

The objective of this section is to provide proofs for the results stated in Section~\ref{sec:5-error}. In fact, they are all based on the following Lemma concerning Lipschitz continuity properties, in appropriate norms, of the residual operators.

\begin{lemma}\label{lm:LipR}
Let Assumptions \ref{ass:F} and \ref{ass:Q} hold, and $\alpha\in[0,1)$.

$({\rm i})$ There exists $C_{F,\alpha}\in(0,\infty)$ such that for all $\Delta T\in(0,1]$ and all $u_1,u_2\in H$, one has
\[
\underset{n\Delta T\le T}{\sup}|\RR_n(u_2)-\RR_n(u_1)|_\alpha\le C_{F,q,\alpha} \Delta T^{1-\alpha}|u_2-u_1|.
\]

$({\rm ii})$ For all $q\in\N$, $\alpha\in(0,\min(\al,\frac12))$, $\beta\in[0,\alpha)$ and arbitrarily small $\kappa\in\bigl(0,\min(\alpha-\beta,1-\alpha-\beta)\bigr)$, there exists $C_{T,q,\alpha,\beta,\kappa}\in(0,\infty)$ such that for all $\Delta T\in(0,1)$ and all $u_1,u_2\in D((-A)^{\alpha})$, one has
\begin{equation*}
\vvvert(-A)^{-\alpha}\bigl(\RR_n(u_2)-\RR_n(u_1)\bigr)\vvvert_q\le C_{T,q,\alpha,\beta,\kappa}\Delta T^{1+\min\bigl(\alpha+\beta,\frac12\bigr)-\kappa}\bigl(1+|u_1|_{\alpha}^2+|u_2|_{\alpha}^2\bigr)|u_2-u_1|_{\beta}.
\end{equation*}

\end{lemma}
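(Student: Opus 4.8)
The plan is to write out the residual operator $\RR_n(u) = \F_n(u) - \G_n(u)$ explicitly, observe that the noise terms cancel in the difference $\RR_n(u_2) - \RR_n(u_1)$ (since noise is additive and enters both $\F_n$ and $\G_n$ linearly through the same semigroup-weighted increments), and then reduce everything to estimating the difference of two deterministic-type quadrature approximations of $\int_{t_n}^{t_{n+1}} e^{(t_{n+1}-s)A}\bigl(F(\cdot)-F(\cdot)\bigr)\,ds$-type expressions. Concretely, with $\hat S_{\Delta T}=e^{\Delta T A}$, one has $\G_n(u) = e^{\Delta T A}u + \Delta T e^{\Delta T A}F(u) + (\text{noise})$, while unrolling the fine recursion gives $\F_n(u) = e^{\Delta T A}u + \delta t\sum_{j=0}^{J-1} e^{(J-j)\delta t A}F(v_{n,j}) + (\text{same noise})$, where $(v_{n,j})$ is the fine trajectory started at $u$. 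Hence $\RR_n(u) = \delta t\sum_{j=0}^{J-1} e^{(J-j)\delta t A}F(v_{n,j}) - \Delta T e^{\Delta T A}F(u)$, which is purely a function of $u$ (and the noise path, through the $v_{n,j}$'s for $j\ge 1$), with the linear parts having cancelled.

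For part $({\rm i})$, I would bound $\RR_n(u)$ as a sum of two ``quadrature-like'' terms, each of the form $\Delta T$ times $e^{(\ge 0)\delta t A}$ applied to $F$ of something. Taking the difference at $u_1,u_2$: the term $\Delta T e^{\Delta T A}(F(u_2)-F(u_1))$ is handled directly by the global Lipschitz bound on $F$ and the bound $\|(-A)^\alpha e^{\Delta T A}\|_{\mathcal L(H)}\le C_\alpha \Delta T^{-\alpha}$ from Proposition~\ref{propo:A1}, giving $\Delta T^{1-\alpha}|u_2-u_1|$. For the sum $\delta t\sum_j e^{(J-j)\delta t A}(F(v_{n,j}^{(2)})-F(v_{n,j}^{(1)}))$ one needs a discrete stability estimate: $|v_{n,j}^{(2)}-v_{n,j}^{(1)}|\le C|u_2-u_1|$ uniformly, which follows by a Gronwall-type argument on the fine recursion using $\|e^{\delta t A}\|_{\mathcal L(H)}\le 1$ and Lipschitz-$F$; then applying $\|(-A)^\alpha e^{(J-j)\delta t A}\|\le C_\alpha((J-j)\delta t)^{-\alpha}$ and summing $\delta t\sum_j ((J-j)\delta t)^{-\alpha}\le C\Delta T^{1-\alpha}$ (a Riemann-sum bound, finite since $\alpha<1$) closes it.

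Part $({\rm ii})$ is the main obstacle: it asks for an extra order $\min(\alpha+\beta,\tfrac12)-\kappa$ beyond the trivial $\Delta T^1$, measuring the residual in the weaker norm $|\cdot|_{-\alpha}$ and the input difference in the $|\cdot|_\beta$ norm. The key is that $\RR_n(u_2)-\RR_n(u_1)$ is, up to the one-step quadrature error, a telescoping/consistency-type quantity: one should add and subtract the ``exact'' integral $\int_0^{\Delta T} e^{(\Delta T-s)A}(F(\tilde u_2(s))-F(\tilde u_1(s)))\,ds$ where $\tilde u_i$ solves the linear-plus-noise equation exactly from $u_i$, exploiting that both $\F_n$ and $\G_n$ are consistent discretizations of this integral to first order. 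The difference $F(v_{n,j}^{(2)})-F(v_{n,j}^{(1)})$ should be expanded via the fundamental theorem of calculus as $\int_0^1 DF(\ldots).(v_{n,j}^{(2)}-v_{n,j}^{(1)})\,d\theta$, and then the refined bounds on $DF$ from Assumption~\ref{ass:F} (mapping $|\cdot|_\beta \to |\cdot|_{\alpha-\kappa}$-type, and $|\cdot|_{-\alpha}\to|\cdot|_{-\alpha}$-type) together with the quadratic factor $(1+|u_1|_\alpha^2+|u_2|_\alpha^2)$ (needed because the $DF$ estimates carry a factor $1+|u|_{\alpha+\kappa}$ and one must control $|v_{n,j}^{(i)}|_{\alpha+\kappa}$ via Proposition~\ref{propo:bound_num}-type bounds — though here stated for deterministic inputs, one re-derives it for the fine scheme started at a fixed $u_i\in D((-A)^\alpha)$, picking up $|u_i|_\alpha$, hence the square) let one trade regularity for powers of $\Delta T$. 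The consistency gain of order $\min(\alpha+\beta,\tfrac12)$ comes from the standard exponential-Euler quadrature analysis: the error between $\delta t\sum_j e^{(J-j)\delta t A}g(t_{n,j})$ and $\int e^{(\Delta T-s)A}g(s)\,ds$ is of order $\delta t^{\min(\gamma,1)}$ when $g$ is $\gamma$-Hölder-like with values in an appropriate negative-order space, and here $g$ inherits Hölder exponent essentially $\alpha+\beta\wedge\tfrac12$ from the temporal regularity of $v_{n,j}^{(i)}$ (Proposition~\ref{propo:bound_ref}-type estimates, re-proved for the fine scheme). Assembling these — cancellation of noise, FTC expansion of $F$-differences, discrete stability and regularity of the fine trajectory, the refined $DF$-bounds of Assumption~\ref{ass:F}, and a Riemann-sum/consistency estimate for the exponential-Euler quadrature in fractional norms — with careful bookkeeping of the $\kappa$ losses (arising each time one uses $|DF(u).h|_{\alpha-\kappa}$ in place of the borderline $|\cdot|_\alpha$) yields the claimed exponent $1+\min(\alpha+\beta,\tfrac12)-\kappa$.
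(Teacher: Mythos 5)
Your part $({\rm i})$ is correct and is essentially the paper's own argument: after cancellation of the $e^{\Delta TA}u$ terms, you control the difference of the fine trajectories by a discrete Gronwall estimate (the paper does the equivalent step for the derivative flow, $\sup_j|\eta_{n,j}^h|\le C|h|$), and you conclude with the smoothing bound of Proposition~\ref{propo:A1} and the Riemann sum $\delta t\sum_{j}\bigl((J-j)\delta t\bigr)^{-\alpha}\le C_\alpha\Delta T^{1-\alpha}$. For part $({\rm ii})$, your plan contains the right ingredients but is organized differently and stops at the level of a strategy. The paper bounds the derivative $D\RR_n(u).h$ in the $|\cdot|_{-\alpha}$ norm and splits it into three discrete terms: the quadrature-type difference $\bigl(\sum_j\delta t e^{(J-j)\delta tA}-\Delta Te^{\Delta TA}\bigr)DF(u).h$, the term with the increments $\eta_{n,j}^h-h$, and the term $\bigl(DF(v_{n,j})-DF(u)\bigr).\eta_{n,j}^h$; the first two use only the first two estimates of Assumption~\ref{ass:F} together with semigroup differences and give $\Delta T^{1+\alpha+\beta-\kappa}$, while the cap $\min(\cdot,\frac12)$, the H\"older inequality, and the quadratic factor all come from the third term, through the $DF$-difference estimate of Assumption~\ref{ass:F} combined with the temporal regularity of the fine trajectory measured in the $|\cdot|_{-\beta-\kappa}$ norm, of order $(j\delta t)^{\min(\alpha+\beta+\kappa,\frac12)}(1+|u|_\alpha)$. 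Your route, inserting the exact linear-plus-noise flow and comparing both integrators to the corresponding integral, can be made to close at the same rate: the correction incurred by replacing the fine trajectory difference $v_{n,j}^{(2)}-v_{n,j}^{(1)}$ by $e^{j\delta tA}(u_2-u_1)$ is of order $\Delta T^{2}|u_2-u_1|$, hence higher order; what each approach buys is that the paper's derivative-based, purely discrete decomposition avoids the extra intermediate object and the attendant bookkeeping.

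Three points in your sketch need repair before it is a proof. First, the noise contributions to $\F_n$ and $\G_n$ are not the same ($\sum_j e^{(J-j)\delta tA}\bigl(W^Q(t_{n,j+1})-W^Q(t_{n,j})\bigr)$ versus $e^{\Delta TA}\bigl(W^Q(t_{n+1})-W^Q(t_n)\bigr)$), so your displayed formula for $\RR_n(u)$ is incorrect; the noise discrepancy is independent of $u$ and cancels in $\RR_n(u_2)-\RR_n(u_1)$, which is all the lemma needs, but the statement as written should be corrected. Second, the rate-determining consistency error is that of the coarse one-point quadrature over $[t_n,t_{n+1}]$, of order $\Delta T^{1+\gamma}$; the fine quadrature error "of order $\delta t^{\gamma}$'' that your sketch invokes is the smaller contribution, and the final bound must be expressed in powers of $\Delta T$ (in the paper this is simply $(j\delta t)^{\gamma}\le\Delta T^{\gamma}$). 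Third, the H\"older-in-time control of the $F$-differences along the trajectories, in the $|\cdot|_{-\alpha}$ norm, requires specifically the third estimate of Assumption~\ref{ass:F} (Lipschitz continuity of $u\mapsto DF(u)$ in negative norms); the two $DF$ mapping properties you cite are not sufficient, and it is exactly this third estimate that produces both the exponent capped at $\frac12$ and the factor $\bigl(1+|u_1|_\alpha^2+|u_2|_\alpha^2\bigr)$.
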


The proof of Lemma~\ref{lm:LipR} is postponed to Section~\ref{sec:5-proof-aux}.

Observe that Statement $({\rm i})$ (in the case $\alpha=0$) is not sufficient to exhibit a positive rate of convergence $\gamma$ for the error $\epsilon_{n}^{(k)}$, in terms of $\Delta T$. Indeed, that would required the Lipschitz constant of the residual operator to be of size $\Delta T^{1+\gamma}$. Statement $({\rm i})$, with $\alpha=0$, is used only to establish stability properties of the algorithm, {\it i.e.} moment bounds for $u_n^{(k)}$ or to apply a Gronwall lemma. With $\alpha>0$, this statement is employed to establish moment bounds in $|\cdot|_{\alpha}$ norms, and to deal with some terms for which regularization properties of the semigroup cannot be used.

Statement $({\rm ii})$ is the key result in order to get the rates of convergence given in Theorem~\ref{theo:error-exp1} and~\ref{theo:error-exp2}. It requires to choose appropriate norms to analyze the Lipschitz constant of the residual operators: estimation in a weaker norm for $\alpha>0$, or with higher regularity for $\beta>0$. More precisely, Theorem~\ref{theo:error-exp1} follows from Statement $({\rm ii})$ with $\beta=0$, and then Theorem~\ref{theo:error-exp2} follows from Statement $({\rm ii})$ with $\beta>0$.

Finally, observe that the need to choose appropriate norms is specific to the infinite dimensional situation. Moreover, Lemma~\ref{lm:LipR} is not satisfied if the linear implicit Euler scheme is chosen as the coarse integrator, even if $F=0$: indeed, in that case, $\RR_n(u_2)-\RR_n(u_1)=\bigl(S_{\Delta T}-e^{\Delta T}\bigr)(u_2-u_1)$, and $\|S_{\Delta T}-e^{\Delta T}\|_{\mathcal{L}(H)}$ does not converge to $0$ as $\Delta T\to 0$. This observation explains why the behaviors and the analysis of the parareal algorithm depends a lot on the choice of the coarse integrator for parabolic semilinear SPDEs.

\begin{rem}\label{rem:condition}
Note that the estimates stated in Lemma~\ref{lm:LipR} require $u_1$ and $u_2$ to be deterministic, however below they are applied to random elements, which are measurable with respect to $\sigma\bigl(W(t);t\le t_n\bigr)$, whereas the noise component in the residual operator $\RR_n$ is measurable with respect to $\sigma\bigl(W(t); t_n\le t\le t_{n+1}\bigr)$. Applying a straightforward conditioning argument, and interpreting the expectation in Lemma~\ref{lm:LipR} as a conditional expectation, yield the required estimates below.
\end{rem}

\begin{proof}[Proof of Proposition~\ref{propo:bound_num}]
First, moment bounds for the reference solution $u_n^{\rm ref}$ are provided by Proposition~\ref{propo:bound_ref}. In addition, in this section, the initialization step of the parareal algorithm consists in applying the fine integrator (which is the exponential Euler scheme) with time-step size $\Delta T$, thus the result of Proposition~\ref{propo:bound_ref} also applies to obtain moment bounds for $u_n^{(0)}$.

Since $u_n^{(k)}=\epsilon_n^{(k)}+u_{n}^{\rm ref}$, it thus only remains to prove moment bounds for $\epsilon_{n}^{(k)}$, when $k\ge 1$.

Using the expression of the error~\eqref{eq:error_exp}, and the global Lipschitz continuity of $F$, moments are treated as follows: for all $q\in\N$ and $\alpha\in[0,1)$,
\begin{align*}
\vvvert \epsilon_n^{(k+1)}\vvvert_{q,\alpha}
\le& \Delta T\sum_{m=0}^{n-1}\|(-A)^{\alpha}e^{(n-m)\Delta T A}\|_{\mathcal{L}(H)}\vvvert \epsilon_m^{(k+1)}\vvvert_{q,0}\\ 
&+\sum_{m=0}^{n-2}\|(-A)^{\alpha}e^{(n-1-m)\Delta T A}\|_{\mathcal{L}(H)}\vvvert \RR_m(u_m^{(k)})-\RR_m(u_m^{\rm ref})\vvvert_{q,0}\\
&+\vvvert (-A)^{\alpha}\bigl[\RR_{n-1}(u_{n-1}^{(k)})-\RR_{n-1}(u_{n-1}^{\rm ref})\bigr]\vvvert_{q,0}.
\end{align*}
Assume first that $\alpha=0$. Then the claim follows from the application of Lemma~\ref{lm:LipR}--$({\rm i})$, the use of the discrete Gronwall Lemma, and from the use of a recursion argument with respect to $k$. When $\alpha>0$, it remains to apply Lemma~\ref{lm:LipR}--$({\rm i})$, and to apply the regularization estimate from Proposition~\ref{propo:A1} to conclude the proof.
\end{proof}

\begin{proof}[Proof of Theorem~\ref{theo:error-exp1}]

First, assume that $k=0$. The claim then follows from Proposition~\ref{propo:bound_ref}, since both the coarse and the fine integrators are given by the exponential Euler scheme, thus $\epsilon_n^{(k)}=\bigl(u_n^{(k)}-u(t_n)\bigr)-\bigl(u_n^{\rm ref}-u(t_n)\bigr)$, and the initialization of the parareal algorithm consists in applying the coarse integrator.

Let now $k\ge 1$. Owing to the expression of the error~\eqref{eq:error_exp}, applying Lemma~\ref{lm:LipR}--$({\rm ii})$ with $\beta=0$ and H\"older inequality (see Remark~\ref{rem:condition} for the conditioning argument), one obtains
\begin{align*}
\vvvert \epsilon_n^{(k)}\vvvert_q
\le& C\Delta T\sum_{m=0}^{n-1}\vvvert \epsilon_m^{(k)}\vvvert_{q}\\
&+\sum_{m=0}^{n-2}\|(-A)^{\alpha}e^{(n-1-m)\Delta TA}\|_{\mathcal{L}(H)}\vvvert(-A)^{-\alpha}[\RR(u_m^{(k-1)})-\RR(u_m^{\rm ref})]\vvvert_q\\
&+\vvvert \RR_{n-1}(u_{n-1}^{(k-1)})-\RR_{n-1}(u_{n-1}^{\rm ref})\vvvert_q\\
&\le C\Delta T\sum_{m=0}^{n-1}\vvvert \epsilon_m^{(k)}\vvvert_{q}+C\Delta T\vvvert \epsilon_{n-1}^{(k-1)}\vvvert_q\\
&+C\Delta T^{1+\alpha-\kappa}\sum_{m=0}^{n-2}\frac{1}{((n-m-1)\Delta T)^{\alpha}}\vvvert \epsilon_m^{(k-1)}\vvvert_{2q}(1+\vvvert u_m^{(k-1)}\vvvert_{4q,\alpha}^2+\vvvert u_m^{\rm ref}\vvvert_{4q,\alpha}^2).
\end{align*}
Due to the moment estimates of Proposition~\ref{propo:bound_num}, applying the discrete Gronwall lemma yields
\[
\sup_{n\Delta T\le T}\vvvert \epsilon_n^{(k)}\vvvert_q\le C_{T,q,\alpha,\kappa}(u_0)\Delta T^{\alpha-\kappa}\sup_{n\Delta T\le T}\vvvert \epsilon_n^{(k-1)}\vvvert_{2q}.
\]
Then a straightforward recursion argument yields
\begin{align*}
\sup_{n\Delta T\le T}\vvvert \epsilon_n^{(k)}\vvvert_q&\le C_{T,k,q,\alpha,\kappa}(u_0)\Delta T^{k(\alpha-\kappa)}\sup_{n\Delta T\le T}\vvvert \epsilon_n^{(0)}\vvvert_{2^kq}\\
&\le C_{T,k,q,\alpha,\kappa}(u_0)\Delta T^{(k+1)(\alpha-\kappa)},
\end{align*}
owiing to the result when $k=0$. This concludes the proof of Theorem~\ref{theo:error-exp1}.
\end{proof}

\begin{proof}[Proof of Theorem~\ref{theo:error-exp2}]
The proof consists of two steps.

{\bf Step~$1$.} If $k\ge 1$, the error evaluated in the $|\cdot|_\alpha$ norm is also of the order $(k+1)(\alpha-\kappa)$:
\[
\sup_{n\Delta T\le T}\vvvert \epsilon_n^{(k)}\vvvert_{q,\alpha}\le C_{T,k,q,\alpha,\kappa}(u_0)\Delta T^{(k+1)(\alpha-\kappa)}.
\]

Indeed, using the expression of the error~\eqref{eq:error_exp}, Proposition~\ref{propo:A1}, and the two statements of Lemma~\ref{lm:LipR}, one obtains
\begin{align*}
\vvvert \epsilon_n^{(k)}\vvvert_{q,\alpha}
\le &\Delta T\sum_{m=0}^{n-1}\|(-A)^{\alpha}e^{(n-m)\Delta T A}\|_{\mathcal{L}(H)}\vvvert \epsilon_m^{(k)}\vvvert_{q}\\ 
&+\sum_{m=0}^{n-2}\|(-A)^{2\alpha}e^{(n-1-m)\Delta T A}\|_{\mathcal{L}(H)}\vvvert(-A)^{-\alpha}[\RR_m(u_m^{(k-1)})-\RR_m(u_m^{\rm ref})]\vvvert_{q}\\
&+\vvvert (-A)^{\alpha}\bigl[\RR_{n-1}(u_{n-1}^{(k-1)})-\RR_{n-1}(u_{n-1}^{\rm ref})\bigr]\vvvert_{q}\\
& \le C\left(\Delta T\sum_{m=0}^{n-1}\frac{1}{((n-m)\Delta T)^\alpha}\right)\sup_{m\le n}\vvvert \epsilon_m^{(k)}\vvvert_{q}\\
&+C\left(\Delta T\sum_{m=0}^{n-1}\frac{1}{((n-1-m)\Delta T)^{2\alpha}}\right)\Delta T^{\alpha-\kappa}\sup_{m\le n}\vvvert \epsilon_m^{(k-1)}\vvvert_{q}\\
&+\Delta T^{1-\alpha}\vvvert \epsilon_{n}^{(k-1)}\vvvert_{q}\\
&\le C\Delta T^{(k+1)(\alpha-\kappa)}+C\Delta T^{1-\alpha+k(\alpha-\kappa)},
\end{align*}
owing to Theorem~\ref{theo:error-exp1}. With the assumptions $\alpha\le \frac12$, it is straightforward to check that $1-\alpha+k(\alpha-\kappa)\ge (k+1)(\alpha-\kappa)$. This concludes Step~$1$.

{\bf Step $2$.} It remains to establish the error estimate of Theorem~\ref{theo:error-exp2}.

Using the expression of the error~\eqref{eq:error_exp}, Proposition~\ref{propo:A1}, the two statements in Lemma~\ref{lm:LipR}, with $\beta=\alpha-\kappa$, and the moment bounds from Proposition~\ref{propo:bound_num}, one obtains
\begin{align*}
\vvvert \epsilon_n^{(k)}\vvvert_{q,\alpha}
\le& \Delta T\sum_{m=0}^{n-1}\|(-A)^{\alpha}e^{(n-m)\Delta T A}\|_{\mathcal{L}(H)}\vvvert \epsilon_m^{(k)}\vvvert_{q}\\ 
&+\sum_{m=0}^{n-2}\|(-A)^{2\alpha}e^{(n-1-m)\Delta T A}\|_{\mathcal{L}(H)}\vvvert (-A)^{-\alpha}\bigl[\RR_m(u_m^{(k-1)})-\RR_m(u_m^{\rm ref})\bigr]\vvvert_{q}\\
&+\vvvert \RR_{n-1}(u_{n-1}^{(k-1)})-\RR_{n-1}(u_{n-1}^{\rm ref})\vvvert_{q,\alpha}\\
\le& C\Delta T\sum_{m=0}^{n-1}\frac{1}{((n-m)\Delta T)^\alpha}\vvvert \epsilon_m^{(k)}\vvvert_{q,\alpha}\\
&+C\Delta T^{\min(2\alpha,\frac12)-2\kappa}\left(\Delta T\sum_{m=0}^{n-2}\frac{1}{((n-1-m)\Delta T)^{2\alpha}}\right)\sup_{m\le n-2}\vvvert \epsilon_m^{(k-1)}\vvvert_{2q,\alpha}\\
&+C\Delta T^{1-\alpha}\vvvert\epsilon_{n-1}^{(k-1)}\vvvert_{q}\\
\le& C\Delta T\sum_{m=0}^{n-1}\frac{1}{((n-m)\Delta T)^\alpha}\vvvert \epsilon_m^{(k)}\vvvert_{q,\alpha}+C\Delta T^{\min(2\alpha,\frac12)-2\kappa}\sup_{m\Delta T\le T}\vvvert \epsilon_m^{(k-1)}\vvvert_{2q,\alpha},
\end{align*}
using the fact that $1-\alpha\ge \frac12\ge \min(2\alpha,\frac12)-2\kappa$.

Applying the discrete Gronwall lemma and a recursion argument,
\begin{align*}
\vvvert \epsilon_n^{(k)}\vvvert_{q,\alpha}&\le C\Delta T^{(k-1)(\min(2\alpha,\frac12)-\kappa)}\vvvert \epsilon_n^{(1)}\vvvert_{2^kq,\alpha}\\
&\le C\Delta T^{(k-1)(\min(2\alpha,\frac12)-\kappa)+2\alpha},
\end{align*}
owing to Step~$1$ with $k=1$.

This concludes the proof of Theorem~\ref{theo:error-exp2}.
\end{proof}

\begin{rem}
The claim in Step $1$ in the proof of Theorem~\ref{theo:error-exp2} requires $k\ge 1$, and is not correct when $k=0$. In fact, when $k\ge 1$, in the expression of the error, contributions of stochastic terms, which would have low regularity properties, do not appear explicitly since noise is additive.

The application of the claim of Step $2$ explains why Theorem~\ref{theo:error-exp2} holds true only if $k\ge 2$.
\end{rem}

\subsection{Proof of Lemma~\ref{lm:LipR}}\label{sec:5-proof-aux}

The proof of Lemma\ref{lm:LipR} consists in proving bounds on the derivative $D\RR_n(u).h=D\F_n(u).h-DG_n(u).h$ (recall the residual operator $\RR_n$ is defined by~\eqref{eq:residual_def}). Let us introduce notation which is used below. On the one hand, due to~\eqref{eq:coarse}, it is straightforward to compute
\[
D\G_n(u).h=e^{\Delta TA}h+\Delta Te^{\Delta TA}DF(u).h.
\]
On the other hand, the derivative of $\F_n$ is computed as follows: one has $D\F_n(u).h=\eta_{n,J}^h$, where $J\delta t=\Delta T$, and one has the following recursion formulae
\begin{align*}
v_{n,j+1}&=e^{\delta tA}v_{n,j}+\delta te^{\delta tA}F(v_{n,j})+e^{\delta tA}\delta_{n,j}W^Q~,\quad v_{n,0}=u,\\
\eta_{n,j+1}^h&=e^{\delta tA}\eta_{n,j}^h+\delta te^{\delta tA}DF(v_{n,j}).\eta_{n,j}^h~,\quad \eta_{n,0}^h=h.
\end{align*}
Then a straightforward computation yields
\[
\eta_{n,J}^h=e^{\Delta TA}h+\delta t\sum_{j=0}^{J-1}e^{(J-j)\delta tA}DF(v_{n,j}).\eta_{n,j}^h.
\]

Observe that the same term $e^{\Delta TA}h$ appears in expressions of the derivatives $D\F_n(u).h$ and $D\G_n(u).h$. This term thus does not appear in $D\RR_n(u)$. This is a crucial property (which is not satisfied when the coarse integrator is the linear implicit Euler scheme). Observe also that $\RR_n=0$ if $F=0$, since the linear part is solved exactly.

\begin{proof}[Proof of Lemma~\ref{lm:LipR}--$({\rm i})$]
Since $F$ is globally Lipschitz continuous, applying the discrete Gronwall Lemma yields the following (almost sure) inequality:
\[
\underset{0\le j\le J}{\sup}|\eta_{n,j}^h|\le C|h|.
\]
Owing to Proposition~\ref{propo:A1} and using the global Lipschitz continuity property of $F$, it is straightforward to check that
\begin{align*}
|D\RR_n(u).h|_\alpha=&\big|D\F_n(u).h-D\G_n(u).h\big|_\alpha\\
\le& |\Delta Te^{\Delta TA}DF(u).h|_\alpha+\delta t\sum_{j=0}^{J-1}\big|e^{(J-j)\delta tA}DF(v_{n,j}).\eta_{n,j}^h\big|_\alpha\\
\le&\Delta T\|(-A)^\alpha e^{\Delta TA}\|_{\mathcal{L}(H)}|h|+\delta t\sum_{j=0}^{J-1}\|(-A)^\alpha e^{(J-j)\delta tA}\|_{\mathcal{L}(H)}|\eta_{n,j}^h|\\
\le& C_\alpha\Delta T^{1-\alpha}|h|,
\end{align*}
using the inequality $\delta t\sum_{j=0}^{J-1}\frac{1}{(J-j)\delta t)^\alpha}\le C_\alpha \Delta T^{1-\alpha}$. This concludes the proof.
\end{proof}

Observe that in the proof above, only the Lipschitz continuity property for $F$ was employed. In the proof below, the estimates stated in Assumption~\ref{ass:F} are crucial.

\begin{proof}[Proof of Lemma~\ref{lm:LipR}--$({\rm ii})$]
Using the notation above, and expressions of $D\F_n(u).h$ and of $D\G_n(u).h$, the derivative $D\RR_n(u).h=D\F_n(u).h-D\G_n(u).h$ is decomposed as follows: 
\begin{align*}
D\RR_n(u).h&=\Bigl(\sum_{j=0}^{J-1}\delta te^{(J-j)\delta tA}-\Delta Te^{\Delta TA}\Bigr)DF(u).h+\delta t\sum_{j=0}^{J-1}e^{(J-j)\delta tA}DF(u).\bigl(\eta_{n,j}^h-h\bigr)\\
&~+\delta t\sum_{j=0}^{J-1}e^{(J-j)\delta tA}\bigl(DF(v_{n,j})-DF(u)\bigr).\eta_{n,j}^h.
\end{align*}
The first term is treated as follows: using Assumption~\ref{ass:F},
\begin{align*}
\Big|(-A)^{-\alpha}&\Bigl(\sum_{j=0}^{J-1}\delta te^{(J-j)\delta tA}-\Delta Te^{\Delta TA}\Bigr)DF(u).h\Big|\\
&\le C\delta t\sum_{j=0}^{J-1}\|(-A)^{-\alpha-\beta+\kappa}\bigl(e^{J\delta tA}-e^{(J-j)\delta tA}\bigr)\|_{\mathcal{L}(H)}|(-A)^{\beta-\kappa}DF(u).h|\\
&\le C_{\alpha,\beta,\kappa} \delta t\sum_{j=0}^{J-1}(j\delta t)^{\alpha+\beta-\kappa}(1+|u|_{\beta})|h|_\beta\\
&\le C_{\alpha,\beta,\kappa}\Delta T^{1+\alpha+\beta-\kappa}(1+|u|_{\alpha})|h|_\beta,
\end{align*}
since $j\delta t\le J\delta t=\Delta T$, and $\alpha+\beta\le 2\alpha\le 1$.

The second term is treated as follows: observe that $\eta_{n,0}^h=h$, thus, using Assumption~\ref{ass:F},
\begin{align*}
\Big|(-A)^{-\alpha+\kappa}&\delta t\sum_{j=0}^{J-1}e^{(J-j)\delta tA}DF(u).\bigl(\eta_{n,j}^h-h\bigr)\Big|\le C_{\alpha,\kappa}\delta t\sum_{j=0}^{J-1}(1+|u|_{\alpha})\big|(-A)^{-\alpha+\kappa}\bigl(\eta_{n,j}^h-\eta_{n,0}^h\bigr)\big|.
\end{align*}
Note that the increment $\eta_{n,j}^h-\eta_{n,0}^h$ satisfies
\[
\eta_{n,j}^h-\eta_{n,0}^h=\bigl(e^{j\delta tA}-I\bigr)h+\delta t\sum_{\ell=0}^{j-1}e^{(j-\ell)\delta tA}D(v_{n,\ell}).\eta_{n,\ell}^h,
\]
with the inequality
\[
\big|(-A)^{-\alpha+\kappa}\bigl(e^{j\delta tA}-I\bigr)h\big|\le C_{\alpha,\beta,\kappa} (j\delta t)^{\alpha+\beta-\kappa}|h|_\beta.
\]
Using the inequality $|\eta_{n,j}^h|\le C|h|$, and using that $\kappa$ is chosen such that $\alpha+\beta+\kappa<1$, one obtains
\[
\Big|(-A)^{-\alpha}\delta t\sum_{j=0}^{J-1}e^{(J-j)\delta tA}DF(u).\bigl(\eta_{n,j}^h-h\bigr)\Big|\le C_{\alpha,\beta,\kappa}\Delta T^{1+\alpha+\beta-\kappa}(1+|u|_{\alpha})|h|_\beta.
\]
It remains to treat the third term: using $\beta+\kappa\le \alpha$, the last inequality from Assumption~\ref{ass:F} yields
\begin{align*}
\Big|(-A)^{-\alpha}&\delta t\sum_{j=0}^{J-1}e^{(J-j)\delta tA}\bigl(DF(v_{n,j})-DF(u)\bigr).\eta_{n,j}^h\Big|\\
&\le C_{\alpha,\beta,\kappa}\delta t\sum_{j=0}^{J-1}\Big|(-A)^{-\beta-\kappa}\bigl(DF(v_{n,j})-DF(v_{n,0})\bigr).\eta_{n,j}^h\Big|\\
&\le C_{\alpha,\beta,\kappa}\delta t\sum_{j=0}^{J-1}\bigl(1+|v_{n,0}|_\beta+|v_{n,j}|_\beta\bigr)|\eta_{n,j}^h|_\beta|v_{n,j}-v_{n,0}|_{-\beta-\kappa},
\end{align*}
with $|\eta_{n,j}^h|_\beta\le C|h|_\beta$. Note that the following moment bound holds true: there exists $C_{q,\beta}\in(0,\infty)$ such that $\vvvert v_{n,j}\vvvert_{q,\beta}\le C_{q,\beta}$. Moreover, the proof of the following regularity result is straightforward:
\[
\vvvert (-A)^{-\beta-\kappa}(v_{n,j}-v_{n,0})\vvvert_q\le C_{q,\alpha,\beta,\kappa}(j\delta t)^{\min\bigl(\alpha+\beta+\kappa,\frac12\bigr)}(1+|u|_\alpha),
\]
see Proposition~\ref{propo:bound_u} for a similar estimate when $\beta=0$, for the process $\bigl(u(t)\bigr)_{t\ge 0}$. Then applying H\"older inequality yields
\[
\vvvert (-A)^{-\alpha}\delta t\sum_{j=0}^{J-1}e^{(J-j)\delta tA}\bigl(DF(v_{n,j})-DF(u)\bigr).\eta_{n,j}^h\vvvert_q\le C_{q,\alpha,\beta}\Delta T^{1+\min\bigl(\alpha+\beta+\kappa,\frac12\bigr)}|h|_\beta(1+|u|_{\alpha}^2).
\]
Gathering the estimates then concludes the proof.
\end{proof}

\bibliography{parareal}

\begin{thebibliography}{10}

\bibitem{B05}
G.~Bal.
\newblock On the convergence and the stability of the parareal algorithm to
  solve partial differential equations.
\newblock In {\em Domain decomposition methods in science and engineering},
  volume~40 of {\em Lect. Notes Comput. Sci. Eng.}, pages 425--432. Springer,
  Berlin, 2005.

\bibitem{B06}
G.~Bal.
\newblock {Parallelization in time of (stochastic) ordinary differential
  equations}.
\newblock {\em Preprint}, 2006.

\bibitem{BZ89}
A.~Bellen and M.~Zennaro.
\newblock Parallel algorithms for initial value problems for difference and
  differential equations.
\newblock {\em J. Comput. Appl. Math.}, 25(3):341--350, 1989.

\bibitem{CP93}
P.~Chartier and B.~Philippe.
\newblock A parallel shooting technique for solving dissipative {ODE}s.
\newblock {\em Computing}, 51(3-4):209--236, 1993.

\bibitem{DaPrato_Zabczyk:14}
G.~Da~Prato and J.~Zabczyk.
\newblock {\em Stochastic equations in infinite dimensions}, volume 152 of {\em
  Encyclopedia of Mathematics and its Applications}.
\newblock Cambridge University Press, Cambridge, second edition, 2014.

\bibitem{G}
M.~J. Gander.
\newblock 50 years of time parallel time integration.
\newblock In {\em Multiple shooting and time domain decomposition methods},
  volume~9 of {\em Contrib. Math. Comput. Sci.}, pages 69--113. Springer, Cham,
  2015.

\bibitem{GH08}
M.~J. Gander and E.~Hairer.
\newblock Nonlinear convergence analysis for the parareal algorithm.
\newblock In {\em Domain decomposition methods in science and engineering
  {XVII}}, volume~60 of {\em Lect. Notes Comput. Sci. Eng.}, pages 45--56.
  Springer, Berlin, 2008.

\bibitem{GH14}
M.~J. Gander and E.~Hairer.
\newblock Analysis for parareal algorithms applied to {H}amiltonian
  differential equations.
\newblock {\em J. Comput. Appl. Math.}, 259(part A):2--13, 2014.

\bibitem{GV07}
M.~J. Gander and S.~Vandewalle.
\newblock Analysis of the parareal time-parallel time-integration method.
\newblock {\em SIAM J. Sci. Comput.}, 29(2):556--578, 2007.

\bibitem{GV07-2}
M.~J. Gander and S.~Vandewalle.
\newblock On the superlinear and linear convergence of the parareal algorithm.
\newblock In {\em Domain decomposition methods in science and engineering
  {XVI}}, volume~55 of {\em Lect. Notes Comput. Sci. Eng.}, pages 291--298.
  Springer, Berlin, 2007.

\bibitem{HWZ}
J~Hong, X.~Wang, and L.~Zhang.
\newblock {Parareal exponential $\theta$-scheme for longtime simulation of
  stochastic Schr\"odinger equations with weak damping}.
\newblock {\em arXiv:1803.09188}.

\bibitem{HV95}
G.~Horton and S.~Vandewalle.
\newblock A space-time multigrid method for parabolic partial differential
  equations.
\newblock {\em SIAM J. Sci. Comput.}, 16(4):848--864, 1995.

\bibitem{Jentzen_Kloeden:11}
A.~Jentzen and P.~E. Kloeden.
\newblock {\em Taylor approximations for stochastic partial differential
  equations}, volume~83 of {\em CBMS-NSF Regional Conference Series in Applied
  Mathematics}.
\newblock Society for Industrial and Applied Mathematics (SIAM), Philadelphia,
  PA, 2011.

\bibitem{K94}
M.~Kiehl.
\newblock Parallel multiple shooting for the solution of initial value
  problems.
\newblock {\em Parallel Comput.}, 20(3):275--295, 1994.

\bibitem{Kruse:14}
R.~Kruse.
\newblock {\em Strong and weak approximation of semilinear stochastic evolution
  equations}, volume 2093 of {\em Lecture Notes in Mathematics}.
\newblock Springer, Cham, 2014.

\bibitem{LMT01}
J-L. Lions, Y.~Maday, and G.~Turinici.
\newblock R\'esolution d'{EDP} par un sch\'ema en temps ``parar\'eel''.
\newblock {\em C. R. Acad. Sci. Paris S\'er. I Math.}, 332(7):661--668, 2001.

\bibitem{LR}
W.~Liu and M.~R\"{o}ckner.
\newblock {\em Stochastic partial differential equations: an introduction}.
\newblock Universitext. Springer, Cham, 2015.

\bibitem{Lord_Powell_Shardlow:14}
G.~J. Lord, C.~E. Powell, and T.~Shardlow.
\newblock {\em An introduction to computational stochastic {PDE}s}.
\newblock Cambridge Texts in Applied Mathematics. Cambridge University Press,
  New York, 2014.

\bibitem{LO87}
C.~Lubich and A.~Ostermann.
\newblock Multigrid dynamic iteration for parabolic equations.
\newblock {\em BIT}, 27(2):216--234, 1987.

\bibitem{MT02}
Y.~Maday and G.~Turinici.
\newblock A parareal in time procedure for the control of partial differential
  equations.
\newblock {\em C. R. Math. Acad. Sci. Paris}, 335(4):387--392, 2002.

\bibitem{MT}
Y.~Maday and G.~Turinici.
\newblock The parareal in time iterative solver: a further direction to
  parallel implementation.
\newblock In {\em Domain decomposition methods in science and engineering},
  volume~40 of {\em Lect. Notes Comput. Sci. Eng.}, pages 441--448. Springer,
  Berlin, 2005.

\bibitem{ML67}
W.~L. Miranker and W.~Liniger.
\newblock Parallel methods for the numerical integration of ordinary
  differential equations.
\newblock {\em Math. Comp.}, 21:303--320, 1967.

\bibitem{N64}
J.~Nievergelt.
\newblock Parallel methods for integrating ordinary differential equations.
\newblock {\em Comm. ACM}, 7:731--733, 1964.

\bibitem{ZZJ}
L.~Zhang, W.~Zhou, and L.~Ji.
\newblock Parareal algorithms applied to stochastic differential equations with
  conserved quantities.
\newblock {\em J. Comput. Math.}, 37(1):48--60, 2019.

\end{thebibliography}
\bibliographystyle{plain}

\end{document}